\DeclareMathOperator{\Hom}{\mathsf{Hom}}
\DeclareMathOperator{\fib}{\mathsf{fib}}
\DeclareMathOperator{\Nb}{\mathbb{N}}
\DeclareMathOperator{\Sh}{Sh}
\DeclareMathOperator{\Grpd}{\infty-\mathsf{Gpd}}
\DeclareMathOperator{\Fb}{\mathbb{F}}
\newcommand{\Cc}{\mathcal{C}}
\newcommand{\Ec}{\mathcal{E}}
\DeclareMathOperator{\colim}{\mathsf{colim}}
\renewcommand{\lim}{\mathsf{lim}}
\DeclareMathOperator{\Sets}{\mathsf{Set}}
\DeclareMathOperator{\Rings}{\mathsf{Rng}}
\DeclareMathOperator{\Perf}{\mathsf{Perf}}
\DeclareMathOperator{\Pb}{\mathbb{P}}
\newcommand{\C}{\mathsf{C}}
\newcommand{\Spectra}{Sp}
\DeclareMathOperator{\rk}{\mathsf{rk}}
\DeclareMathOperator{\id}{id}
\DeclareMathOperator{\FMod}{\mathsf{FMod}}
\DeclareMathOperator{\Gpd}{\mathsf{Grpd}}
\DeclareMathOperator{\Loc}{\mathsf{Loc}}
\DeclareMathOperator{\op}{\mathsf{op}}
\DeclareMathOperator{\Ab}{\mathbb{A}}
\DeclareMathOperator{\Coh}{\mathsf{Coh}}
\DeclareMathOperator{\Kb}{\mathbb{K}}
\DeclareMathOperator{\Map}{\mathsf{Map}}
\DeclareMathOperator{\G}{\mathbb{G}}
\DeclareMathOperator{\Ee}{\mathcal{E}}
\DeclareMathOperator{\Sch}{\mathsf{Sch}}
\DeclareMathOperator{\Pc}{\mathcal{P}}
\DeclareMathOperator{\Spec}{Spec}
\DeclareMathOperator{\Sym}{Sym}
\DeclareMathOperator{\Oo}{\mathcal{O}}
\DeclareMathOperator{\red}{red}
\DeclareMathOperator{\Pic}{\mathsf{Pic}}
\DeclareMathOperator{\Zb}{\mathbb{Z}}
\DeclareMathOperator{\coh}{coh}
\DeclareMathOperator{\gr}{gr}
\DeclareMathOperator{\grdet}{\mathsf{det}^{\Zb}}
\DeclareMathOperator{\grPic}{\mathsf{Pic}^{\Zb}}
\DeclareMathOperator{\DR}{\mathsf{DR}}
\DeclareMathOperator{\Ac}{\mathcal{A}}
\DeclareMathOperator{\eqgr}{\widetilde{\gr}}
\DeclareMathOperator{\cri}{\mathsf{cri}}
\DeclareMathOperator{\bfOmega}{\mathbf{\Omega}}
\DeclareMathOperator{\Adm}{\mathsf{Adm}}
\begin{document}
\newtheorem{definition}{Definition}[section]
\newtheorem{theorem}[definition]{Theorem}
\newtheorem{proposition}[definition]{Proposition}
\newtheorem{corollary}[definition]{Corollary}
\newtheorem{conj}[definition]{Conjecture}
\newtheorem{lemma}[definition]{Lemma}
\newtheorem{rmk}[definition]{Remark}
\newtheorem{cl}[definition]{Claim}
\newtheorem{example}[definition]{Example} 
\newtheorem{claim}[definition]{Claim}
\newtheorem{ass}[definition]{Assumption}
\newtheorem{warning}[definition]{Dangerous Bend}
\newtheorem{porism}[definition]{Porism}
\newtheorem{situation}[definition]{Situation}
\newtheorem{lemma-definition}[definition]{Lemma-Definition}
\newtheorem{construction}[definition]{Construction}
\newtheorem{convention}[definition]{Convention}
\newtheorem{question}[definition]{Question}

\author{Michael Groechenig}
\date{}

\title{Higher de Rham epsilon factors\let\thefootnote\relax\footnotetext{This project has received funding from the European Union's Horizon 2020 research and innovation programme under the Marie Sk\l odowska-Curie Grant Agreement No. 701679. \\ \includegraphics[height = 1cm,right]{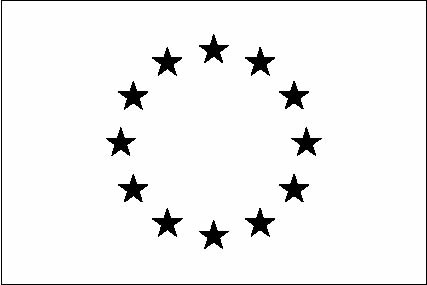}
} 
}
\maketitle 

\abstract{This article is devoted to the study of a higher-dimensional generalisation of de Rham epsilon lines. To a holonomic $D$-module $M$ on a smooth variety $X$ and a generic tuple of $1$-form $(\nu_1,\dots,\nu_n)$, we associate a point of the $K$-theory space $K(X,Z)$. If $X$ is proper this $K$-theory class is related to the de Rham cohomology $R\Gamma_{dR}(X,M)$.  The novel feature of our construction is that $Z$ is allowed to be of dimension $0$. Furthermore, we allow the tuple of $1$-forms to vary in families, and observe that this leads naturally to a crystal akin to the epsilon connection for curves. Our approach is based on combining a construction of Patel with a homotopy invariance property of algebraic $K$-theory with respect to $(\Pb^1,\infty)$. This homotopical viewpoint leads us naturally to the definition of an epsilon connection in higher dimensions. Along the way we prove the compatibility of Patel's epsilon factors with the graded lines defined by Deligne and Beilinson--Bloch--Esnault in the case of curves.}

\tableofcontents
 
\section{Introduction}

Let $\mathcal{F}$ be a constructible $\ell$-adic sheaf on a smooth proper curve $X/\Fb_q$. 
Recall that to $\mathcal{F}$ one associates an $L$-function $L(\mathcal{F},t)$ which is a rational function in $t$ (with $\ell$-adic coefficients). Poincar\'e duality implies the functional equation
$L(\mathcal{F},\frac{1}{qt}) = \varepsilon(\mathcal{F})\cdot L(\mathcal{F}^{\vee},t).$
The constant $\varepsilon(\mathcal{F})$ arising in this functional equation is described in terms of the Frobenius eigenvalue on the determinant of cohomology
$\varepsilon(\mathcal{F}) = \mathsf{tr}(-F,\det(H^*(X,\mathcal{F}))).$
We also refer to it as the \emph{global epsilon factor}. 

A formalism of \emph{local epsilon factors} allows one to associate to a non-zero rational $1$-form $\nu$ on $X$ and a closed point $x \in X$ an (invertible) $\ell$-adic number $\varepsilon_{\nu,x}(\mathcal{F})$ which is equal to $1$ for almost all closed points $x$, such that we have a \emph{product formula}
\begin{equation}\label{productformula}\varepsilon(\mathcal{F}) = \prod_{x \in X_{cl}} \varepsilon_{\nu,x}(\mathcal{F}).\end{equation}
Furthermore, the quantity $ \varepsilon_{\nu,x}(\mathcal{F})$ is only to depend on the constructible sheaf $\mathcal{F}$ (and the form $\nu$) near $x$, that is, the restriction to the formal completion of $X$ at $x$.

The existence of a formalism of local epsilon factors was proven by Laumon \cite{MR908218} using global methods. A purely local definition of $\varepsilon_{\nu,x}(\mathcal{F})$ remains elusive and would be desirable.
Replacing finite fields by the complex numbers $\mathbb{C}$ (or a field of characteristic zero) and $\ell$-adic sheaves by holonomic $D$-modules, the determinant line of de Rham cohomology remains an interesting invariant (e.g. due to its connection to period matrices). It's been observed by Deligne in his farewell seminar at IHES, and Beilinson--Bloch--Esnault \cite{MR1988970} that one can define graded lines (that is, a $1$-dimensional graded vector spaces $\Ee_{\nu,x}(\mathcal{F})$ for a holonomic $D$-module $\mathcal{F}$ and a non-zero rational $1$-form on $X$, such that we have the following analogue of formula \eqref{productformula}:
\begin{equation}\label{eqn:BBE}\det(X,\mathcal{F}) = \bigotimes_{i=0}^2 \left({\bigwedge}^{top} H^i(X,\mathcal{F})\right)^{(-1)^i} \simeq \bigotimes_{x \in X_{cl}} \Ee_{\nu,x}(\mathcal{F}).\end{equation}
We recall this construction in \ref{BBE}.
Their construction for de Rham cohomology has a Betti counterpart, and hence lends itself to define \emph{epsilon periods} (see \cite{beilinson2005periods} for a construction of epsilon periods using Fourier transform, and Beilinson's \cite{MR2330165, MR2562452}).

A generalisation of de Rham epsilon lines to higher dimensions was developed by Patel in \cite{MR2981817}, see also \ref{Patel} for a summary of his theory. Let $X$ be a smooth variety over a field $k$ of characteristic $0$, $U \subset X$ an open subset, and $\nu \in \Omega_X^1(U)$. We denote by $K(D_X,{\nu})$ the $K$-theory spectrum of locally finitely presented $D$-modules $M$ on $X$, such that $\nu(U)$ does not intersect the singular support of $M$. And by $K(X,D)$ the $K$-theory spectrum of coherent sheaves on $X$ which are set-theoretically supported on $D = X \setminus U$.

\begin{theorem}[Patel]
There exists a morphism of spectra $\Ee^{P}_{\nu}\colon \Kb(D_X,\nu) \to \Kb(X,D)$,  such that for $X$ proper, we have a commutative diagram
\[
\xymatrix{
K(D_X,{\nu}) \ar[r]^{\Ec_{\nu}^P} \ar[rd]_{R\Gamma_{dR}} & K(X,D) \ar[d]^{R\Gamma} \\
& K(k).
}
\]
\end{theorem}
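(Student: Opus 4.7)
My plan is to construct $\mathcal{E}^P_\nu$ in three stages: first as an assignment on objects via a microlocal construction, then as a map of spectra via an additivity argument, and finally verify the commutative diagram by a homotopy-invariance argument.

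\textbf{Construction on objects.} For a locally finitely presented $D_X$-module $M$ with $\nu(U)$ disjoint from the singular support of $M$ over $U$, I would produce a perfect complex $C_\nu(M)$ of coherent $\mathcal{O}_X$-modules whose cohomology sheaves are set-theoretically supported on $D$. The idea follows the one-dimensional picture of Beilinson--Bloch--Esnault: a good filtration $F_\bullet M$ yields $\mathrm{gr}^F M$ coherent on $T^*X$ with support containing the singular support; the $1$-form $\nu$ provides a section $U \to T^*U$ that avoids this support, which is a microlocal transversality condition. Combining $\nu$ with the de Rham complex of $M$ via a Koszul-type construction produces $C_\nu(M)$, which coincides locally with a comparison of the ``canonical'' and ``$\nu$-twisted'' lattices in the classical picture. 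Set $\mathcal{E}^P_\nu(M) := [C_\nu(M)] \in K(X, D)$.

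\textbf{Promotion to a map of spectra.} A short exact sequence $0 \to M' \to M \to M'' \to 0$ of $D$-modules, each satisfying the non-characteristic condition with respect to $\nu$ (a property preserved by subquotients, by standard behaviour of singular support under extensions), should yield a short exact sequence of perfect complexes $0 \to C_\nu(M') \to C_\nu(M) \to C_\nu(M'') \to 0$. Waldhausen's additivity theorem, applied to a suitable exact/Waldhausen model of $K(D_X,\nu)$, then lifts the object-level assignment to a morphism of spectra $\mathcal{E}^P_\nu \colon K(D_X,\nu) \to K(X,D)$.

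\textbf{Verifying the diagram.} For $X$ proper, I would interpolate using the family $\{C_{t\nu}(M)\}_{t \in \mathbb{A}^1}$: this degenerates at $t = 0$ to the untwisted de Rham complex $\mathrm{DR}(M)$, whose global cohomology is $R\Gamma_{dR}(X,M)$, and at $t = 1$ gives $C_\nu(M)$. The family assembles into a single perfect complex on $X \times \mathbb{A}^1$. Using properness of $X$, pushforward yields a perfect complex on $\mathbb{A}^1$, and the $\mathbb{A}^1$-homotopy invariance of $K(k)$ identifies the fibres at $t = 0$ and $t = 1$ as classes in $K(k)$, giving the required commutativity.

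\textbf{Main obstacle.} The microlocal part of the construction is the delicate step: showing that $C_\nu(M)$ is genuinely a \emph{perfect} complex of coherent $\mathcal{O}_X$-modules whose cohomology is \emph{supported on $D$}, and that it depends on $M$ functorially at the level of complexes (not only up to quasi-isomorphism), so that additivity on short exact sequences holds. This requires a careful interaction between the choice of good filtration (which is not canonical) and the Koszul construction, verifying that the Koszul differential becomes invertible precisely in the microlocal region where $\nu$ avoids the support of $\mathrm{gr}^F M$.
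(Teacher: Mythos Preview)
Your proposal has a genuine gap in the first step, and it is precisely the obstacle you flag at the end. The ``Koszul-type construction'' of a perfect complex $C_\nu(M)$ on $X$ is not made precise, and in fact there is no reason to expect such a complex to exist functorially in $M$. A good filtration $F_\bullet M$ produces $\mathrm{gr}^F M$ as a coherent sheaf on $T^*X$; any Koszul construction with the section $\nu$ naturally lives on $T^*X$ (or on $U$ via pullback along $\nu$), not on all of $X$. Passing from $T^*X$ back to $X$ requires inverting the pullback $\pi^*$ along the projection $\pi\colon T^*X \to X$, and this inverse exists only at the level of $K$-theory spectra (by $\mathbb{A}^1$-invariance for regular schemes), not at the level of complexes. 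Moreover, different good filtrations yield non-isomorphic associated gradeds, so there is no canonical $C_\nu(M)$; the well-definedness holds only after passing to $K$-theory classes. Your additivity step therefore has nothing to act on.

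Patel's construction, as summarised in the paper, sidesteps exactly this by never attempting to build a complex on $X$. Instead one works entirely with spectra: first a refinement $Q_S\colon \mathbb{K}(D_X,S) \to \mathbb{K}(T^*X,S)$ of Quillen's map is produced using filtered and \emph{bifiltered} $D$-modules (the bifiltered trick is what resolves the good-filtration ambiguity, via a cocartesian square of $K$-theory spectra). Then one uses the commutative square
\[
\xymatrix{
\mathbb{K}(T^*X) \ar[r] \ar[d]_{(\pi^*)^{-1}} & \mathbb{K}(T^*U) \ar[d]^{\nu^*} \\
\mathbb{K}(X) \ar[r] & \mathbb{K}(U)
}
\]
and takes the induced map on fibres to get $\phi_\nu\colon \mathbb{K}(T^*X,S) \to \mathbb{K}(X,D)$; the epsilon factor is the composite $[-n]\circ\phi_\nu\circ Q_S$. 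Your $\mathbb{A}^1$-family idea for the product formula is close in spirit to what actually happens (the zero section and $\nu$ are $\mathbb{A}^1$-homotopic as sections of $T^*X$), but it is applied at the level of $K$-theory of $T^*X$ rather than to a hypothetical family of complexes on $X$.
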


Patel's epsilon factors take values in the $K$-theory spectrum $K(X,D)$. If $D$ is proper, we can apply $R\Gamma(X,-)$ and obtain a perfect complex of vector spaces. The graded determinant thereof will be denoted by 
$\varepsilon^P_{\nu}(M)$
and referred to as Patel's epsilon line. While this is a pleasant generalisation of the theory for curves to higher dimensions, it would be desirable to find the higher-dimensional counterparts of the following features of Beilinson--Bloch--Esnault's theory in dimension $1$:
\item[(a)] Is there a refinement of Patel's epsilon factors which decomposes as a product $\bigotimes_{x \in X_{\rm closed}} \varepsilon_{\nu,x}(M)$ similar to equation \eqref{eqn:BBE}?
\item[(b)] Can we vary $\nu$ in families with respect to a commutative $k$-algebra $A$, and is there an analogue of the epsilon connection of \cite{MR1988970}?
\item[(c)] Is Patel's epsilon factor isomorphic to the one defined for curves by Deligne and Beilinson--Bloch--Esnault?

In this paper we give an affirmative answer to these questions. We fix a commutative $k$-algebra $A$ (where $k$ denotes again a field of characteristic $0$). Let $Z \subset X$ be a proper subset (possibly $0$-dimensional), and $X \setminus Z = \bigcup_{i = 1}^m U_i$ an open covering. Furthermore, we choose for every $i \in \{1,\dots, m\}$ a section $\nu_i$ of $\Omega^1_{(U_i)_A/A}$. We refer to the tuple $(\nu_1,\dots,\nu_m)$ by $\underline{\nu}$, and denote by $K(D_X,\underline{\nu})$ the $K$-theory of the full subcategory of holonomic $D$-modules $M$, such that for every $\emptyset \neq I \subset \{1,\dots,m\}$ the singular support of $M$ does not intersect the affine space $\sum_{i\in I}\lambda_i \nu_i$ with $\sum_{i\in I} \lambda_i = 1$. The following result is proven at the end of Subsection \ref{sub:relative}.

\begin{theorem}\label{thm:mainmain}
There exists a morphism of spectra $\Ee_{\underline{\nu},A}\colon \Kb(D_X,\underline{\nu}) \to \Kb(X_A,Z_A)$,  such that for $X$ proper, we have a commutative diagram
\[
\xymatrix{
K(D_X,{\nu}) \ar[r]^{\Ec_{\underline{\nu},A}} \ar[d]_{R\Gamma_{dR}} & K(X_A,Z_A) \ar[d]^{R\Gamma} \\
K(k) \ar[r]^{\otimes_k A} & K(A).
}
\]
\end{theorem}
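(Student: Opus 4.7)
The plan is to construct $\Ee_{\underline{\nu}, A}$ by applying Patel's theorem to each individual form $\nu_i$ and then gluing the resulting classes via Zariski descent. For each $i \in \{1,\dots,m\}$, the relative version of Patel's construction (over $A$) should yield a morphism
\[
\Ee_{\nu_i, A}^P \colon \Kb(D_X, \underline{\nu}) \to \Kb(X_A, (D_i)_A),
\]
where $D_i := X \setminus U_i$; the subcategory $\Kb(D_X, \underline{\nu})$ embeds into $\Kb(D_X, \nu_i)$ since the hypothesis on convex combinations for the singleton $I = \{i\}$ reproduces Patel's singular support condition. Because $Z = \bigcap_i D_i$, the spectrum $\Kb(X_A, Z_A)$ sits as the fiber of $\Kb(X_A) \to \Kb((X \setminus Z)_A)$, and the target may be resolved via the \v{C}ech totalisation over the Zariski cover $\{(U_i)_A\}$. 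The maps $\Ee_{\nu_i,A}^P$ are then expected to assemble into a compatible system for this descent datum, yielding the desired lift to $\Kb(X_A, Z_A)$.

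The compatibility of the $\Ee_{\nu_i,A}^P$ over overlaps is precisely where the convex-combination hypothesis and $(\Pb^1, \infty)$-homotopy invariance come in. Given non-empty $I \subset \{1,\dots,m\}$, the affine space $\{\sum_{i \in I} \lambda_i \nu_i : \sum_{i \in I} \lambda_i = 1\}$ parametrises $1$-forms on $(U_I)_A$ which, by hypothesis, all avoid the singular support of every $M$ in our subcategory. Running Patel's construction in this family, and compactifying to $(\Pb^1)^{|I|-1}$, should produce a single morphism
\[
\Kb(D_X, \underline{\nu}) \to \Kb\bigl(X_A \times (\Pb^1)^{|I|-1},\, (X \setminus U_I)_A \times (\Pb^1)^{|I|-1}\bigr),
\]
whose specialisations at the $|I|$ vertices recover the individual Patel classes at each $\nu_i$ (pushed forward along $\Kb(X_A, (D_i)_A) \to \Kb(X_A, (D_I)_A)$ with $D_I := \bigcup_{i \in I} D_i$). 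Iterated $(\Pb^1, \infty)$-invariance of algebraic $K$-theory then forces these specialisations to coincide in $\Kb(X_A, (D_I)_A)$, which is exactly the descent compatibility required.

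For the compatibility with de Rham cohomology when $X$ is proper, applying $R\Gamma$ to the constructed class reduces to Patel's original triangle for each $\nu_i$: both composites $R\Gamma \circ \Ee_{\underline{\nu}, A}$ and $(\otimes_k A) \circ R\Gamma_{dR}$ agree after projection along every $\Kb(X_A, Z_A) \to \Kb(X_A, (D_i)_A)$, and hence agree globally by descent. The main technical obstacle will be constructing Patel's $\varepsilon$-factor in an $\Pb^1$-family of $1$-forms over $A$ in a way that is functorial in the parameter and compatible with the closed-support localisation structure on $\Kb(X_A, -)$. Once this family version is in hand, the descent and gluing steps should be formal consequences of Zariski descent for algebraic $K$-theory.
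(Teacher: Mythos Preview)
Your outline is correct and rests on the same two pillars as the paper's construction: Zariski descent for the cover $\{U_i\}$ of $U = X\setminus Z$, and $(\Pb^1,\infty)$-homotopy invariance of $K$-theory to produce the compatibility data on overlaps. The paper organises these ingredients a bit differently. Rather than building each $\Ee^P_{\nu_i,A}$ separately and then gluing in $\Kb(X_A,Z_A)$, it first lifts Patel's $Q_S$ once and for all to a map $\widetilde{Q}_{S,A}\colon \Kb(D_X,S)\to \Kb_{\infty}\bigl((T^*X_A,S_A)\boxtimes \Oo_{\Pb^m_A}(1)\bigr)$, then performs the descent on the cotangent side to produce a single morphism $(\underline{\nu})^*$ (Construction~\ref{construction:bracket_nu}), and only afterwards passes to fibres. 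Your variant is equivalent but hides one point that the paper makes explicit: to glue the individual $\Ee^P_{\nu_i,A}$ into a map to $\Kb(X_A,Z_A)$, the underlying maps to $\Kb(X_A)$ must agree on the nose, not merely up to homotopy; the paper arranges this by using the zero-section pullback $L\nu_0^*$ (in place of the unavailable $(\pi_A^*)^{-1}$ over non-regular $A$) as the common left vertical for every $i$. This is exactly the ``main technical obstacle'' you flag, and the paper's solution is the compactification trick of Proposition~\ref{prop:compactification} together with Lemma-Definition~\ref{lemma-defi:tildeQ_S}. The compatibility with $R\Gamma_{dR}$ for proper $X$ then follows as you suggest, and the paper records it as Theorem~\ref{thm:main_product}.
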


Taking the graded determinant of $R\Gamma \circ \Ee_{\underline{\nu}}$ one obtains a graded line which decomposes naturally into factors 
$$\varepsilon_{\nu}\simeq \bigotimes_{x \in \pi_0(Z)} \varepsilon_{\underline{\nu},x}.$$
If $Z$ is zero-dimensional, this yields the promised factorisation format in higher dimensions. The epsilon connection is introduced in Section \ref{epsilon}. The following is proven in Section \ref{epsilon}, and responds to question (b) above.

\begin{theorem}
The local epsilon line $\varepsilon_{\underline{\nu}}(M) / \Spec A$ of a holonomic $D$-module carries a natural crystal structure. Furthermore, if $X$ is smooth and proper, $\varepsilon_{\underline{\nu}}(M)$ is isomorphic to the constant crystal induced by $\det(R\Gamma_{dR}(X,M)) \otimes_k A$.
\end{theorem}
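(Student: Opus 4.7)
The plan is to combine the functoriality of the construction $\Ee_{\underline{\nu}, A}$ in $(A, \underline{\nu})$ with the $(\Pb^1, \infty)$-homotopy invariance of algebraic $K$-theory highlighted in the abstract. In characteristic zero, a crystal on $\Spec A$ is equivalent to a flat connection on an $A$-module, and may be specified by giving, for every surjection $B \twoheadrightarrow A$ of $k$-algebras with nilpotent kernel $I$, a canonical identification between any two lifts of the given module from $A$ to $B$. Concretely, it suffices to produce, for every such $B$ and every pair of lifts $\underline{\nu}^{(0)}, \underline{\nu}^{(1)}$ of $\underline{\nu}$ to tuples over $X_B$ that remain in the admissible locus, a canonical isomorphism $\varepsilon_{\underline{\nu}^{(0)}}(M) \simeq \varepsilon_{\underline{\nu}^{(1)}}(M)$ of graded lines over $\Spec B$, and to verify the expected cocycle condition.

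The construction of this isomorphism proceeds by straight-line interpolation. Since $\omega := \underline{\nu}^{(1)} - \underline{\nu}^{(0)}$ takes values in $I \cdot \Omega^1$ and $I$ is nilpotent, the $\A^1$-family $\underline{\nu}_t := \underline{\nu}^{(0)} + t\omega$ remains throughout in the admissible singular-support locus, and therefore defines a class in $K(D_{X_{B[t]}}, \underline{\nu}_t)$ refining both endpoints. The nilpotence of $\omega$ is then used to extend this affine family to a $\Pb^1$-family with suitably trivial behavior at $\infty$, and the $(\Pb^1, \infty)$-invariance of algebraic $K$-theory yields that the resulting epsilon classes at $t = 0$ and $t = 1$ coincide in $K(X_B, Z_B)$. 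Taking graded determinants produces the desired canonical isomorphism. The cocycle condition, equivalently flatness of the resulting connection, is then checked by applying the analogous argument to a two-parameter family, invoking the corresponding $(\Pb^1, \infty)^{\wedge 2}$-invariance, which also implies independence of the chosen straight-line path.

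For the second assertion, in the case that $X$ is smooth and proper, I would apply $R\Gamma$ to the commutative diagram of Theorem \ref{thm:mainmain}. The composition becomes $R\Gamma_{dR}(X, M) \otimes_k A$, which manifestly factors through $K(k) \to K(A)$ and is therefore independent of $\underline{\nu}$. Taking graded determinants, one identifies $\varepsilon_{\underline{\nu}}(M) \simeq \det(R\Gamma_{dR}(X, M)) \otimes_k A$; any line obtained by extension of scalars from $k$ to $A$ carries a tautological constant crystal structure, and one checks that this agrees with the one produced above, since by construction the interpolation procedure acts trivially on classes coming from $K(k)$.

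The main obstacle will be the integrability step, equivalently the cocycle condition for the crystal structure. This requires a clean formulation of how the nilpotence of the deformation parameter permits extending the interpolating family from $\A^1$ to $\Pb^1$ in a way compatible with the singular-support restrictions on Patel's construction, together with a careful two-parameter analysis showing that the two natural paths between diagonally placed lifts produce the same isomorphism. The existence of the infinitesimal connection is fairly direct from $(\Pb^1, \infty)$-invariance applied to the straight-line family; flatness and the cocycle condition are the genuinely delicate points.
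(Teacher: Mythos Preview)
Your proposal is essentially correct and follows the same route as the paper, though the packaging differs. The paper encodes the straight-line interpolation inside the machinery of $S$-admissible collections $\Adm_{S,A}(U)$ and the relation $<$ (Definition~\ref{defi:homotopy}): one shows that if $\underline{\nu}^{(0)}$ and $\underline{\nu}^{(1)}$ agree over $A^{\red}$ then their \emph{union} is again $S$-admissible, which is exactly your observation that the affine combinations $\sum \lambda_i \nu_i^{(0)} + \sum \mu_j \nu_j^{(1)}$ reduce modulo nilpotents to admissible combinations of a single $\underline{\nu}$. The isomorphism and cocycle condition then drop out of Proposition~\ref{prop:homotopy}(a)--(b), whose homotopies and tetrahedra come from the contractibility statement of Construction~\ref{const:homotopy}; this is precisely your one- and two-parameter interpolation argument, stated once and for all. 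The proper case is handled exactly as you say, via Theorem~\ref{thm:main_product}.

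One point of imprecision worth flagging: nilpotence of $\omega$ is \emph{not} what allows the extension of the $\Ab^1$-family to $\Pb^1$. The family $t \mapsto \underline{\nu}^{(0)} + t\omega$ does not extend over $t = \infty$ as a family of $1$-forms even when $\omega$ is nilpotent. In the paper the $\Pb^N$-structure is already built into the target via the twist $T^*X_A \boxtimes \Oo_{\Pb^m_A}(1)$ (see Construction~\ref{construction:bracket_nu} and Proposition~\ref{prop:Phomotopy}), so that any tuple of sections automatically yields the required coherent system of homotopies. Nilpotence enters only at the admissibility check --- ensuring the interpolating simplex avoids $S_A$ --- which is exactly the step you identify. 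With that adjustment, your outline matches the paper's argument.
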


The solution to (a) and (b) is based on the same principle: the invariance property of algebraic $K$-theory along Zariski-locally trivial $\Pb^N$-fibrations with a modulus condition at $\infty$ (in the case of regular base schemes this is simply $\Ab^1$-invariance of $K$-theory).
The comparison between Patel's and Beilinson--Bloch--Esnault's theory is the content of Theorem \ref{thm:comparison}.

\medskip

\noindent\textit{Categorical conventions.} We work in the framework of $\infty$-categories (mostly stable $\infty$-categories) as discussed in Lurie's \cite{Lurie:ha}. It should be possible for readers without prior exposure to the theory of $\infty$-categories to follow the outline of our construction.

\medskip

\noindent\textit{Acknowledgements.}  The author thanks Oliver Braunling, H\'el\`ene Esnault, Javier Fres\'an, Deepam Patel, Simon P\'epin-Lehalleur, Kay R\"ulling, Markus Roeser and Jesse Wolfson for interesting conversations about epsilon factors, differential equations and algebraic $K$-theory, and for their comments on a preliminary version of this article.

\section{De Rham epsilon factors: what we know so far}

\subsection{Beilinson--Bloch--Esnault's epsilon lines}\label{BBE}

The theory of de Rham epsilon factors for irregular flat connections on curves was developed (independently) by Deligne (in his farewell seminar at IHES) and Beilinson--Bloch--Esnault in \cite{MR1988970}. 

\begin{situation}\label{situation:BBE}
Let $k$ be a field of characteristic $0$ and $X$ a smooth $k$-scheme of dimension $1$. Furthermore, we assume either that $X$ is a smooth curve (possibly affine) or that $X$ is a \emph{trait}, that is, isomorphic to $\Spec k'[[t]]$ for some finite field extension $k'/k$. We choose an open subset $j\colon U \hookrightarrow X$ with closed complement $D \subset X$. We let $M$ be a holonomic $D$-module on $X$, such that $M|_U$ is $\Oo$-coherent, that is, corresponds to a vector bundle with a flat connection $\Ec=(E,\nabla)$ on $U$. We denote by $\nu$ a nowhere vanishing $1$-form on $U$.
\end{situation}

A \emph{graded line} over a commutative ring $R$ is a pair $(L,n)$ where $L$ is an invertible $R$-module and $n \in \Zb$ an integer. The groupoid of graded lines will be denoted by $\grPic(R)$. Addition in $\Zb$ and the tensor product $\otimes$ of invertible sheaves leads to a monoidal structure $\otimes$ on $\grPic(R)$. We endow $\grPic$ with the symmetric monoidal structure given by the symmetry constraint
\[
\xymatrix{
(L_1,n_1) \otimes (L_2,n_2) \ar[r] \ar[d] & (L_2,n_2) \otimes (L_1,n_1) \ar[d] \\
(L_1 \otimes L_2,n_1 + n_2) \ar[r]^{(-1)^{n_1n_2}c} & (L_2 \otimes L_1, n_1 + n_2),
}
\]
where $c\colon L_1 \otimes L_2 \to L_2 \otimes L_1$ is the canonical isomorphism.

Under the circumstances described in Situation \ref{situation:BBE}, the theory of \emph{loc. cit.} can be used to define a graded line $\varepsilon_{\nu}(M)$ which only depends on the geometry of $M$ and $X$ infinitely near $D$. We prepare the ground by introducing relative determinant lines. This has the desirable effect that we obtain a \emph{symmetric monoidal} functor from the groupoid of finitely generated projective $R$-modules (discarding all non-invertible maps) to $\grPic(R)$, denoted by
$$\grdet\colon (P^f(R))^{\times} \to \grPic(R)$$
which sends $V$ to $(\det V, \rk V)$.

\begin{lemma-definition}\label{lemma-defi:relative}
Let $\Ec$ be a vector bundle on $U$. 
\begin{enumerate}
\item[(a)] A lattice in $\Ec$ is an $\Oo$-coherent subsheaf $L \subset j_* \Ec$, such that $L|_U = \Ec$.

\item[(b)] For a pair of lattices $L_1, L_2 \subset j_*\Ec$ we define a graded $k$-line 
$$\grdet(L_1:L_2) = \grdet(\Gamma(L_1/L)) \otimes (\grdet(\Gamma(L_2/L)))^{-1},$$
where $L$ is an arbitrarily chosen lattice satisfying $L \subset L_1 \cap L_2$. We claim that this graded line only depends on $(L_1,L_2)$ up to a unique isomorphism.
\item[(c)] The relative graded determinant $\grdet(L_1:L_2)$ factorises as a tensor product (with almost all but finitely many factors trivialised)
$$\grdet(L_1:L_2) \simeq \bigotimes_{x \in X_{\rm closed}} \grdet(L_1:L_2)_x.$$
\end{enumerate}
\end{lemma-definition}

\begin{proof}
It suffices to show that $\grdet(L_1:L_2)$ is well-defined, that is, independent of the choice of $L$. For $L,L' \subset L_1 \cap L_2$ we may choose $L'' \subset L \cap L'$. We have $\grdet(\Gamma(L_1/L'') \simeq \grdet(\Gamma(L_1/L) \otimes \grdet(\Gamma(L/L'')$, and similarly $\grdet(\Gamma(L_2/L'') \simeq \grdet(\Gamma(L_2/L) \otimes \grdet(\Gamma(L/L'')$. This induces an isomorphism
$$\grdet(L_1:L_2) = \grdet(\Gamma(L_1/L)) \otimes (\grdet(\Gamma(L_2/L)))^{-1}\simeq \grdet(\Gamma(L_1/L'')) \otimes (\grdet(\Gamma(L_2/L'')))^{-1}.$$
Reversing the role of $L$ and $L'$ we obtain an isomorphism
$$\grdet(L_1:L_2) = \grdet(\Gamma(L_1/L)) \otimes (\grdet(\Gamma(L_2/L)))^{-1}\simeq \grdet(\Gamma(L_1/L')) \otimes (\grdet(\Gamma(L_2/L')))^{-1}.$$
We leave the verification of the fact that this isomorphism is independent of the choice of $L''$ to the reader.

The factorisation property (c) follows by defining $\grdet(L_1:L_2)_x = \grdet(\Gamma((L_1/L)_x) \otimes \grdet(\Gamma((L_2/L)_x))^{-1}$, where $(L_i/L)_x$ denotes the stalk at a closed point $x \in X$. Since $L_i/L$ is a skypscraper sheaf supported at closed points, we have 
$$\Gamma(L_i/L) = \bigoplus_{x \in X_{\rm closed}} (L_i/L)_x.$$
This concludes the proof.
\end{proof}

The multiplicativity of graded determinants with respect to short exact sequences yields the following transitivity property.

\begin{lemma}\label{lemma:transitivity}
For every triple of lattices $L_1,L_2,L_3 \subset j_*\Ec$ we have an isomorphism
$t_{123}\colon \grdet(L_1:L_2) \otimes \grdet(L_2:L_3) \to^{\simeq} \grdet(L_1:L_3)$, such that for every quadruple $L_1,L_2,L_3,L_4 \subset j_*\Ec$ we get a commutative diagram
\[
\xymatrix{
\grdet(L_1:L_2) \otimes \grdet(L_2:L_3) \otimes \grdet(L_3:L_4) \ar[r]^-{t_{123} \otimes \id} \ar[d]_{\id \otimes t_{234}} & \grdet(L_1:L_3) \otimes \grdet(L_3:L_4) \ar[d]^{t_{134}} \\
\grdet(L_1:L_2) \otimes \grdet(L_2:L_4) \ar[r]^-{t_{124}} & \grdet(L_1:L_4)
}
\]
of graded lines.
Similarly for $\grdet(L_i:L_j)_x$.
\end{lemma}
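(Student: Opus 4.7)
The plan is to reduce both the construction of $t_{123}$ and the verification of the associativity pentagon to the case where all lattices in sight share a common sublattice, where everything becomes the canonical pairing between a determinant and its inverse.

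First I would construct $t_{123}$. Given $L_1, L_2, L_3 \subset j_*\Ec$, pick any lattice $L$ with $L \subset L_1 \cap L_2 \cap L_3$ (such an $L$ exists: intersect the three pairwise intersections, and then any lattice contained in the resulting subsheaf works, e.g.~a sufficiently deep power of the maximal ideals times any chosen lattice). With this choice, each $\Gamma(L_i/L)$ is a finite-dimensional $k$-vector space, and by Lemma-Definition \ref{lemma-defi:relative}
\[
\grdet(L_i:L_j) \simeq \grdet(\Gamma(L_i/L)) \otimes \grdet(\Gamma(L_j/L))^{-1}.
\]
Then define $t_{123}$ as the tensor of the identities on $\grdet(\Gamma(L_1/L))$ and $\grdet(\Gamma(L_3/L))^{-1}$ with the canonical evaluation pairing $\grdet(\Gamma(L_2/L))^{-1} \otimes \grdet(\Gamma(L_2/L)) \to \mathbf{1}$ (after reordering factors via the symmetry of $\grPic$). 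Independence of the auxiliary $L$ follows exactly as in the proof of Lemma-Definition \ref{lemma-defi:relative}: if $L'' \subset L \cap L'$ is another common sublattice, the isomorphisms $\grdet(\Gamma(L_i/L'')) \simeq \grdet(\Gamma(L_i/L)) \otimes \grdet(\Gamma(L/L''))$ are compatible with the evaluation pairings, so the two recipes for $t_{123}$ agree.

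For the pentagon (really a square here, as stated), pick a single lattice $L$ contained in $L_1 \cap L_2 \cap L_3 \cap L_4$. Then every $\grdet(L_i:L_j)$ in the diagram is computed as $\grdet(\Gamma(L_i/L)) \otimes \grdet(\Gamma(L_j/L))^{-1}$, and every transitivity map $t_{ijk}$ becomes, up to symmetry isomorphisms, the evaluation pairing on the $\grdet(\Gamma(L_j/L))$-factor. The commutativity of the square then reduces to the tautological fact that contracting two different internal factors of
\[
\grdet(\Gamma(L_1/L)) \otimes \grdet(\Gamma(L_2/L))^{-1} \otimes \grdet(\Gamma(L_2/L)) \otimes \grdet(\Gamma(L_3/L))^{-1} \otimes \grdet(\Gamma(L_3/L)) \otimes \grdet(\Gamma(L_4/L))^{-1}
\]
yields the same result $\grdet(\Gamma(L_1/L)) \otimes \grdet(\Gamma(L_4/L))^{-1}$, which is a general property of the symmetric monoidal duality on $\grPic(k)$. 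The Koszul signs introduced by the symmetry constraint in $\grPic$ cancel because in each path the same pair of factors is pushed past each other.

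The local version, asserting the same for $\grdet(L_i:L_j)_x$, is entirely parallel: the same $L$ works and the argument is applied to the stalks, using that $L_i/L$ is a skyscraper sheaf and $\Gamma(L_i/L) = \bigoplus_{x} (L_i/L)_x$ respects all tensor and determinant formations on finite direct sums. The main (and really only) obstacle is the bookkeeping of the symmetry constraint in $\grPic$, which is controlled by always working inside the common framework of a single sublattice $L$, so no Koszul sign ambiguity appears.
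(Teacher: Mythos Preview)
Your proposal is correct and essentially follows the approach the paper indicates. The paper does not give a proof of this lemma at all: it simply prefaces the statement with the sentence ``The multiplicativity of graded determinants with respect to short exact sequences yields the following transitivity property,'' and leaves it at that. Your argument is a careful unpacking of exactly this: by choosing a common sublattice $L$ you reduce $t_{123}$ to the canonical contraction $\grdet(\Gamma(L_2/L))^{-1}\otimes\grdet(\Gamma(L_2/L))\to\mathbf{1}$, and the coherence square to the tautology that two independent contractions commute in $\grPic(k)$.
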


\begin{lemma-definition}\label{lemma-defi:BBE} 
\begin{enumerate}
\item[(a)] A \emph{pair of lattices} for $\Ec=(E,\nabla)$ are locally free $\Oo$-coherent subsheaves $L, N\hookrightarrow j_*E$, such that $L|_U = E$, $N|_U = E$ and satisfying $\nabla(L) \subset N \otimes \Omega_X^1(D)$.

\item[(b)] We denote by $\DR(L,N))$ the complex of sheaves
$[L \to^{\nabla} N \otimes \Omega_X^1(D)].$

\item[(c)] We define 
$$\tilde{\varepsilon}_{\nu}(X,\Ec) = \grdet(L:\nu^{-1}(N \otimes \Omega_X^1)) \otimes \left(\grdet(R\Gamma(\DR(L,N))\right)^{-1} \otimes \grdet(R\Gamma_{dR}(U,\Ec).$$

\item[(c)] The de Rham epsilon line is defined to be the graded line $\varepsilon_{\nu}(X,M) = \tilde{\varepsilon}_{\nu}(E,\nabla) \otimes \grdet(R\Gamma_{dR,D}(M)).$
\end{enumerate}
\end{lemma-definition}

\begin{proof}
Only point (c) requires clarification. We need to verify that the definition given there is independent of choices. Let $(L_1,N_1)$ be two pairs of lattices for $(E,\nabla)$. Without loss of generality we may assume $L_1 \subset L_2$ and $N_1 \subset N_2$. The transitivity property for the relative determinant (Lemma \ref{lemma:transitivity}) yields
\begin{eqnarray*}
\grdet(L_2:\nu^{-1}(N_2 \otimes \Omega_X^1(D))) \simeq \\ \simeq  \grdet(L_2:L_1) \otimes \grdet(L_1:\nu^{-1}(N_1 \otimes \Omega_X^1(D))) \otimes \grdet(\nu^{-1}(N_1 \otimes \Omega_X^1(D)):\nu^{-1}(N_2 \otimes \Omega_X^1(D))).
\end{eqnarray*}
Similarly, multiplicativity of graded determinants with respect to short exact sequences implies
$$\grdet(R\Gamma(\DR(L_2,N_2))) \simeq \grdet(\Gamma(L_1/L_2)) \grdet(R\Gamma(\DR(L_1,N_1))) \otimes \grdet(\Gamma(N_2/N_1)).$$
By definition we have $\grdet(\Gamma(L_1/L_2)) = \grdet(L_1:L_2)$. Therefore we see that 
\begin{eqnarray*}
\grdet(L_1:\nu^{-1}(N_1 \otimes \Omega_X^1(D))) \otimes \left(\grdet(R\Gamma(\DR(L_1,N_1))\right)^{-1} \simeq \\ \simeq \grdet(L_2:\nu^{-1}(N_2 \otimes \Omega_X^1(D))) \otimes \left(\grdet(R\Gamma(\DR(L_2,N_2))\right)^{-1}.\end{eqnarray*}
This concludes the proof.
\end{proof}

If $(L,N)$ satisfies the property 
$$[L \to^{\nabla} N \otimes \Omega_X^1(D)] \simeq Rj^{dR}_*\Ec,$$
then one says that $(L,N)$ is a \emph{good lattice pair}. According to a theorem of Deligne, good lattice pairs always exist (\cite[p. 110-112]{MR0417174}).

\begin{corollary}\label{cor:good}
Let $(L,N)$ be a good lattice pair for $\Ec$. Then we have an isomorphism
$$\tilde{\varepsilon}_{\nu}(X,\Ec) \simeq \grdet(M:\nu^{-1}(N \otimes \Omega_X^1(S))).$$
\end{corollary}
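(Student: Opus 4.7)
The plan is a direct unravelling of the definition: under the good lattice pair hypothesis, two of the three factors in $\tilde{\varepsilon}_{\nu}(X,\Ec)$ cancel canonically, and the third is precisely the desired relative determinant.

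First I would identify the middle factor with the last one. By construction, $\DR(L,N)$ is the two-term complex of sheaves $[L \xrightarrow{\nabla} N \otimes \Omega_X^1(D)]$ on $X$, and the good lattice pair hypothesis asserts an isomorphism $\DR(L,N) \simeq Rj^{dR}_*\Ec$ in the derived category of sheaves on $X$. Applying $R\Gamma(X,-)$ and using the Leray-type identity $R\Gamma(X, Rj^{dR}_*\Ec) \simeq R\Gamma(U,\Ec) = R\Gamma_{dR}(U,\Ec)$, we obtain a canonical isomorphism of perfect complexes $R\Gamma(\DR(L,N)) \simeq R\Gamma_{dR}(U,\Ec)$.

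Next I would apply the functor $\grdet$ to this isomorphism. Since $\grdet$ takes quasi-isomorphisms of perfect complexes to isomorphisms of graded lines, we get $\grdet(R\Gamma(\DR(L,N))) \simeq \grdet(R\Gamma_{dR}(U,\Ec))$. Tensoring one side with the inverse of the other produces the trivial graded line $(k,0)$. Substituting into the defining formula
$$\tilde{\varepsilon}_{\nu}(X,\Ec) = \grdet(L:\nu^{-1}(N \otimes \Omega_X^1(D))) \otimes \grdet(R\Gamma(\DR(L,N)))^{-1} \otimes \grdet(R\Gamma_{dR}(U,\Ec)),$$
the last two factors drop out and we are left with the first, which is the asserted isomorphism (reading the typo $M$ as $L$ and $S$ as $D$).

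The only points requiring care, rather than a real obstacle, are bookkeeping: one should check that the isomorphism $\DR(L,N) \simeq Rj^{dR}_*\Ec$ is implemented by the evident comparison map, so that the resulting identification of graded lines is the canonical one and the cancellation is not merely abstract; and that perfectness of the complexes involved (needed to apply $\grdet$) is guaranteed by the setting of Situation \ref{situation:BBE}, where $L$ and $N \otimes \Omega_X^1(D)$ are coherent on $X$ (proper or a trait) and hence have perfect cohomology. The whole argument is essentially formal once the good lattice condition is used.
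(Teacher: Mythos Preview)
Your argument is correct and is precisely the intended one: the paper states this as an immediate corollary without proof, and the only content is exactly the cancellation you describe---the good lattice pair condition gives $R\Gamma(\DR(L,N)) \simeq R\Gamma_{dR}(U,\Ec)$, so the second and third factors in the definition of $\tilde{\varepsilon}_{\nu}(X,\Ec)$ cancel. Your identification of the typos ($M$ for $L$, $S$ for $D$) is also right.
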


\begin{rmk}
This is the definition the de Rham epsilon line given by Deligne in his farewell seminar at IHES. 
\end{rmk}

Using the factorisation property (c) of Lemma-Definition \ref{lemma-defi:relative}, one can define local epsilon factors.

\begin{lemma-definition}
There exists $\tilde{\varepsilon}^{BBE}_{\nu,x}(\Ec)$ and $\varepsilon^{BBE}_{\nu,x}(M)$, such that 
$$\tilde{\varepsilon}^{BBE}_{\nu}(X,\Ec)\simeq \bigotimes_{x \in U_{\rm closed}} \tilde{\varepsilon}_{\nu,x}(\Ec),$$
$$\varepsilon^{BBE}_{\nu}(X,M) \simeq  \bigotimes_{x \in U_{\rm closed}} {\varepsilon}_{\nu,x}(M).$$
\end{lemma-definition}

The product formula for a proper smooth varieties $X$ follows directly from the definition, using the defining property of good lattice pairs. We denote by $H^*_{dR}(X,M)$ the de Rham cohomology of a holonomic $D$-module on $X$ which lives in degrees $[0,2]$.

\begin{theorem}[Product formula]
Suppose that in Situation \ref{situation:BBE}, $X$ is a smooth proper curve. Let $M$ be a (bounded complex of) holonomic $D$-module on $X$. Then, we have an isomorphism of graded lines 
$$\grdet(H^*_{dR}(X,M)) \simeq \varepsilon^{BBE}_{\nu}(X,M) \simeq \bigotimes_{x \in X_{\rm closed}} \varepsilon^{BBE}_{\nu,x}(M).$$ 
\end{theorem}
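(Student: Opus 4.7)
The plan is to pick a good lattice pair $(L,N)$ for $\Ec=(E,\nabla)=M|_U$, which exists by Deligne's theorem, and to combine Corollary \ref{cor:good} with the localisation triangle in $D$-module cohomology. For $X$ proper, $R\Gamma(X,\DR(L,N)) \simeq R\Gamma(X,Rj_*^{dR}\Ec) \simeq R\Gamma_{dR}(U,\Ec)$ by the defining property of a good lattice pair, so Corollary \ref{cor:good} yields $\tilde{\varepsilon}_\nu(X,\Ec) \simeq \grdet(L:\nu^{-1}(N\otimes\Omega_X^1(D)))$. The second isomorphism in the theorem -- the product decomposition -- is then immediate from the factorisation property of Lemma-Definition \ref{lemma-defi:relative}(c) applied to this relative determinant, combined with the evident decomposition $R\Gamma_{dR,D}(M) \simeq \bigoplus_{x \in D_{\rm closed}} R\Gamma_{dR,x}(M)$, which is local at closed points of $D$.

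The main task is therefore to produce the first isomorphism $\grdet(H^*_{dR}(X,M)) \simeq \varepsilon^{BBE}_\nu(X,M)$. I would apply the localisation distinguished triangle
\[
R\Gamma_{dR,D}(M) \to R\Gamma_{dR}(X,M) \to R\Gamma_{dR}(U,\Ec),
\]
and use multiplicativity of $\grdet$ in distinguished triangles to obtain
\[
\grdet(H^*_{dR}(X,M)) \simeq \grdet(R\Gamma_{dR,D}(M)) \otimes \grdet(R\Gamma_{dR}(U,\Ec)).
\]
Comparing with $\varepsilon^{BBE}_\nu(X,M) = \tilde{\varepsilon}_\nu(X,\Ec) \otimes \grdet(R\Gamma_{dR,D}(M))$, the problem reduces to proving the single identity $\grdet(R\Gamma_{dR}(U,\Ec)) \simeq \grdet(L:\nu^{-1}(N\otimes\Omega_X^1(D)))$.

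The final step is a direct computation on the proper curve $X$. Since $(L,N)$ is a good lattice pair, $R\Gamma_{dR}(U,\Ec)$ is computed by the two-term complex $[L \xrightarrow{\nabla} N\otimes\Omega_X^1(D)]$, whose graded determinant equals $\grdet(R\Gamma(X,L)) \otimes \grdet(R\Gamma(X, N\otimes\Omega_X^1(D)))^{-1}$. Multiplication by $\nu$ induces an isomorphism of coherent sheaves $\nu^{-1}(N\otimes\Omega_X^1(D)) \xrightarrow{\sim} N\otimes\Omega_X^1(D)$ (it is invertible on $U$ because $\nu$ is nowhere vanishing there, and well-defined on $X$ by construction of $\nu^{-1}(-)$). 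Choosing a common sublattice $L' \subset L \cap \nu^{-1}(N\otimes\Omega_X^1(D))$ and applying multiplicativity of $\grdet$ in the short exact sequences $0 \to L' \to L \to L/L' \to 0$ and $0 \to L' \to \nu^{-1}(N\otimes\Omega_X^1(D)) \to \nu^{-1}(N\otimes\Omega_X^1(D))/L' \to 0$ then produces
\[
\grdet(L:\nu^{-1}(N\otimes\Omega_X^1(D))) \simeq \grdet(R\Gamma(X,L)) \otimes \grdet(R\Gamma(X,\nu^{-1}(N\otimes\Omega_X^1(D))))^{-1},
\]
and the isomorphism induced by $\nu$ finishes the identification.

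The hard part will be bookkeeping the signs and degree shifts coming from the symmetry constraint $(-1)^{n_1 n_2} c$ on $\grPic$, and verifying that all identifications built above are canonical, independent of the choice of good lattice pair $(L,N)$ and of common sublattice $L'$. This canonicity is guaranteed by the transitivity coherence of Lemma \ref{lemma:transitivity}, applied at each stage of the reduction; once this is checked, the product formula follows by assembling the local factorisations over $x \in X_{\rm closed}$.
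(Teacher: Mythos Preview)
Your proposal is correct and follows essentially the same approach as the paper's proof. Both arguments reduce to showing $\grdet(R\Gamma_{dR}(U,\Ec)) \simeq \grdet(L:\nu^{-1}(N\otimes\Omega_X^1(D)))$ for a good lattice pair, compute the left side as $\grdet(R\Gamma(X,L)) \otimes \grdet(R\Gamma(X,N\otimes\Omega_X^1(D)))^{-1}$ via the two-term de Rham complex, and then identify this with the relative determinant; your write-up simply spells out the localisation triangle and the common-sublattice step that the paper leaves implicit.
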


\begin{proof}
The proof is reduced to the case $M = Rj_*^{dR}\Ec$ where $\Ec = (E,\nabla)$ is a flat vector bundle on an open subset $U \subset X$. We have to produce an isomorphism
$$\grdet(H^*_{dR}(U,\Ec)) \simeq \tilde{\varepsilon}^{BBE}_{\nu}(\Ec).$$
For a good lattice pair $(L,N)$ we have 
$$\grdet(R\Gamma(X,L)) \otimes \grdet(R\Gamma(X,N))^{-1} \simeq \grdet(R\Gamma(U,\Ec)).$$
The left hand side is isomorphic to $\grdet(L:\nu^{-1}(N \otimes \Omega_X^1(D))) \simeq \tilde{\varepsilon}^{BBE}_{\nu}(\Ec).$
\end{proof}

The transitivity property of relative determinants yields the following structure on de Rham epsilon lines.

\begin{lemma-definition}\label{lemma-defi:BBE_beta}
For every short exact sequence of holonomic $D$-modules 
$$\xi\colon M' \hookrightarrow M \twoheadrightarrow M''$$
we have an isomorphism of graded lines
$$\beta^{BBE}_{\xi}\colon \varepsilon^{BBE}_{\nu}(M) \simeq \varepsilon^{BBE}_{\nu}(M') \otimes \varepsilon^{BBE}_{\nu}(M''),$$
such that for every diagram
\[
\xymatrix{
M \ar@{^(->}[r] & N \ar@{->>}[d] \ar@{^(->}[r] & P \ar@{->>}[d] \\
& N/M \ar@{^(->}[r]  & P/M \ar@{->>}[d] \\
& & P/N
}
\]
we have a commutative diagram of isomorphisms of graded lines
\[
\xymatrix{
\varepsilon^{BBE}_{\nu}(P) \ar[r] \ar[d] & \varepsilon^{BBE}_{\nu}(N) \otimes \varepsilon^{BBE}_{\nu}(P/N) \ar[d] \\
\varepsilon^{BBE}_{\nu}(M) \otimes \varepsilon^{BBE}_{\nu}(P/M) \ar[r] & \varepsilon^{BBE}_{\nu}(M) \otimes \varepsilon^{BBE}_{\nu}(N/M) \otimes \varepsilon^{BBE}_{\nu}(N/M)^{-1} \otimes \varepsilon^{BBE}_{\nu}(P/M). 
}
\]
The same thing holds for the local epsilon factors $\varepsilon^{BBE}_{\nu,x}$.
Furthermore, if $X$ is a proper smooth curve, then $\beta^{\xi}$ is compatible with the isomorphism
$$\grdet(R\Gamma_{dR}(N)) \simeq \grdet(R\Gamma_{dR}(M)) \otimes \grdet(R\Gamma_{dR}(P)).$$
\end{lemma-definition}

\subsection{Patel's epsilon factor}\label{Patel}

In \cite{MR2981817} Patel introduced a formalism of de Rham epsilon factors for higher-dimensional schemes. This subsection is devoted to reviewing Patel's construction. We denote by $k$ a field of characteristic $0$.

\begin{rmk}
Henceforth we adopt the grading conventions of Patel's paper \cite{MR2981817} where de Rham cohomology $R\Gamma_{dR}$ is supported in the degrees $[-n,n]$, $n = \dim X$. Furthermore we emphasise that $D$-modules will be \emph{right} $D$-modules.
\end{rmk}

This convention will lead to the appearance of a shift when comparing Patel's epsilon factors with those defined by Deligne and Beilinson--Bloch--Esnault.

\begin{definition}\label{defi:infinitelynear}
Let $X$ be a scheme of finite presentation and $D \subset X$ a closed subset. We say that a finite presentation morphism of schemes $f\colon Y \to X$ is \emph{an isomorphism infinitely near $D$}, if the induced morphism of formal schemes $\widehat{Y}^{\mathsf{form}}_D \to \widehat{X}^{\mathsf{form}}_D$ is an isomorphism.
\end{definition}

\begin{situation}\label{situation:patel}
Let $X$ be a smooth separated $k$-scheme, $D \subset X$ a closed subset, and $\nu \in \Omega^1_X(X \setminus D)$ a $1$-form. We denote by $\Perf(D_X,S)$ the bounded derived $\infty$-category of perfect $D$-modules on $X$, and by $\Perf^{\nu}(D_X)$ the full subcategory of objects $M$ whose singular support $S$ does not intersect the graph of $\nu$.
\end{situation}

\begin{theorem}[Patel]\label{thm:patel}
There exists a morphism of spectra $\Ee^{P}_{\nu}\colon \Kb(D_X,\nu) \to \Kb(X,D)$, satisfying the following properties.
\begin{itemize}
\item[(a)] ``Excision": For a morphism of smooth varieties $f\colon Y \to X$ which is an isomorphism infinitely near $D \subset X$ we have a commutative diagram of spectra
\[
\xymatrix{
\Kb(D_X,{\nu}) \ar[r] \ar[d]_{\Ec_{\nu}^P} & \Kb(D_Y,{f^*\nu}) \ar[d]^{\Ec^P_{f^*\nu}} \\
\Kb(X,D) \ar[r] & \Kb(Y,f^{-1}(D)).
}
\]

\item[(b)] ``Product formula": If $X$ is proper (and $\dim X = n$) we have a commutative diagram
\[
\xymatrix{
\Kb(D_X,{\nu}) \ar[r]^{\Ec_{\nu}^P} \ar[rd]_{R\Gamma_{dR}} & \Kb(X,D) \ar[d]^{R\Gamma} \\
& \Kb(k)
}
\]
relating the morphisms $R\Gamma$ and $R\Gamma_{dR}$.
\end{itemize}
\end{theorem}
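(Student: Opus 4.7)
The plan is to construct $\Ee^P_\nu$ at the level of objects via a $\nu$-twisted de Rham complex, promote it to a morphism of $K$-theory spectra using the functoriality of $K$ on stable $\infty$-categories, and then deduce the excision and product-formula properties from local and microlocal features of the construction.

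First, to a perfect right $D$-module $M$ on $X$ satisfying $\Gamma(\nu) \cap \mathrm{SS}(M) = \emptyset$ over $X \setminus D$, I would associate the Koszul-type twisted de Rham complex
$$\DR_\nu(M) := \bigl[\,M \otimes_{\Oo_X} {\textstyle \bigwedge^{\bullet}} \Theta_X,\ d_{dR} - \iota_\nu\,\bigr],$$
where $d_{dR}$ is the de Rham differential for right $D$-modules and $\iota_\nu$ is contraction by $\nu$. The microlocal hypothesis, combined with a Kashiwara--Schapira type vanishing criterion, forces the cohomology sheaves of this complex to be set-theoretically supported on $D$, so $\DR_\nu(M)$ lifts canonically to an object of $\Perf(X,D)$. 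Since this assignment is exact in $M$, it passes to $K$-theory spectra and provides the desired morphism $\Ee^P_\nu \colon \Kb(D_X,\nu) \to \Kb(X,D)$.

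Excision (a) is then essentially formal. The natural comparison map $\DR_\nu(M) \to Rf_*\DR_{f^*\nu}(f^*M)$ becomes an equivalence after restriction to the formal completion of $X$ along $D$, and $\Kb(X,D)$ depends only on such formal data via the fibre sequence $\Kb(X,D) \to \Kb(X) \to \Kb(X \setminus D)$ combined with a devissage argument identifying $\Kb(X,D)$ with a $K$-theory of coherent sheaves on the formal completion. The naturality of $\DR_\nu$ under smooth pullback then produces the required homotopy-commutative square.

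The product formula (b) is the main technical obstacle. One must exhibit a canonical equivalence $R\Gamma(X,\DR_\nu(M)) \simeq R\Gamma_{dR}(X,M)$ in $\Kb(k)$ when $X$ is proper. The natural approach is a deformation argument along the family $(t\nu)_{t \in \Ab^1}$: the complexes $\DR_{t\nu}(M)$ assemble into a perfect complex on $X \times \Ab^1$ (finiteness over $\Ab^1$ using properness of $X$), whose specialisations at $t=0$ and $t=1$ recover $R\Gamma_{dR}(X,M)$ and $R\Gamma(X,\Ee^P_\nu(M))$ respectively, and the $\Ab^1$-invariance of $K$-theory of a regular scheme identifies the two specialisations as classes in $\Kb(k)$. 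Upgrading this argument to the level of spectra (rather than just on $\pi_0$) requires a careful $\infty$-categorical enhancement — constructing the interpolating complex as a functor $\Ab^1 \to \Kb(k)$ out of the family of twisted de Rham complexes — and this is where the bulk of the work in \cite{MR2981817} is located.
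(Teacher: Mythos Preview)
Your approach is not the one the paper follows (it summarises Patel's actual construction), and the first step has a genuine gap. The claim that $\DR_\nu(M)=[M\otimes\bigwedge^\bullet\Theta_X,\,d_{dR}-\iota_\nu]$ has cohomology supported on $D$ is false in general. The hypothesis $\nu(U)\cap\mathrm{SS}(M)=\emptyset$ constrains only the \emph{symbol}: it makes the pure Koszul complex $[\gr^F M\otimes\bigwedge^\bullet\Theta_X,\,\iota_\nu]$ acyclic on $U$, but says nothing about the full differential $d_{dR}-\iota_\nu$. Concretely, take $X$ a curve, $M=\omega_X$ (so $\mathrm{SS}(M)$ is the zero section), $U=\G_m$, $\nu=dx/x$; then $\nu$ is nowhere vanishing on $U$, yet your complex identifies with $[\Oo_U\xrightarrow{\,g\mapsto g'-g/x\,}\Oo_U]$, whose kernel is $k\cdot x\neq 0$. (As a secondary issue, for $\dim X\geq 2$ and $d\nu\neq 0$ the operator $d_{dR}-\iota_\nu$ need not even square to zero.)

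Patel's construction, as the paper sketches, avoids this by passing to the associated graded \emph{before} contracting with $\nu$. His $\Ee^P_\nu$ is the composite $[-n]\circ\phi_\nu\circ Q_S$: first $Q_S\colon\Kb(D_X,S)\to\Kb(T^*X,S)$, built from good filtrations and $\gr$ via a support-refined version of Quillen's theorem on filtered rings; then $\phi_\nu\colon\Kb(T^*X,S)\to\Kb(X,D)$, obtained as the map on fibres of the localisation square assembled from the equivalence $(\pi^*)^{-1}\colon\Kb(T^*X)\xrightarrow{\sim}\Kb(X)$ (via $\Ab^1$-invariance over the regular base $X$) and the section pullback $\nu^*\colon\Kb(T^*U)\to\Kb(U)$. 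The price is that the construction lives purely in $K$-theory and does not arise from a single exact functor on $D$-modules. Excision is then immediate from the definitions, and the product formula reduces to the identity $R\Gamma_{dR}\simeq R\Gamma\circ L\nu_0^*\circ Q$ for the zero section $\nu_0$, combined with $L\nu_0^*\simeq(\pi^*)^{-1}$. Your $\Ab^1$-deformation idea for (b) is in the right spirit---it is essentially how $L\nu_0^*$ gets identified with $(\pi^*)^{-1}$---but it is being applied to the wrong object once the support claim in step one fails.
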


\begin{rmk}
According to Thomason--Trobaugh \cite[Theorem 2.6.3(d)]{MR1106918}, every $f\colon Y \to X$ as in Definition \ref{defi:infinitelynear} induces an equivalence $K$-theory spectra
$Lf^*\colon \Kb(X,D) \to^{\simeq} \Kb(Y,D).$
Thomason--Trobaugh's definition of \emph{isomorphisms infinitely near $D$} in \cite[Definition 2.6.2.1]{MR1106918} is different from our Definition \ref{defi:infinitelynear}. Yet, these two definitions are equivalent as is shown in \cite[Lemma-Definition 2.6.2.2]{MR1106918}.
In the light of Thomason--Trobaugh's result, the excision property (a) in Patel's Theorem \ref{thm:patel} can therefore be accurately described as stating that the epsilon factor $\Ec^{P}_{\nu}(M)$ of a holonomic $D$-modules depends only on the geometry of $X$ and $M$ near $D$.
\end{rmk}

If $D$ is proper, we may consider the composition $R\Gamma\circ \Ec_{\nu}^P\colon  \Kb(D_X,{\nu}) \to \Kb(k)$. Post-composing this with the graded determinant map $\Kb(k) \to \Pic^{\Zb}$, we define a de Rham epsilon line $\varepsilon^{P}_{\nu}(X,M)$. Furthermore, by virtue of the definition of algebraic $K$-theory, we have the following.

\begin{lemma-definition}\label{lemma-defi:P_beta}
For every short exact sequence of holonomic $D$-modules 
$$\xi\colon M' \hookrightarrow M \twoheadrightarrow M''$$
we have an isomorphism of graded lines
$$\beta^P_{\xi}\colon \varepsilon^P_{\nu}(X,M) \simeq \varepsilon^P_{\nu}(X,M') \otimes \varepsilon^P_{\nu}(X,M''),$$
such that for every diagram
\[
\xymatrix{
M \ar@{^(->}[r] & N \ar@{->>}[d] \ar@{^(->}[r] & P \ar@{->>}[d] \\
& N/M \ar@{^(->}[r]  & P/M \ar@{->>}[d] \\
& & P/N
}
\]
we have a commutative diagram of isomorphisms of graded lines
\[
\xymatrix{
\varepsilon^P_{\nu}(X,P) \ar[r] \ar[d] & \varepsilon^P_{\nu}(X,N) \otimes \varepsilon^P_{\nu}(X,P/N) \ar[d] \\
\varepsilon^P_{\nu}(X,M) \otimes \varepsilon^P_{\nu}(X,P/M) \ar[r] & \varepsilon^P_{\nu}(X,M) \otimes \varepsilon^P_{\nu}(X,N/M) \otimes \varepsilon^P_{\nu}(X,N/M)^{-1} \otimes \varepsilon^P_{\nu}(X,P/M). 
}
\]
\end{lemma-definition}

If $\dim X = 1$, $D \subset X$ a (reduced) divisor, we will show in Theorem \ref{thm:comparison} that Patel's epsilon line satisfies 
$$\varepsilon^{P}_{\nu}(X,M) \simeq \varepsilon^{BBE}_{\nu}(X,M),$$
and that with respect to this equivalence, $\beta^P_{\xi}$ corresponds to $\beta^{BBE}_{\xi}$.

In the remainder of this subsection we will sketch the main steps of Patel's construction. The starting point is a theorem of Quillen about the $K$-theory of $D$-modules on a smooth $k$-scheme $X$. We denote by $\FMod_{\coh}(D_X)$ the quasi-abelian category of $D_X$-modules with a good filtration. The associated graded defines an exact functor 
$$\gr\colon \FMod_{\coh}(D_X) \to \Coh(T^*X).$$

\begin{proposition}[Quillen]
There exists a morphism of spectra $Q\colon \Kb(D_X) \to \Kb(T^*X)$, such that the diagram
\[
\xymatrix{
\Kb(D_X) \ar[rd] & \Kb(\FMod_{\coh}(D_X)) \ar[l] \ar[d]^{\gr} \\
& \Kb(T^*X)
}
\]
commutes.
\end{proposition}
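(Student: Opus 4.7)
The plan is to factor the sought-after map as
\[
\Kb(D_X) \xleftarrow{\,u_*\,} \Kb(\FMod_{\coh}(D_X)) \xrightarrow{\,\gr_*\,} \Kb(T^*X),
\]
where $u$ denotes the functor forgetting the filtration, and then to define $Q := \gr_* \circ u_*^{-1}$. Commutativity of the triangle in the statement is tautological from this description. This reduces the proposition to two tasks: constructing $\gr_*$, and showing that $u_*$ is an equivalence of spectra.

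To construct $\gr_*$, I would first equip $\FMod_{\coh}(D_X)$ with its natural quasi-abelian exact structure, in which a short sequence $(M',F') \hookrightarrow (M,F) \twoheadrightarrow (M'',F'')$ is strictly exact precisely when $F'$ is the induced subfiltration and $F''$ the induced quotient filtration --- equivalently, when the sequence of associated gradeds is short exact in $\Coh(T^*X)$. By construction, $\gr$ carries strictly exact sequences to short exact sequences, so applying Schlichting's $K$-theory formalism for quasi-abelian exact categories produces the desired map $\gr_*$.

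The substantive step is showing that $u_* : \Kb(\FMod_{\coh}(D_X)) \xrightarrow{\simeq} \Kb(D_X)$ is an equivalence. Here I would invoke two classical facts about good filtrations: every coherent $D_X$-module admits at least one good filtration (locally via Bernstein's order filtration, then globalized by patching), and any two good filtrations $F, G$ on a fixed coherent module $M$ are mutually bounded, i.e.\ $F_i \subset G_{i+n}$ and $G_i \subset F_{i+n}$ for some $n \gg 0$. Together these say that $u$ is essentially surjective and that the system of good filtrations on any fixed $M$ is filtered up to shift. A Waldhausen approximation argument --- or equivalently a cofinality/Theorem A argument of Quillen--Grayson type --- then identifies $u_*$ as an equivalence.

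The main obstacle will be the bookkeeping in the quasi-abelian setting: given an unfiltered morphism $M \to N$ and a good filtration $F$ on $M$, one must produce a good filtration $G$ on $N$ so that the map becomes \emph{strict}. This is achieved by choosing an arbitrary good filtration $G_0$ on $N$, observing that the image of $F$ lies in some shift $G_0[n]$, and then replacing $G_0$ by $G_i := G_0[n]_i + f(F_i)$ --- which remains good by mutual boundedness and renders $f$ strict by construction. Globalization over $X$ from the affine case is routine but must be verified for the filtered category.
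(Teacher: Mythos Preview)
The paper does not supply its own proof of this proposition: it is stated as a known result of Quillen and then immediately refined to Patel's support-conditioned version. So there is nothing to compare against directly. Your approach --- invert the forgetful map on $K$-theory and compose with $\gr_*$ --- is precisely the classical argument one attributes to Quillen, and the outline you give (existence of good filtrations, mutual boundedness, strictification of morphisms, approximation/cofinality) is correct and complete in spirit.

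It is worth noting, however, that the paper later (in its treatment of associated graded modules, Lemma~\ref{lemma:bifilt} and Lemma~\ref{lemma:associated_graded}) takes a different route to the same end, following Patel: rather than proving that $u_*$ is an equivalence, one introduces the $K$-theory of \emph{bifiltered} modules $\Kb(FFD_X)$ and shows that the square
\[
\xymatrix{
\Kb(FFD_X) \ar[r] \ar[d] & \Kb(FD_X) \ar[d] \\
\Kb(FD_X) \ar[r] & \Kb(D_X)
}
\]
is cocartesian. The map $Q$ is then obtained from the universal property of the pushout, using that the two composites $\Kb(FFD_X) \rightrightarrows \Kb(FD_X) \xrightarrow{\gr} \Kb(T^*X)$ agree. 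This bifiltered method has the advantage that it adapts cleanly to the version with a support condition $S \subset T^*X$ (where one must track singular support through the construction), which is what the paper actually needs. Your direct inversion of $u_*$ is more elementary for the unconditioned statement, but would require additional work to handle supports.
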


The key result underlying Patel's epsilon factors is a refinement of Quillen's result on filtered rings. Let $S \subset T^*X$ be a closed subset. We denote by $\Perf(D_X,S)$ the derived $\infty$-category of perfect complexes of $D$-modules on $X$ with singular support contained in $S$.

\begin{proposition}[Patel]
There exists a morphism of spectra $Q_S\colon \Kb(D_X,S) \to \Kb(T^*X,S)$ fitting into a commutative square
\[
\xymatrix{
\Kb(D_X,S) \ar[d] \ar[r] & \Kb(T^*X,S) \ar[d] \\
\Kb(D_X) \ar[r] & \Kb(T^*X).
}
\] 
\end{proposition}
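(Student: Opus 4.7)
The plan is to refine Quillen's construction so that the singular-support constraint is tracked throughout. Introduce the quasi-abelian subcategory $\FMod_{\coh}(D_X)_S \subset \FMod_{\coh}(D_X)$ consisting of coherent $D$-modules equipped with a good filtration $F_\bullet M$ such that $\supp(\gr^F M) \subset S$. The associated graded functor is exact and, by construction, factors through $\Coh(T^*X)_S$, the abelian category of coherent $\Oo_{T^*X}$-modules set-theoretically supported on $S$. Applying algebraic $K$-theory of quasi-abelian categories (or, equivalently, passing to the derived $\infty$-category of perfect complexes) produces a map
\[
\gr_S\colon \Kb(\FMod_{\coh}(D_X)_S) \longrightarrow \Kb(\Coh(T^*X)_S) \simeq \Kb(T^*X, S).
\]

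Next, one compares $\Kb(\FMod_{\coh}(D_X)_S)$ with $\Kb(D_X, S)$. There is a ``forget the filtration'' functor $\FMod_{\coh}(D_X)_S \to \Perf(D_X, S)$, and the task is to show that it induces an equivalence of $K$-theory spectra. This parallels Quillen's argument (Proposition above) in the unconstrained case, which uses that every coherent $D$-module admits a good filtration together with a Waldhausen approximation / cofinality step allowing one to replace a bounded complex of $D$-modules with a bounded complex of filtered $D$-modules. The constraint $\supp(\gr^F M) \subset S$ is preserved throughout because singular support is defined via $\supp(\gr^F M)$ for any good filtration, and the intermediate good filtrations appearing in the proof (quotients, sub-filtrations, truncations, term-wise shifts) all have associated graded supported in $S$ since $S$ is closed and contains the associated graded of the input.

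The square then commutes by naturality: the obvious forgetful arrows $\FMod_{\coh}(D_X)_S \to \FMod_{\coh}(D_X)$ and $\Coh(T^*X)_S \to \Coh(T^*X)$ intertwine $\gr$, and combining this with the comparison of step two with the unconstrained Quillen map $Q$ yields the desired commuting square of $K$-theory spectra.

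The main obstacle is the second step: verifying that the singular-support restriction is genuinely compatible with the equivalence $\Kb(\FMod_{\coh}(D_X)) \simeq \Kb(D_X)$ coming from Quillen. In the $S$-constrained setting one must arrange good filtrations of complexes of $D$-modules so that every level of the filtration, and every cycle/boundary appearing in an approximation argument, still has associated graded supported on $S$. This is the content of the singular-support formalism (Kashiwara--Schapira, Laumon) and ultimately rests on the closedness of $S$ and the fact that subquotients of good filtrations of modules with $SS \subset S$ automatically have $\gr$ supported in $S$.
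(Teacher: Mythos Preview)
Your outline is a reasonable strategy, but it diverges from Patel's argument at exactly the step you identify as ``the main obstacle,'' and the justification you give there is not sufficient.

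Patel (and this paper, in Lemma~\ref{lemma:bifilt} and Lemma~\ref{lemma:associated_graded}) does \emph{not} try to prove that the forgetful map $\Kb(\FMod_{\coh}(D_X)_S) \to \Kb(D_X,S)$ is an equivalence. Instead the construction routes through \emph{bifiltered} $D$-modules: one establishes a cocartesian square of spectra
\[
\xymatrix{
\Kb(FFD_X,S) \ar[r] \ar[d] & \Kb(FD_X,S) \ar[d] \\
\Kb(FD_X,S) \ar[r] & \Kb(D_X,S),
}
\]
where the two arrows out of the top left corner forget the first and second filtration respectively. One then checks, by reducing to the case of adjacent filtrations, that the two composites $\Kb(FFD_X,S) \rightrightarrows \Kb(FD_X,S) \xrightarrow{\gr} \Kb(T^*X,S)$ agree. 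The universal property of the pushout then manufactures $Q_S$ directly, with the commuting square coming along for free. No equivalence between filtered and unfiltered $K$-theory with support is ever invoked.

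The reason this matters: your claimed equivalence may well be true, but it is not a formal consequence of Quillen's unconstrained argument. Quillen's proof compares different good filtrations of the same module via shifts and intermediate filtrations; once you impose $\supp(\gr^F M)\subset S$ you must control the support of every auxiliary filtration appearing in that comparison, and ``closedness of $S$ plus stability under subquotients'' does not obviously do this (two good filtrations of $M$ have the same support on their associated gradeds, but the interpolating objects in Quillen's argument are not simply subquotients of either). Patel's bifiltered square is precisely the device that organises this comparison cleanly. So either you must actually carry out a support-constrained version of Quillen's d\'evissage---which would be real work beyond what you have written---or you should pass through the bifiltered pushout as Patel does.
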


We denote by $U = X \setminus D$. The $1$-form $\nu$ defines a section $\nu\colon U \to T^*X$ which does not intersect $S$ (by assumption). Let $\pi\colon T^*X \to X$ be the canonical projection.

The identity $\pi \circ \nu = \id_U$ gives rise to a commutative diagram of spectra
\[
\xymatrix{
\Kb(X) \ar[rd] \ar[r]^{\pi^*} & \Kb(T^*X) \ar[d]^-{\nu^*} \\
& \Kb(U).
}
\]
Since $X$ is regular and $\pi\colon T^*X \to X$ is Zariski-locally a fibration in to affine spaces, the induced morphism of spectra $\pi^*\colon \Kb(X) \to^{\simeq} \Kb(T^*X)$ is an equivalence. 
Therefore we also have a commutative diagram
\[
\xymatrix{
\Kb(X) \ar[rd] & \Kb(T^*X) \ar[l]_{(\pi^*)^{-1}} \ar[d]^-{\nu^*} \\
& \Kb(U).
}
\]
In particular, we get a commutative square
\[
\xymatrix{
\Kb(T^*X) \ar[r] \ar[d]_-{(\pi^*)^{-1}} & \Kb(T^*U) \ar[d]^-{\nu^*} \\
\Kb(X) \ar[r] & \Kb(U).
}
\]
This induces a morphism between the fibres (of the rows) $\phi_{\nu}\colon \Kb(T^*X,S) \to \Kb(X,D)$.

\begin{definition}[Patel]
The morphism $\Ec_{\nu}^P$ is defined to be $[-n] \circ \phi_{\nu} \circ Q_S\colon \Kb(D_X,{\nu}) \to \Kb(X,D)$, where $n = \dim X$.
\end{definition}

The excision property is obvious from the definitions and the product formula the content of \cite[Corollary 3.23]{MR2981817}.

\subsection{Comparison}

This subsection is devoted to verifying that Patel's epsilon line for curves agrees with Beilinson--Bloch--Esnault's epsilon line.

\begin{theorem}\label{thm:comparison}
\begin{enumerate}
\item[(a)]
For a holonomic $D$-module $M$ with singular support $S$ on $X$ we have an isomorphism of graded lines
$$\alpha_{M}\colon \varepsilon^{P}_{\nu}(X,M) \to^{\simeq} \varepsilon^{BBE}_{\nu}(X,M)[-1],$$
where $\nu(U) \cap S = \emptyset$.
\item[(b)]
For every short exact sequence of holonomic $D$-modules on $X$
$$\xi\colon M' \hookrightarrow M \twoheadrightarrow M'' $$
we have a commutative diagram
\[
\xymatrix{
\varepsilon^P_{\nu}(X,M) \ar[r]^{\alpha_M} \ar[d]_{\beta_{\xi}^P} & \varepsilon^{BBE}_{\nu}(X,M) \ar[d]^{\beta_{\xi}^{BBE}}[-1] \\
\varepsilon^P_{\nu}(X,M') \otimes \varepsilon^P_{\nu}(X,M'') \ar[r]^{\alpha_{M'} \otimes \alpha_{M''}} & \varepsilon^{BBE}_{\nu}(X,M')[-1]\otimes \varepsilon^{BBE}_{\nu}(X,M'')[-1].
}
\]
of isomorphisms of graded lines, where the isomorphisms $\beta^{BBE}_{\xi}$ and $\beta^P_{\xi}$ are specified by Lemma-Definition \ref{lemma-defi:BBE_beta} and Lemma-Definition \ref{lemma-defi:P_beta}.
\item[(c)] Let $X$ be proper (in addition to smooth) and $M$ a holonomic $D$-module on $X$. There is a commutative diagram
\[
\xymatrix{
\varepsilon^P_{\nu}(X,M) \ar[r]^{\alpha_M} \ar[rd] & \varepsilon^{BBE}_{\nu}(X,M)[-1] \ar[d] \\
& \grdet(R\Gamma_{dR}(X,M))
}
\]
and this isomorphism is compatible with short exact sequences of holonomic $D$-modules.
\end{enumerate}
\end{theorem}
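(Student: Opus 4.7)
The plan is to reduce to a good lattice pair---which makes both $\varepsilon^{BBE}_\nu$ and $\varepsilon^P_\nu$ explicit---and then directly compute Patel's morphism $\phi_\nu$ on the resulting $2$-term complex. Since both $\varepsilon^{BBE}$ and $\varepsilon^P$ are additive in short exact sequences (Lemma-Definitions~\ref{lemma-defi:BBE_beta} and~\ref{lemma-defi:P_beta}), the construction of $\alpha_M$ in part (a) reduces to the case $M = Rj_*^{dR}\Ec$ with $\Ec = (E,\nabla)$ a flat bundle on $U = X \setminus D$. Fix a good lattice pair $(L,N)$ for $\Ec$ (which exists by Deligne's theorem). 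On the BBE side, Corollary~\ref{cor:good} together with the vanishing $R\Gamma_{dR,D}(Rj_*^{dR}\Ec) = 0$ gives
$$\varepsilon^{BBE}_\nu(X,M) \simeq \grdet(L : \nu^{-1}(N \otimes \Omega_X^1)).$$

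To compute Patel's side, endow $M$ with the good filtration $F_k M = L \cdot F_k D_X$. Then $\gr^F M$ is a coherent sheaf on $T^*X$ supported on the singular support $S$, and is resolved by the $2$-term Koszul-type complex
$$[\pi^*L \xrightarrow{\sigma(\nabla)} \pi^*(N \otimes \Omega_X^1(D))],$$
where $\sigma(\nabla)$ is the principal symbol of the connection, i.e.\ ``multiplication by the tautological $1$-form $\tau$'' composed with the natural lattice inclusions. On $T^*X \setminus S$ this complex is acyclic; pulled back along the section $\nu\colon U \to T^*X \setminus S$ it becomes multiplication by the non-vanishing $1$-form $\nu$, an isomorphism $E \xrightarrow{\sim} E \otimes \Omega_U^1$. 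The morphism $\phi_\nu$ combines the inverse pullback $(\pi^*)^{-1}$ with this $\nu^*$-trivialization; the resulting class in $K(X,D)$ is that of the torsion sheaf $L/\nu^{-1}(N \otimes \Omega_X^1)$ supported on $D$. Applying $R\Gamma$ and $\grdet$ and comparing with Lemma-Definition~\ref{lemma-defi:relative} yields $\grdet(L : \nu^{-1}(N \otimes \Omega_X^1))$, up to the shift $[-n] = [-1]$ from Patel's placement of $R\Gamma_{dR}$ in degrees $[-n,n]$. This furnishes $\alpha_M$, and independence of $(L,N)$ is a consequence of the transitivity in Lemma~\ref{lemma:transitivity}.

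Part (b) follows by observing that a short exact sequence $\xi$ of $D$-modules lifts, after enlarging lattices if necessary, to a compatible short exact sequence of good lattice pairs; both $\beta^P_\xi$ and $\beta^{BBE}_\xi$ are then induced by the symmetric-monoidal structure of $\grdet$ applied to this sequence, hence correspond under $\alpha$. Part (c) reduces to combining (a) with the two product formulas: Patel's (Theorem~\ref{thm:patel}(b)) identifies $R\Gamma \circ \Ec^P_\nu(M)$ with $R\Gamma_{dR}(X,M)$, while BBE's identifies $\varepsilon^{BBE}_\nu(X,M)$ with $\grdet(H^*_{dR}(X,M))$, and the two identifications are compatible with $\alpha_M$ essentially by construction.

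The main obstacle is the explicit computation of $\phi_\nu$ on the Koszul complex. The morphism $\phi_\nu$ is only defined abstractly as a map of fibres, and extracting the coherent sheaf $L/\nu^{-1}(N \otimes \Omega_X^1)$ requires careful tracking of the $\nu^*$-null-homotopy, together with a comparison of the resulting $K$-theoretic class with the relative determinant of Lemma-Definition~\ref{lemma-defi:relative}. This step also pinpoints the shift $[-1]$, and verifying its sign consistency demands careful bookkeeping of the left/right $D$-module and cohomological-degree conventions on both sides.
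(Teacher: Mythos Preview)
Your overall strategy---reduce to $M = j_*\Ec$, pick a good lattice pair $(L,N)$, and compute both sides explicitly---matches the paper's, but the central computational step is where your proposal and the paper diverge, and where your argument has a genuine gap.

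You assert that for the filtration $F_k M = L \cdot F_k D_X$, the associated graded $\gr^F M$ on $T^*X$ is resolved by the two-term complex $[\pi^*L \xrightarrow{\sigma(\nabla)} \pi^*(N \otimes \Omega_X^1(D))]$. This is not justified and is essentially false for irregular connections. The principal symbol of $\nabla$ is simply multiplication by the tautological $1$-form, an $\Oo$-linear map $\pi^*L \to \pi^*L \otimes \pi^*\Omega^1_X$; composing with the inclusion $L \hookrightarrow N(D)$ does not produce a resolution of $\gr^F M$. Indeed, for a rank $1$ connection $\mathcal{L}$ of irregularity $i$, the paper uses the filtration $(j_*\mathcal{L})_{\leq m} = \Oo(m(i+1))$, which jumps by $i+1$ at each step, precisely to capture the irregularity in $\gr^F$; the naive filtration $L \cdot F_k D_X$ does not see $N$ at all. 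A related subtlety you elide: $\nabla$ is not $\Oo_X$-linear, so the passage from the de Rham complex $[L \xrightarrow{\nabla} N\otimes\Omega^1_X(D)]$ to anything living on $T^*X$ requires care.

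The paper avoids computing $Q_S$ directly. Instead it introduces auxiliary maps $\gamma_0, \gamma_\nu \colon \pi_{\leq 2} K(\Loc(U)) \to \pi_{\leq 2} K(X)$ (respectively $\pi_{\leq 2} K(X,D)$), defined by sending $\Ec$ to $[L \to^{0} N\otimes\Omega^1_X(D)]$ and $[L \to^{\nu} N\otimes\Omega^1_X(D)]$; the key trick in showing these are well-defined is that in $K$-theory of sheaves of $k$-vector spaces (equivalent to $K(X,Z)$ for skyscraper support), the complexes with differentials $0$, $\nu$, and $\nabla$ all represent the same class, even though $\nabla$ is not $\Oo$-linear. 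The identification $\gamma_\nu \simeq \Ec^P_\nu$ is then proved indirectly: excision and formal descent reduce to a trait, Levelt--Turritin reduces to pushforwards of rank $1$ connections, and for those the good filtration above allows an explicit computation of $Q_S$. Your direct Koszul approach, if it could be made to work, would be more conceptual, but as written it is missing precisely the mechanism that links the lattice $N$ to the associated graded on $T^*X$.
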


The proof of this result will be given below. At first we link Patel's $Q_S$ and Quillen's $Q$ with Deligne's good pairs. This is the content of Lemma \ref{lemma:missing_link} below.

We denote by $\pi_{\leq 1}\colon \Grpd \to \mathsf{Grpd}$ the functor sending a space $X$ to the Poincar\'e groupoid consisting of points of $X$ and homotopy classes of paths between points. Similarly, we denote by $\pi_{\leq 2}$ the functor sending a space $X$ to the Poincar\'e $2$-groupoid, consisting of points, paths between points, and homotopy classes of homotopies between paths. 

In the following we denote by $[A \to B]$ a chain complex where $A$ is supported in degree $-1$ and $B$ in degree $0$.

\begin{lemma-definition}\label{lemma-defi:comparison}
\begin{enumerate}
\item[(a)] Let $X$, $U$, and $\Ec = (E,\nabla)$ be as in Situation \ref{situation:BBE}. There is a morphism
$$\gamma_0\colon \pi_{\leq 2}K(\Loc(U)) \to \pi_{\leq 2}K(X),$$
such that for a good lattice pair $(L,N)$ for $\Ec$ we have
$$\gamma_0(\Ec) \simeq [L \to^0 N \otimes \Omega_X^1(D)].$$ 

\item[(b)] For $\nu \in \Omega_X^{1,\times}(U)$ we have a morphism
$$\gamma_{\nu}\colon \pi_{\leq 2}K(\Loc(U)) \to \pi_{\leq 2}K(X,D),$$
such that the square
\[
\xymatrix{
\pi_{\leq 2}K(\Loc(U)) \ar[r]^{\gamma_{\nu}} \ar[d] & \pi_{\leq 2}K(X,D) \ar[d] \\
\pi_{\leq 2}K(\Loc(U)) \ar[r]^{\gamma_{0}} & \pi_{\leq 2}K(X) 
}
\]
commutes, and for a good lattice pair $(L,N)$ for $\Ec$ we have 
$$\gamma_{\nu}(\Ec) \simeq [L \to^{\nu} N \otimes \Omega_X^1(D)].$$
\end{enumerate}
\end{lemma-definition}

\begin{proof}
(a): We check first that there's a well-defined map $\Loc(U)^{\times} \to \pi_{\leq 2}K(X)$ sending $\Ec \in \Loc(U)$ to $[L \to^0 N \otimes \Omega_X^1(D)].$

\begin{claim}\label{claim:nu0}
Let $(L_1,N_1)$, and $(L_2,N_2)$ be good lattices for $\Ec$. There exists a homotopy $q_{12}$ in $\pi_{\leq 2}K(X)$ between
$[L_1 \to^0 N_1 \otimes \Omega_X^1(D)]$ and $[L_2 \to^0 N_2 \otimes \Omega_X^1(D)].$ Furthermore, given a third good lattice pair $(L_3,N_3)$ we have 
$$q_{12} \cdot{} q_{23} \simeq q_{13}.$$
This construction is compatible with quadruples of good lattice pairs.
\end{claim}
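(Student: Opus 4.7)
The plan is to lift the assignment $(L,N) \mapsto [L \to^0 N \otimes \Omega^1_X(D)]$ to a functor from the poset $\mathcal{P}$ of good lattice pairs for $\Ec$ (ordered by inclusion) into the 2-groupoid $\pi_{\leq 2} K(X)$. Since higher cells are trivial in this truncation, such a functor is pinned down by the 1-morphisms $q_{12}$, the 2-cells $q_{12}\cdot q_{23}\simeq q_{13}$, and pentagon coherence for quadruples. I would therefore first construct these data for nested chains inside $\mathcal{P}$, and then reduce the general case to the nested one by passing through a common upper bound.

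For an inclusion $(L_1,N_1)\subset (L_2,N_2)$ of good lattice pairs, I would use that the two-term quotient complex $[L_2/L_1 \to^{\bar\nabla} (N_2/N_1)\otimes \Omega^1_X(D)]$ sits in a short exact sequence of complexes whose other two terms are both canonically quasi-isomorphic to $Rj_*^{dR}\Ec$ by the good-lattice condition; consequently the quotient is acyclic and $\bar\nabla$ is an isomorphism of coherent $\mathcal{O}_X$-modules. The two honest short exact sequences in $\Coh(X)$,
\[
0 \to L_1 \to L_2 \to L_2/L_1 \to 0, \qquad 0 \to N_1\otimes \Omega^1_X(D) \to N_2\otimes \Omega^1_X(D) \to (N_2/N_1)\otimes \Omega^1_X(D) \to 0,
\]
together with $\bar\nabla$ identifying the third terms, then assemble into a 2-simplex of the Waldhausen $S_\bullet K(X)$, and this provides the path $q_{12}$. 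For nested triples and quadruples the relation $q_{12}\cdot q_{23}\simeq q_{13}$ and its pentagon coherence arise as faces of the corresponding $3$- and $4$-simplices of $S_\bullet K(X)$ via the standard simplicial identities.

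For arbitrary good lattice pairs $(L_1,N_1)$ and $(L_2,N_2)$, I would choose a third good lattice pair $(L_3,N_3)$ dominating both and define $q_{12}$ as the composite of the nested homotopies along the zigzag $(L_1,N_1)\subset (L_3,N_3)\supset (L_2,N_2)$. Independence of this auxiliary choice then follows by passing to a further common upper bound in $\mathcal{P}$ and applying the pentagon coherence already established for chains.

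The hardest step will be verifying that $\mathcal{P}$ is directed: the naive sum $(L_1+L_2,N_1+N_2)$ of two good lattice pairs need not itself be good, because the good-lattice condition is not preserved under arbitrary sums. I plan to resolve this by invoking Deligne's existence theorem \cite{MR0417174}: applied to the $\mathcal{O}$-coherent pair $(L_1+L_2,N_1+N_2)$ with a sufficiently positive twist by the divisor $D$, it produces a bona fide good lattice pair containing both $(L_i,N_i)$, which supplies the required upper bound in $\mathcal{P}$ and closes the argument.
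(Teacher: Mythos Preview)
Your argument has a genuine gap at the step where you assert that $\bar\nabla\colon L_2/L_1 \to (N_2/N_1)\otimes\Omega^1_X(D)$ is an isomorphism of coherent $\Oo_X$-modules. The connection $\nabla$ obeys the Leibniz rule $\nabla(fs)=f\nabla(s)+s\otimes df$, and this persists on the quotients: $\bar\nabla$ is $k$-linear but \emph{not} $\Oo_X$-linear in general (linearity would force $L_2\otimes\Omega^1_X\subset N_1\otimes\Omega^1_X(D)$, which is not part of the good-lattice hypothesis). Hence $\bar\nabla$ is not even a morphism in $\Coh(X)$, the two-term quotient ``complex'' is not a complex of $\Oo_X$-modules, and the $2$-simplex in $S_\bullet\Coh(X)$ you describe does not exist.

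The paper confronts exactly this obstacle. Since $L_2/L_1$ and $(N_2/N_1)\otimes\Omega^1_X(D)$ are torsion sheaves supported on the finite set $D$, one may work in $K(X,D)$ and invoke the equivalence $K(X,D)\simeq K(\Sh^f_{X,D}(k))$ with the $K$-theory of sheaves of finite-dimensional $k$-vector spaces supported on $D$. In the latter category $\bar\nabla$ \emph{is} a morphism, and one obtains in $\pi_{\leq 1}K(\Sh^f_{X,D}(k))$
\[
[L_2/L_1 \xrightarrow{\bar\nabla} (N_2/N_1)\otimes\Omega^1_X(D)]\ \simeq\ [L_2/L_1]\ominus[(N_2/N_1)\otimes\Omega^1_X(D)]\ \simeq\ [L_2/L_1 \xrightarrow{0} (N_2/N_1)\otimes\Omega^1_X(D)],
\]
with the left-hand side acyclic by the good-lattice condition, hence homotopic to $0$. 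Combining this with the $H$-group structure on $\pi_{\leq 2}K(X)$ (to reduce from comparing $[L_i\to^0 N_i\otimes\Omega^1_X(D)]$ to showing the difference is null) yields the desired $q_{12}$. Your overall architecture---reduce to nested pairs via filteredness of the poset of good lattice pairs, then handle coherence for chains---matches the paper's; only the $\Oo_X$-linearity claim must be replaced by this passage through $\Sh^f_{X,D}(k)$.
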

\begin{proof}
This proof is a facsimile of the proof of the existence of good epsilon lines (with the additional dimension of taking the $2$-categorical nature of $\pi_{\leq 2}$ into account). Without loss of generality (since the poset of good lattice pairs is filtered) we may assume that $(L_1,N_2) \subset (L_2,N_2) \subset (L_3,N_3)$. Using the $H$-group structure on $\pi_{\leq 2}K(X)$ we see that it suffices to construct a homotopy between $0 \in \pi_{\leq 2}K(X,Z)$ and $[L_{i+1}/L_i \to^0 N_{i+1}/N_i \otimes \Omega_X^1(D)]$.

By virtue of the definition of good lattice pairs, the complexes
$$[L_{i+1}/L_i \to^{\nabla} N_{i+1}/N_i \otimes \Omega_X^1(D)]$$
are acyclic for $i=1,2$. The map $\nabla$ is not $\Oo_X$-linear. However, we observe that $K(X,Z)$ is equivalent to the $K(\Sh^{f}_{X,Z}(k))$, where $\Sh^f_{X,Z}$ denotes the abelian category of sheaves of $k$-vector spaces on $X$ with support $Z$. In $\pi_{\leq 1}K(\Sh^f_{X,Z}(k))$ we have a homotopy
$$[L_{i+1}/L_i \to^{\nabla} N_{i+1}/N_i \otimes \Omega_X^1(D)] \simeq [L_{i+1}/L_i] \ominus [N_{i+1}/N_i \otimes \Omega_X^1(D)] \simeq [L_{i+1}/L_i \to^0 N_{i+1}/N_i \otimes \Omega_X^1(D)],$$
where $\ominus$ denotes subtraction with respect to the $H$-group structure on $\pi_{\leq 2}K(X)$. The left hand side is represented by an acylic complex, and therefore homotopic to $0$.
This concludes the proof of the claim.
\end{proof}
It remains to check that there exists a map $\pi_{\leq 2}K(\Loc(U)) \to \pi_{\leq 2}K(X)$, such that the diagram
\[
\xymatrix{
\Loc(U)^{\times} \ar[rd] \ar[d] & \\
\pi_{\leq 2}K(\Loc(U)) \ar[r] & \pi_{\leq 2}K(X)
 }
\]
commutes. This follows at once from the behaviour of lattice pairs with respect to short exact sequences.
\begin{claim}\label{claim:ses}
A short exact sequence of flat connections $\Ec' \hookrightarrow \Ec \twoheadrightarrow \Ec''$ on $U$ can be lifted to a short exact sequence of good lattice pairs:
\[
\xymatrix{
M' \ar@{^(->}[r] \ar[d]_{\nabla} & M \ar@{->>}[r] \ar[d]_{\nabla} & M'' \ar[d]_{\nabla} \\
N' \otimes \Omega^1_X(D) \ar@{^(->}[r] & N \otimes \Omega^1_X(D) \ar@{->>}[r] & N'' \otimes \Omega^1_X(D).
}
\]
\end{claim}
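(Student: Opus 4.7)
The plan is to exploit one-dimensionality of $X$ and build the middle lattice pair from a priori chosen good lattice pairs on the side terms. First I would invoke Deligne's theorem to fix good lattice pairs $(M', N')$ for $\Ec'$ and $(M'', N'')$ for $\Ec''$. Since $M$ and $N$ are forced on $U$ by $E$ and $E \otimes \Omega^1$, and $D$ consists of finitely many isolated closed points of $X$, constructing $(M, N)$ amounts to a local problem in a formal neighborhood of each $x \in D$. In such a neighborhood the short exact sequence of flat connections becomes a short exact sequence $V' \hookrightarrow V \twoheadrightarrow V''$ of $k((t))$-vector spaces, which splits as $k((t))$-modules (though not as connections), and with respect to any such splitting the connection takes block-upper-triangular form $\nabla = \bigl(\begin{smallmatrix} \nabla' & \phi \\ 0 & \nabla'' \end{smallmatrix}\bigr)$ for some $\Oo$-linear $\phi \colon V'' \to V' \otimes \Omega^1$.

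Next I would try the naive ansatz: locally set $M := M' \oplus M''$ and $N := N' \oplus N''$ via this splitting. The compatibility $\nabla(M) \subset N \otimes \Omega^1_X(D)$ reduces to the single condition $\phi(M'') \subset N' \otimes \Omega^1_X(D)$. If this fails for the initial choice, I would enlarge the good lattice pair $(M', N')$ so that $N'$ absorbs $\phi(M'')$ in a formal neighborhood of each point of $D$, invoking the cofinality of good lattice pairs inside all lattice pairs (a standard strengthening of Deligne's existence theorem: any lattice pair is contained in a good one). Gluing the locally defined $(M_x, N_x)$ with $(E, E \otimes \Omega^1)$ over $U$ then produces a global lattice pair $(M, N)$ for $\Ec$ fitting into the desired short exact sequence of lattice pairs with $(M', N')$ and $(M'', N'')$.

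The final step is to verify that the constructed middle pair $(M, N)$ is itself good. This should follow from the five lemma: the short exact sequence of two-term de Rham complexes $[M' \to N' \otimes \Omega^1_X(D)] \hookrightarrow [M \to N \otimes \Omega^1_X(D)] \twoheadrightarrow [M'' \to N'' \otimes \Omega^1_X(D)]$ yields a distinguished triangle in $\D(X)$ that maps naturally to the triangle $Rj^{dR}_*\Ec' \to Rj^{dR}_*\Ec \to Rj^{dR}_*\Ec''$; goodness of the outer pairs makes the outer maps quasi-isomorphisms, forcing the middle to be one as well. The main technical obstacle is the enlargement step arranging $\phi(M'') \subset N' \otimes \Omega^1_X(D)$ while preserving goodness of the pair for $\Ec'$; the rest is a formal consequence of the left exactness of $j_*$ and the cofinality of good lattice pairs.
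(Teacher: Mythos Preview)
Your argument is correct and takes a genuinely different route from the paper's. The paper reduces to a trait by formal descent, then invokes the Levelt--Turrittin classification: after handling the split case, it reduces to $\Ec'$ and $\Ec''$ indecomposable with a non-split extension, identifies the sequence (after a finite \'etale cover) with a twist $\mathcal{L}\otimes(\Ec_{(n-1)}\hookrightarrow\Ec_{(n)}\twoheadrightarrow\Ec_{(i)})$ of the standard Jordan-block filtration, and writes down the good lattice pairs $(L^{\oplus\ell},L(\mathrm{irr}(\mathcal{L}))^{\oplus\ell})$ by hand. Your approach bypasses this classification entirely: you split the sequence as $\Oo$-modules, absorb the off-diagonal term $\phi(M'')$ by enlarging the target lattice pair for $\Ec'$, and then certify goodness of the middle pair via a two-out-of-three argument on the triangle mapping to $Rj^{dR}_*\Ec'\to Rj^{dR}_*\Ec\to Rj^{dR}_*\Ec''$. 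The trade-off is that the work migrates into the cofinality statement for good lattice pairs, which you correctly isolate as the crux; it is true, but proving it from scratch essentially reintroduces the same formal-local input (Deligne's construction or Levelt--Turrittin) that the paper uses directly. What your argument buys is portability---it would adapt to any setting where one knows cofinality but lacks a full structure theorem---and a cleaner verification of goodness via the five lemma, whereas the paper's approach yields explicit lattice pairs at the cost of a case analysis.
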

\begin{proof}
By virtue of formal descent we may assume that $X$ is a trait. Without loss of generality we assume that $X = \Spec k[[t]]$ and $U = \Spec F$ where $F = k((t))$. For split short exact sequences the assertion is obvious. It suffices therefore to prove the claim when $\Ec'$ and $\Ec''$ are indecomposable and the short exact sequence is non-split. It then follows from the Levelt--Turritin decomposition (see \cite{levelt}) that there exists a finite \'etale morphism $q\colon \Spec F' \to \Spec F$ (of generic points of traits), such that the short exact sequence is given by the push-forward $q_*$ applied to the short exact sequence 
$$\mathcal{L}\otimes (\Ec_{(n-1)} \hookrightarrow \Ec_{(n)} \to \Ec_{(i)})$$
where $\Ec_{(n)}$ is the flat connection $(\Oo_{F'}, d + \frac{J_{(n)}}{z}dz)$ (where $J_{(n)}$ is an $(n\times n)$-Jordan block), and $\mathcal{L}$ is a rank $1$ flat connection on $\Spec F'$.

We can then construct the short exact sequence of good lattice pairs by pushing-forward the pairs $(L^{\oplus \ell},L(\mathsf{irr}(\mathcal{L}))^{\oplus \ell})$ for $\ell = n-i,n,i$, where $L \subset \mathcal{L}$ is a Deligne lattice and $\mathsf{irr}(\mathcal{L})$ denotes the irregularity of $\mathcal{L}$.
\end{proof}

\begin{rmk}
The maps $\gamma_0$ and $\gamma_{\nu}$ can also be defined on the full $K$-theory spectrum. However, this is technical and more than what we need for the purpose of this subsection.
\end{rmk}

This concludes the construction of the map $\gamma_0$. We now briefly turn to $\gamma_{\nu}$: as in the proof of Claim \ref{claim:nu0} one verifies that there is a well-defined map $\Loc(U)^{\times} \to \pi_{\leq 2}K(X,Z)$ given by $\Ec \mapsto [L_1 \to^{\nu} N_1 \otimes \Omega_X^1(D)]$. At first we remark that the complex $[L_1 \to^{\nu} N_1 \otimes \Omega_X^1(D)]$ is acyclic when restricted to $U$, since $\nu(U) \cap S = \emptyset$. Therefore it defines indeed a point in $\pi_{\leq 2}K(X,Z)$. Furthermore, we may assume that $(L_1,N_2) \subset (L_2,N_2) \subset (L_2,N_3)$. As in the proof of Claim \ref{claim:nu0} we see that in $\pi_{\leq 1}K(\Sh^f_{X,Z}(k))$ 
$$[L_2/L_1 \to^{\nu} N_2/N_1 \otimes \Omega_X^1(D)] \simeq [L_2/L_1] \ominus [N_2/N_1 \otimes \Omega^1_X(D)] \simeq [L_2/L_1 \to^{\nabla} N_2/N_1 \otimes \Omega_X^1(D)] \simeq 0.$$
The compatibility of good lattice pairs with short exact sequences (Claim \ref{claim:ses}) implies the assertion.
\end{proof}

\begin{lemma}\label{lemma:missing_link}
There are commutative diagrams of Picard groupoids
\[
\xymatrix{
\pi_{\leq 2}K(\Loc(U)) \ar[r]^-{(\pi^*)^{-1}} \ar[rd]_{\gamma_{0}} & \pi_{\leq 2}K(X,D) \ar@{=}[d] \\
& \pi_{\leq 2}K(X,D),
}
\]
and 
\[
\xymatrix{
\pi_{\leq 2}K(\Loc(U)) \ar[r]^-{\Ec_{\nu}} \ar[rd]_{\gamma_{\nu}} & \pi_{\leq 2}K(X,D) \ar@{=}[d] \\
& \pi_{\leq 2}K(X,D).
}
\]
\end{lemma}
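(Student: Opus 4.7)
My plan is to compute Patel's compositions $(\pi^*)^{-1} \circ Q$ and $\phi_\nu \circ Q_S$, precomposed with $j^{dR}_*\colon K(\Loc(U)) \to K(D_X)$, on an arbitrary flat connection $\Ec = (E, \nabla)$, by means of an explicit filtered representative of $j^{dR}_*\Ec$ coming from a good lattice pair, and then to match the result with the complexes defining $\gamma_0$ and $\gamma_\nu$.

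Given a good lattice pair $(L, N)$ for $\Ec$, the key observation is that the two-term complex $[L \to^{\nabla} N \otimes \Omega_X^1(D)]$, which represents $j^{dR}_*\Ec$ in the derived category, carries a natural filtration placing $L$ in filtration degree $0$ and $N \otimes \Omega_X^1(D)$ in filtration degree $1$, with respect to which $\nabla$ is a first-order differential operator. I would verify that this is a good filtration in Quillen's sense, and that its associated graded on $T^*X$ is the two-term complex of coherent sheaves $[\pi^*L \to \pi^*(N \otimes \Omega_X^1(D))]$ whose differential is $\Oo_{T^*X}$-linear multiplication by the principal symbol of $\nabla$; concretely, this symbol is given by pairing with the tautological section $\tau \in \Gamma(T^*X, \pi^*\Omega_X^1)$.

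For the first diagram, applying $(\pi^*)^{-1}$ amounts, on $\pi_{\leq 2}K$, to pullback along the zero section $s_0\colon X \to T^*X$, since $(\pi^*)^{-1}$ is inverse to $\pi^*$ and $s_0$ is a section of $\pi$. Because $s_0^*\tau = 0$, the resulting class is represented by $[L \to^{0} N \otimes \Omega_X^1(D)]$, which is $\gamma_0(\Ec)$ by Lemma-Definition \ref{lemma-defi:comparison}(a). For the second diagram, unpacking the fiber construction defining $\phi_\nu$ reduces to applying $\nu^*$ to the same associated graded; since $\nu^*\tau = \nu$ by the defining property of the tautological form, the resulting class is represented by $[L \to^{\nu} N \otimes \Omega_X^1(D)]$, which is acyclic on $U$ because $\nu(U)$ avoids the singular support of $\Ec$ and hence defines a point of $\pi_{\leq 2}K(X, D)$ equal to $\gamma_\nu(\Ec)$ by Lemma-Definition \ref{lemma-defi:comparison}(b).

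These object-level identifications must finally be promoted to morphisms of Picard groupoids. Compatibility with short exact sequences on the Patel side is additivity of Quillen's symbol map on strict filtered short exact sequences, which matches Claim \ref{claim:ses} on the $\gamma$ side; independence from the choice of good lattice pair is handled on both sides through filtered refinements combined with the transitivity Lemma \ref{lemma:transitivity}. The hardest part, I expect, will be verifying that the above filtration is compatible with Patel's refinement $Q_S$ carrying the singular support condition: this should reduce to checking that the symbol $\sigma(\nabla)$ degenerates exactly on the union of $S$ with the zero section, so the associated-graded class lies in $K(T^*X, S)$ as needed.
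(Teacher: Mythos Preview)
Your approach is genuinely different from the paper's. The paper does not attempt a direct symbol computation. Instead, for the second diagram it passes to the formal completion $\widehat{X}_D$ (using excision for both $\Ec_{\nu}$ and $\gamma_{\nu}$), invokes the Levelt--Turritin decomposition to reduce to push-forwards of rank-one connections on a trait, and then computes $Q_S$ there with the explicit good filtration $(j_*\mathcal{L})_{\leq m} = \Oo_{F'}(m(i+1))$. For the first diagram it uses formal descent for $K$-theory to reduce again to the trait case, comparing two commutative squares by cutting them into triangles and identifying both with the functor $F\colon (E,\nabla)\mapsto [E\to^{0}E\otimes\Omega^1]\otimes\widehat{\Oo}_{X,D}$. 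Your direct strategy, if it works, is more conceptual and avoids the classification theorem; the paper's strategy avoids having to produce a global filtered $D$-module model.

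There is, however, a real gap in your plan. The complex $[L \to^{\nabla} N\otimes\Omega^1_X(D)]$ is a complex of $\Oo_X$-modules with a non-$\Oo_X$-linear differential; it represents $Rj_*^{dR}\Ec$ \emph{as a complex of sheaves computing de Rham cohomology}, not as an object of $\Perf(D_X)$. Quillen's $Q$ and Patel's $Q_S$ require a good filtration on a (complex of) $D$-modules, and your sentence ``placing $L$ in filtration degree $0$ and $N\otimes\Omega^1_X(D)$ in filtration degree $1$'' does not give that: neither $L$ nor $N\otimes\Omega^1_X(D)$ carries a $D_X$-action. What you presumably want is the Spencer-type resolution by \emph{induced} right $D$-modules
\[
L\otimes_{\Oo_X} D_X \longrightarrow (N\otimes\Omega^1_X(D))\otimes_{\Oo_X} D_X,
\]
with the differential built from $\nabla$ and the right $D$-action, equipped with the order filtration shifted by one on the target. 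Its associated graded is indeed $[\pi^*L \to^{\tau} \pi^*(N\otimes\Omega^1_X(D))]$ as you claim. But you then must verify that the cone of this map of $D$-modules is genuinely $j_*\Ec$ (equivalently $j_*(E\otimes\omega_U)$ on the right side), not merely something that agrees with it over $U$; this is where the \emph{good} lattice pair hypothesis must enter, and it is not automatic for an arbitrary lattice pair. Without this step, you have computed $Q$ on the wrong $K$-class. Once this is in place, your identification of $\nu^*$ and $s_0^*$ of the associated graded is fine, and the passage to $K(T^*X,S)$ is as you say the remaining issue: you must check that the symbol complex has support exactly $S$ (plus the zero section), which again uses the good lattice condition.
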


\begin{proof}
We will verify these statements after formal completion at $D$ (that is, after replacing $X$ by the disjoint union of traits $\widehat{X}_D$). 
For the second diagram this is sufficient by the excision property of $\Ec_{\nu}$ and $\gamma_{\nu}$. We can use the Levelt--Turritin decomposition (see \cite{levelt}) to analyse the morphism $\Ec_{\nu}\colon \pi_{\leq 1}K(\Loc(U)) \to K(X,D)$.
Henceforth we assume that $X$ is a trait and $U = \Spec F$ where $F = k((t))$. There exists a finite \'etale extension $q\colon\Spec F' \to \Spec F$, such that in $\pi_{\leq 1}(\Loc(U))$ we have a homotopy 
$$\Ec \simeq q_*(\mathcal{L})^{\oplus n}$$
where $\mathcal{L}$ is a rank $1$ connection on $\Spec F'$. We observe that $Q_S$ and $\Ec_{\nu}$ and $\gamma_{\nu}$ commute with the pushforward $q_*$ (as good lattice pairs do, and by virtue of \cite[Corollary 3.30]{MR2981817}). The $D$-module $j_*\mathcal{L}$ on $\Spec \Oo_{F'} = \Spec k'[[t']]$ is endowed with a good filtration 
$(j_*\mathcal{L})_{\leq m} = \mathcal{O}_{F'}(m (i+1)),$
where $i$ denotes the irregularity of $\mathcal{L}$. A simple computation using this filtration to compute the value of $Q$, concludes the proof of the second assertion.

For the first diagram we use the following well-known property of algebraic $K$-theory.
\begin{claim}[Formal descent for algebraic $K$-theory]
The commutative square of spectra
\[
\xymatrix{
K(X) \ar[r]^{j^*} \ar[d] & K(U) \ar[d] \\
K(\widehat{X}_D) \ar[r]^{\widehat{j}^*} & K(\widehat{X}_D \times_X U)
}
\]
is cartesian.
\end{claim}
\begin{proof}
It suffices to show that the induced map of fibres $\fib(j^*) \to \fib(\widehat{j}^*)$ is an equivalence. Thomason--Trobaugh's localisation theorem implies that this map is homotopic to the natural morphism 
$$K(X,D) \to K(\widehat{X}_D,D).$$
According to \cite[Theorem 2.6.3(d)]{MR1106918} this is an equivalence.
\end{proof}
In order to conclude the assertion, we have to show that the following commutative diagrams of $2$-Picard groupoids are equivalent:
\[
\xymatrix{
\pi_{\leq 2}K(\Loc(U)) \ar@{-->}[rd] \ar[r]^{\gamma_0} \ar[d]_{\gamma_0} & \pi_{\leq 2}K(U) \ar[d]^{f^*} &  & \pi_{\leq 2}K\Loc(U) \ar[r]^{(\pi^*)^{-1}} \ar@{-->}[rd] \ar[d]_{(\pi^*)^{-1}} & \pi_{\leq 2}K(U) \ar[d]^{f^*} \\
\pi_{\leq 2} K(\widehat{X}_D) \ar[r]^{\widehat{j}^*} & \pi_{\leq 2}K(\widehat{X}_D \times_X U) & &\pi_{\leq 2} K(\widehat{X}_D) \ar[r]^{\widehat{j}^*} & \pi_{\leq 2}K(\widehat{X}_D \times_X U).
}
\]
In order to construct such an equivalence we ``cut" the commutative squares into two halves along the dashed arrow, as indicated in the diagram above. We will then compare these halves individually. We claim that we have a two equivalent commutative triangles of Picard $2$-groupoids
\[
\xymatrix{
\pi_{\leq 2}K(\Loc(U)) \ar[rd]_F \ar[r]^{\gamma_0} & \pi_{\leq 2}K(U) \ar[d]^{f^*} &  & \pi_{\leq 2}K\Loc(U) \ar[r]^{(\pi^*)^{-1}} \ar[rd]_F & \pi_{\leq 2}K(U) \ar[d]^{f^*} \\
 & \pi_{\leq 2}K(\widehat{X}_D \times_X U) & &  & \pi_{\leq 2}K(\widehat{X}_D \times_X U),
}
\]
where $F\colon D^b(\Loc(U)) \to \Perf(\widehat{X}_D \times_X U)$ is the exact functor sending $(E,\nabla)$ to $[E \to^0 E \otimes \Omega_X^1] \otimes_{\Oo_U} \widehat{\Oo}_{X,D}$. For the left hand side this follows from the definition of $\gamma_0$, for the right hand side this is a consequence of $(\pi^*)^{-1} \simeq i_0^*$, where $i_0 \colon X \to T^*X$ denotes the zero section. The same argument provides a homotopy between $\gamma_0$ and $(\pi^*)^{-1}$ which extends to a comparison of the two commutative diagrams. Mutatis mutandis we compare the remaining two commutating triangles.
\end{proof}

\begin{proof}[Proof of Theorem \ref{thm:comparison}]
For a holonomic $D$-module $M$ on $X$, there exists an open subset $j\colon U \hookrightarrow X$, such that $M|_U \simeq \Ec$ is a vector bundle with a flat connection. The fibre of the map $M \to j_*\Ec$ is a complex of $D$-modules with support on $D = X\setminus U$. Thus it suffices to prove the theorem for the $D$-module $j_*\Ec$.

Applying the determinant of cohomology, Lemma \ref{lemma:missing_link} yields a commutative diagram of Picard groupoids.
\[
\xymatrix{
\pi_{\leq 1}K(\Loc(U)) \ar[r]^-{\varepsilon^P_{\nu}} \ar[rd]_{\grdet(R\Gamma(\gamma_{\nu}))} & \grPic(k) \ar@{=}[d] \\
& \grPic(k).
}
\]
It remains to show that $\grdet(R\Gamma(X,\gamma_{\nu}))$ is isomorphic to $\tilde{\varepsilon}^{BBE}(X,\Ec)$. This is a consequence of Corollary \ref{cor:good}. 
\end{proof}

\subsection{The epsilon connection}

In \cite{MR1988970} Beilinson--Bloch--Esnault study de Rham epsilon lines in a more general setting. They admit $S$-families of smooth curves $X$ (where $S$ is a $k$-scheme), and consider $S$-families of irregular flat connections over $U_S \subset X_S$, satisfying an admissibility condition. Following \emph{loc. cit.} we refer to such $S$-families as being \emph{epsilon-nice} The epsilon line is computed with respect to a relative $1$-form $\nu \in \Omega^1_{X_S/S}(U_S)$. Studying the variation of epsilon lines in dependence of $\nu$, the so-called epsilon connection on $\tilde{\varepsilon}_{\nu}(E,\nabla)$ is defined. Recasting flat connections in terms of crystals, the epsilon connection amounts to the following invariance property:
\begin{theorem}[Beilinson--Bloch--Esnault]
Let $(E,\nabla)$ be an \emph{epsilon-nice} $S$-family of flat connections on $U_S \subset X_S \to S$. Let $A$ be a commutative $k$-algebra, $\Spec A \to S$, such that we have two sections $\nu_1,\nu_2\in \Omega_{X_A/A}^1(U)$. If $(\nu_1)_{A^{\red}} = (\nu_2)_{A^{\red}}$ then there exists an isomorphism 
$$c_{21}\colon \tilde{\varepsilon}_{\nu_1}(E,\nabla) \simeq \tilde{\varepsilon}_{\nu_2}(E,\nabla).$$
For every triple $\nu_1,\nu_2,\nu_3 \in \Omega_{X_A/A}^1(U)$ satisfying $(\nu_1)_{A^{\red}} = (\nu_2)_{A^{\red}} = (\nu_3)_{A^{\red}}$ we have $c_{32} \circ \nu_{21} = c_{31}$.
\end{theorem}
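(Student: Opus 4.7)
The plan is to use the explicit formula from Corollary~\ref{cor:good} together with the observation that the ratio $u := \nu_2/\nu_1$ lies in $\Oo_{U_A}^\times$ and satisfies $u \equiv 1 \pmod{\mathrm{nil}(A)}$.

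First, I would fix an $A$-relative good lattice pair $(L, N)$ for $(E, \nabla)$ on $X_A$ (whose existence extends Deligne's theorem by flat base change). By Corollary~\ref{cor:good},
$$\tilde\varepsilon_{\nu_i}(X_A, (E,\nabla)) \simeq \grdet(L : M_i), \qquad M_i := \nu_i^{-1}\bigl(N \otimes \Omega^1_{X_A/A}(D)\bigr) \subset j_*E.$$
Multiplication by $u^{-1} \in \Oo_{U_A}^\times$ defines an $\Oo_{X_A}$-linear automorphism $m_{u^{-1}}$ of $j_*E$ ($\Oo_{X_A}$-linearity holds because multiplication by sections of $\Oo_{U_A}$ commutes with the $\Oo_{X_A}$-action on $j_*E$, and invertibility because $u$ is a unit on $U_A$). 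A direct computation using $\nu_2 = u \nu_1$ shows that $m_{u^{-1}}$ carries $M_1$ isomorphically onto $M_2$: for $s \in j_* E$, one has $\nu_2 \cdot (u^{-1} s) = \nu_1 s$, so $s \in M_1 \iff u^{-1} s \in M_2$.

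Applying $m_{u^{-1}}$ to the relative graded determinant yields $\grdet(L : M_1) \xrightarrow{\simeq} \grdet(u^{-1} L : M_2)$, and by the transitivity property (Lemma~\ref{lemma:transitivity}) the construction of $c_{21}$ reduces to a canonical trivialization of $\grdet(u^{-1}L : L)$. Writing $u^{-1} = 1 + w$ with $w \in \Oo_{U_A}$ having nilpotent $A$-coefficients, the lattices $u^{-1} L$ and $L$ agree modulo $\mathrm{nil}(A) \cdot j_*E$, so the quotients $L/(L \cap u^{-1}L)$ and $u^{-1}L/(L \cap u^{-1}L)$ are perfect $A$-modules annihilated by a power of $\mathrm{nil}(A)$ and supported on $D$. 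Their graded determinants are trivialized canonically via the Koszul-type resolution determined by $w$, giving a distinguished unit in $1 + \mathrm{nil}(A) \subset A^\times$ (a ``Fredholm determinant'' of $1+w$ which terminates because $w$ is nilpotent) and hence the required trivialization of $\grdet(u^{-1}L : L)$.

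For a triple of sections with common reduction, the multiplicativity $u_{13} = u_{23} \cdot u_{12}$ (where $u_{ij} := \nu_j/\nu_i$) translates to the multiplicativity of the associated canonical trivializations, yielding the cocycle condition $c_{31} = c_{32} \circ c_{21}$. The main obstacle is verifying that the trivialization of $\grdet(u^{-1}L : L)$ is genuinely canonical---independent of the auxiliary good lattice pair $(L, N)$ and compatible with base change along maps $A' \to A$---which requires a careful analysis of the infinitesimal deformation theory of lattices along the lines of Beilinson--Bloch--Esnault's original construction of the epsilon connection in \cite{MR1988970}.
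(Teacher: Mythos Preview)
Your reduction matches the paper's exactly: write $\nu_2 = f\nu_1$ (your $u=f$), use transitivity to obtain
\[
\tilde\varepsilon_{\nu_2}(\Ec)\simeq \tilde\varepsilon_{\nu_1}(\Ec)\otimes \grdet(f),\qquad \grdet(f)=\grdet(f^{-1}L:L),
\]
and then seek a canonical trivialisation of $\grdet(f)$ under the hypothesis $f\equiv 1 \pmod{\mathrm{nil}(A)}$.

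Where you diverge is at this last step. The paper does not give an argument here; it simply records that Beilinson--Bloch--Esnault obtain the crystal structure on $\grdet(f)$ via a representation-theoretic argument (Clifford theory) which produces a $\mu_2$-reduction of the associated super line \cite[3.5]{MR1988970}. Your proposed route is instead an elementary ``Fredholm determinant'' of $1+w$, exploiting nilpotence of $w=u^{-1}-1$.

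That alternative is attractive but, as you yourself flag, genuinely incomplete. The difficulty is not just independence of $(L,N)$: multiplication by $w$ on $L$ (or on $j_*E$) is an operator on a module of infinite rank over $A$ (think $L\cong A[[t]]^r$ on a trait), and $w$ may have poles along $D$, so neither ``trace of $w$'' nor ``$\det(1+w)$'' has an a priori meaning. Making sense of this requires exactly the kind of Tate/Clifford formalism that Beilinson--Bloch--Esnault invoke; their $\mu_2$-reduction is what supplies the canonical trivialisation and the cocycle identity in one stroke. So your proposal is the same shape as the paper's outline up to the point where the real content lives, and at that point the paper defers to \cite{MR1988970} rather than to an elementary computation.
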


In this subsection we recall the construction of this epsilon-crystal in the special case of a constant family of curves $X_k \times_{\Spec k} S$, and of an irregular flat connection $(E,\nabla)/U_k$. We restrict to this case, in order to simplify notation, and as our main interest lies in varying the rational $1$-form $\nu$.

\begin{situation}\label{situation:relativeBBE}
Let $X$, $U$, $M$, $(E,\nabla)$ be as in Situation \ref{situation:BBE}. We let $A$ be a commutative $k$-algebra, and $\nu \in \Omega^1_{X_A/A}$ a nowhere vanishing section. 
\end{situation}

\begin{lemma-definition}\label{lemma-defi:relativeBBE} 
\begin{enumerate}
%
%
\item[(a)] We choose a pair of good lattices $(L,N)$ for $(E,\nabla)$ and define
$$\tilde{\varepsilon}_{\nu}(E,\nabla) = \grdet(L_A:\nu^{-1}(N_A \otimes \Omega_X^1)) \otimes \left(\grdet(R\Gamma(\DR(L,N))\right)_A^{-1} \otimes (\grdet(R\Gamma_{dR}(U,E,\nabla)))_A.$$

\item[(b)] The de Rham epsilon line of $M$ is defined to be the graded line $\varepsilon_{\nu}(M) = \tilde{\varepsilon}_{\nu}(E,\nabla) \otimes \grdet(R\Gamma_{dR,D}(M)).$
\end{enumerate}
\end{lemma-definition}

\begin{proof}
Mutatis mutandis this is the same argument as in Lemma-Definition \ref{lemma-defi:BBE} to show that the de Rham epsilon line is well-defined.
\end{proof}

In order to study the dependence of the epsilon line on infinitesimal changes of $\nu$; we assume that $X$ is a trait. 

\begin{situation}\label{situation:relativeBBE2}
We assume that $X = \Spec k'[[t]]$ is a trait. Let $\nu_1, \nu_2, \nu_3 \in \Omega^1_{A((t))/A}$, such that $(\nu_1)_{A^{\red}} = (\nu_2)_{A^{\red}} = (\nu_3)_{A^{\red}}$. We denote by $f$ the element in $A((t))^{\times}$, such that $\nu_2 = f\nu_1$.
\end{situation}

We denote by $\grdet(f)$ the relative determinant
$$\grdet(f^{-1}L:L)$$
where $L \subset j_*E$ is an arbitrary lattice. The transitivity property (Lemma \ref{lemma:transitivity}) yields
$$\tilde{\varepsilon}_{f\nu}(\Ec) \simeq \tilde{\varepsilon}_{\nu}(\Ec) \otimes \grdet(f).$$
Using a representation-theoretic argument based on Clifford theory, Beilinson--Bloch--Esnault showed in \cite[3.5]{MR1988970} that the super line associated to the graded line $\grdet(f)$ admits a $\mu_2$-reduction, and hence in particular a crystal structure over $\Spec A$. This induces a crystal structure on the epsilon line.

\section{Generalisations of de Rham epsilon factors}

The goal of this section is to generalise Patel's construction (see \cite{MR2981817}, and \ref{Patel} for a summary) in two different directions. As it is, the epsilon factor takes values in $\Kb(X,D)$. If neither $X$ nor $D$ are proper, we can not apply the derived pushforward $R\Gamma(X,-)$ to obtain an epsilon factor in $\Kb(k)$ and thus an epsilon line.

Our first goal is therefore to modify Patel's definition, so that we obtain epsilon factors taking values in $\Kb(X,Z)$ where $Z \subset X$ is a zero-dimensional closed subset of $X$. This requires working with $n = \dim X$ rational differential forms $\nu_1,\dots,\nu_n$ whose domains of definition cover $X \setminus Z$ and are sufficiently generic (in particular, rationally linearly independent). This formalism of epsilon factors hinges on the $\Ab^1$-homotopy invariance of $K$-theory.

Secondly, we want to study how these new epsilon factors vary in dependence of the $n$-tuple of $1$-forms $(\nu_1,\dots,\nu_n)$. We therefore replace the field $k$ by a commutative $k$-algebra $A$, and for every holonomic $D$-modules $M$ on $X$ we define a so-called epsilon connection over the space ${\bf\Omega}$ of admissible $n$-tuples of $1$-forms $(\nu_1,\dots,\nu_n)$. This is the content of Section \ref{epsilon}.

\subsection{Epsilon factors supported on points}

In general, we expect Patel's epsilon factor to be supported on a closed subset $D \subset X$ of codimension $1$. Using several $1$-forms at once, we can replace $D$ by a smaller closed subset $Z$, even of codimension $n$.

Let $I$ be a non-empty finite ordered set. We denote by $\Ab^{\Delta_{I}}$ the affine space of dimension $|I|-1$ defined by the linear equation 
$$\sum_{i \in I} \lambda_i = 1.$$

\begin{situation}\label{situation:tuple}
Let $X$ be a smooth $n$-dimensional $k$-variety and $Z \subset X$ a closed subset. Let $S \subset T^*X$ be a closed subset. We denote the complement of $Z$ by $U$. Consider an open covering $U = \bigcup_{i=1}^m U_i$, and for each $i=1,\dots, m$ a nowhere vanishing $1$-form $\nu_i \in \Omega_X^1(U_i)$, such that for each ordered subset $\{i_1 < \cdots < i_{\ell}\} \subset \{1,\dots, m\}$ we have for $U_{i_1\dots i_{\ell}} = \bigcap_{j=1}^{\ell} U_{i_j}$ that the image of the morphism $\nu_{\Delta}((U_{i_1 \dots i_{\ell}})_A) \cap S_A = \emptyset$, where $\nu_{\Delta}\colon U_{i_1\dots i_{\ell}} \times \Ab^{\Delta} \to T^*X$, $\nu_{\Delta}(\lambda_1,\dots,\lambda_{\ell}) =  \sum_{j= 1}^{\ell} \lambda_j \nu_{i_j}$.
\end{situation}

Henceforth we denote an $m$-tuple of forms $(\nu_1,\dots,\nu_m)$ as in Situation \ref{situation:tuple} by $\underline{\nu}$. Mimicking Patel's definition, we introduce for every closed subset $S \subset T^*X$ a map $\phi_{\underline{\nu}}\colon \Kb(T^*X,S) \to \Kb(X,Z)$ in Lemma \ref{lemma:phi}, and define $\Ec_{\underline{\nu}} = \phi_{\underline{\nu}} \circ Q_S$.

\begin{lemma-definition}\label{lemma:phi}
There exists a morphisms $(\underline{\nu})^*\colon \Kb(T^*X \setminus S) \to \Kb(U)$, such that for all $i=1,\dots,m$ we have a commutative diagram
\[
\xymatrix{
\Kb(T^*X \setminus S) \ar[r]^-{(\underline{\nu})^*} \ar[rd]_{\nu_i^*} & \Kb(U) \ar[d] \\
& \Kb(U_i).
}
\]
Furthermore, this morphism satisfies $(\underline{\nu})^*\circ\pi^* \simeq \id$.
\end{lemma-definition}

\begin{proof}[Sketch]
As a warm-up we treat the case $m=2$. The idea is to show that on $U_{12} = U_1 \cap U_2$ we can construct a linear homotopy $\nu_t\colon U_{12} \times \Ab^1 \to T^*U_{12}$ between the sections $\nu_1|_{U_{12}}$ and $\nu_2|_{U_{12}}$ by $\nu_t = (1-t) \cdot{} \nu_1 + t \cdot{} \nu_2$ for $t \in \Ab^1$. In this step we use that $\nu_1$ and $\nu_2$ are linearly independent on $U_{12}$.
\begin{claim}
There is a commutative diagram
\[
\xymatrix{
\Kb(T^*X) \ar[r]^{\nu_1^*} \ar[d]_{\nu_2^*} & \Kb(U_1) \ar[d] \\
\Kb(U_2) \ar[r] & \Kb(U_{12})
}
\]
of spectra.
\end{claim}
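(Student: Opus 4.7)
The plan is to use the linear interpolation between $\nu_1$ and $\nu_2$ over $U_{12}$ together with the $\Ab^1$-homotopy invariance of $K$-theory for regular schemes. Concretely, since $T^*X \to X$ is a vector bundle, over $U_{12}$ both $\nu_1|_{U_{12}}$ and $\nu_2|_{U_{12}}$ are sections of $T^*U_{12}$, and the formula $\tilde{\nu}(x,t) = (1-t)\nu_1(x) + t\nu_2(x)$ defines a morphism of schemes
$$\tilde{\nu}\colon U_{12} \times \Ab^1 \longrightarrow T^*X.$$
Pullback along $\tilde{\nu}$ yields a morphism of spectra $\tilde{\nu}^*\colon \Kb(T^*X) \to \Kb(U_{12} \times \Ab^1)$.

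The second ingredient is that $U_{12}$ is an open subscheme of the smooth $k$-variety $X$, hence regular. The projection $\pi_{12}\colon U_{12} \times \Ab^1 \to U_{12}$ therefore induces an equivalence $\pi_{12}^*\colon \Kb(U_{12}) \to^{\simeq} \Kb(U_{12} \times \Ab^1)$ by the homotopy invariance of algebraic $K$-theory (which is already used in the proof of $\pi^*\colon \Kb(X) \to \Kb(T^*X)$ being an equivalence). The two canonical sections $i_0, i_1\colon U_{12} \to U_{12} \times \Ab^1$ of $\pi_{12}$ satisfy $i_0^* \circ \pi_{12}^* \simeq \id \simeq i_1^* \circ \pi_{12}^*$, and since $\pi_{12}^*$ is an equivalence we deduce a canonical homotopy $i_0^* \simeq i_1^*$ of maps $\Kb(U_{12} \times \Ab^1) \to \Kb(U_{12})$.

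Finally we identify the two compositions in the square as $i_0^* \circ \tilde{\nu}^*$ and $i_1^* \circ \tilde{\nu}^*$. Indeed $\tilde{\nu} \circ i_0\colon U_{12} \to T^*X$ is precisely the inclusion $U_{12} \hookrightarrow U_1$ followed by $\nu_1$, so $i_0^* \circ \tilde{\nu}^*$ is the composition $\Kb(T^*X) \to^{\nu_1^*} \Kb(U_1) \to \Kb(U_{12})$; the other case is symmetric. Combined with the homotopy $i_0^* \simeq i_1^*$ from the previous paragraph, this produces the desired homotopy making the square commute in the $\infty$-category of spectra. The only subtlety (which I would be most careful about) is to extract the homotopy as data within the $\infty$-categorical formalism rather than as a mere existence statement on $\pi_0$; concretely one records the homotopy as the datum of the single map $\tilde{\nu}^*$ together with the contractibility of the space of inverses to the equivalence $\pi_{12}^*$, which produces a canonical path between its two sections $i_0^*$ and $i_1^*$.
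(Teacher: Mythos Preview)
Your proposal is correct and follows essentially the same argument as the paper: both construct the linear homotopy $\nu_t = (1-t)\nu_1 + t\nu_2\colon U_{12}\times\Ab^1 \to T^*X$, invoke $\Ab^1$-invariance of $K$-theory of the regular scheme $U_{12}$ to identify $i_0^*\simeq i_1^*$ (each being inverse to $\pi_{12}^*$), and deduce $(\nu_1|_{U_{12}})^*\simeq(\nu_2|_{U_{12}})^*$. Your additional remark about extracting the homotopy as $\infty$-categorical data is a welcome precision but does not differ in substance from the paper's treatment.
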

\begin{proof}
Since this diagram is a square, it suffices to verify that it is homotopy commutative, that is, $(\nu_1|_{U_{12}})^* \simeq (\nu_2|_{U_{12}})^*$. We have a commutative diagram of regular schemes
\[
\xymatrix{
U_{12} \times \{0\} \ar[rd]_{i_0} \ar@/^/[rrd]^{\nu_0} & & \\
& U_{12} \times \Ab^1 \ar[r]^{\nu_t} & T^*X \\
U_{12} \times \{1\} \ar[ru]^{i_1} \ar@/_/[rru]_{\nu_1} & &
}
\]
Since the $K$-theory of regular schemes is $\Ab^1$-invariant, and $i_0, i_1$ are sections of $\pi\colon U_{01} \times \Ab^1 \to U_{01}$, we have that pullback along $i_0$ and $i_1$ are inverse to $\pi^*$. Therefore, $\nu_0^* \simeq i_0^* \circ \nu_t \simeq (\pi^*)^{-1} \circ \nu_t \simeq i_1^* \circ \nu_t^* \simeq \nu_1^*$. 
\end{proof}

The proof of the general case follows the same idea. For $m \geq 3$ we have to produce a coherent system of linear homotopies. We don't explain this in detail, as a similar (and in fact more general) construction is performed in Subsection \ref{sub:relative}.

By Thomason--Trobaugh \cite[Theorem 8.1]{MR1106918}, algebraic $K$-theory satisfies Zariski descent, the system of coherent homotopies alluded to above yields the descent datum for a morphism $\Kb(T^*X \setminus S) \to \Kb(U)$. The assertion $(\underline{\nu})^*\circ(\pi^*) \simeq \id$ can be checked Zariski-locally, and therefore holds by construction.
\end{proof}

We now define $\Ec_{\underline{\nu}}\colon \Kb(D^{\nu}) \to \Kb(X,Z)$ by mimicking Patel's definition surveyed in \ref{Patel}. By construction we have a commutative diagram
\[
\xymatrix{
\Kb(T^*X) \ar[r] \ar[d]_-{(\pi^*)^{-1}} & \Kb(T^*X \setminus S) \ar[d]^-{\nu^*} \\
\Kb(X) \ar[r] & \Kb(U).
}
\]
This gives rise to a morphism of fibres (of the rows), that is, a map 
$$\phi_{\underline{\nu}}\colon\Kb(T^*X,S) \to \Kb_{X,Z}.$$

\begin{definition}
We define $\Ec_{\underline{\nu}}= Q_S \circ \phi_{\underline{\nu}}$.
\end{definition} 

Depending on how we choose $\underline{\nu}$ we may even assume that $Z$ is a zero-dimensional closed subscheme. In this case we can always apply $\grdet \circ R\Gamma$ and use this as a definition of an epsilon determinant. 

\begin{rmk}
The $\Ab^1$-invariance property for algebraic $K$-theory of regular schemes also plays an implicit role in Abe--Patel's \cite{abepatel}. In \emph{loc. cit.} Levine's coniveau tower  (\cite{Levine}) is used to prove a localization formula for epsilon-factors. Levine's construction is based on the $\Ab^1$-invariance property of the algebraic $K$-theory spectrum of a regular scheme.
\end{rmk}

\subsection{De Rham epsilon factors in families}

\subsubsection{A replacement for $\Ab^1$-homotopy}

When replacing $X/k$ by $X_A = X \times_{\Spec k} \Spec A$ we can no longer apply $\Ab^1$-invariance of $K$-theory. After all, the ring $A$ might not be regular. In particular, pullback along $\pi_A \colon T^*X_A \to X_A$ induces no longer an equivalence of $K$-theory spectra. The map $(\pi^*)^{-1}$ which is used in the construction of de Rham epsilon factors is therefore no longer available to us. Similarly, the linear homotopies alluded to in the discussion of Lemma-Definition \ref{lemma:phi} cannot be interpreted as $\Ab^1$-homotopies anymore.

As a substitute we can use homotopies indexed by projective spaces $\Pb^N$ with a modulus condition at infinity. This is reminiscent of the theory of reciprocity sheaves and motives with modulus (\cite{ksy}).

 We recall the following well-known result which provides the technical backbone for this strategy. Let $Z$ be a quasi-compact and quasi-separated scheme, $i\colon \Pb_Z^{N-1} \to \Pb_Z^N$ be the inclusion as divisor at $\infty$. Since $i$ is regular we have an induced map $i_*\colon \Kb(\Pb_Z^{N-1}) \to \Kb(\Pb_Z^N)$. We denote by $j\colon Z \to \Pb_Z^N$ a section, factorising through the open subscheme $\Pb^N_Z\setminus \Pb^{N-1}_Z$.

\begin{proposition}\label{prop:Phomotopy}
There is a cocartesian diagram of spectra
\[
\xymatrix{
\Kb(\Pb_Z^{N-1}) \ar[r]^{i_*} \ar[d] & \Kb(\Pb_Z^N) \ar[d]^{Lj^*} \\
0 \ar[r] & \Kb(Z).
}
\]
\end{proposition}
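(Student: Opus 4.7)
The plan is to reduce the statement to an explicit linear-algebraic computation via the projective bundle formula.

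First, I would invoke the projective bundle formula for qcqs schemes (Thomason--Trobaugh \cite[Theorem 4.1]{MR1106918}) to fix equivalences
$$\alpha_m \colon \bigoplus_{k=0}^m \Kb(Z) \xrightarrow{\sim} \Kb(\Pb^m_Z), \qquad (y_k)_k \mapsto \sum_{k=0}^m [\Oo_{\Pb^m_Z}(-k)] \cdot \pi^* y_k,$$
for $m=N-1$ and $m=N$. Under these equivalences, the question becomes one about a map between finite sums of copies of $\Kb(Z)$.

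Next, I would compute $i_*$ and $Lj^*$ in these bases. The short exact sequence $0\to \Oo_{\Pb^N_Z}(-k-1)\to \Oo_{\Pb^N_Z}(-k)\to i_*\Oo_{\Pb^{N-1}_Z}(-k)\to 0$ together with the projection formula for $i_*$ shows that $i_*$ is given by the $(N+1)\times N$ lower bidiagonal matrix $M$ with $+1$'s on the main diagonal and $-1$'s on the subdiagonal. Because $j$ is a section of $\pi$, we have $Lj^*[\Oo(-k)]\simeq [\Oo_Z]$ for every $k$ and $Lj^*\circ \pi^*\simeq \id$, so $Lj^*$ corresponds to the sum map $\sigma\colon \bigoplus_{k=0}^N \Kb(Z)\to \Kb(Z)$, $(y_k)\mapsto \sum_k y_k$. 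The composition $Lj^*\circ i_*$ vanishes: conceptually, because $j$ factors through the open complement $\Pb^N_Z\setminus \Pb^{N-1}_Z$, so proper base change along the cartesian square with empty upper-left corner gives $Lj^*\circ i_* \simeq 0$; matrix-wise, because each column of $M$ sums to zero.

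Finally, I would check that the resulting map $\cofib(i_*)\to \Kb(Z)$ is an equivalence. The sum map $\sigma$ is split by the inclusion of the first summand, which identifies $\fib(\sigma)$ with the remaining $N$ copies of $\Kb(Z)$. Composing $M$ with the projection onto these last $N$ factors yields an upper triangular integer matrix with $-1$'s on the diagonal, hence invertible over $\Zb$. Therefore $i_*\colon \Kb(\Pb^{N-1}_Z)\to \Kb(\Pb^N_Z)$ is equivalent, through $\alpha_{N-1}$ and $\alpha_N$, to the split inclusion $\fib(Lj^*)\hookrightarrow \Kb(\Pb^N_Z)$, which gives the desired cocartesian square.

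The only non-formal ingredient is the projective bundle formula at the level of $K$-theory spectra for qcqs base; the remainder is the elementary matrix calculation above.
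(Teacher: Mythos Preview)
Your proposal is correct and follows essentially the same route as the paper's proof: both invoke the projective bundle formula from Thomason--Trobaugh (the paper cites Theorem~7.3 rather than~4.1), compute $i_*$ via the resolution $[\Oo_{\Pb^N_Z}(-1)\to\Oo_{\Pb^N_Z}]\simeq i_*\Oo_{\Pb^{N-1}_Z}$ to obtain the same bidiagonal matrix, identify $Lj^*$ with the codiagonal (sum) map, and conclude by an elementary linear-algebra verification that the cofibre of $i_*$ is $\Kb(Z)$. Your final step, exhibiting the splitting of $\sigma$ and checking invertibility of the resulting triangular matrix, is slightly more explicit than the paper's, but the content is the same.
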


\begin{proof}
It is clear that the commutative diagram above exists (furthermore, commutativity of a square can be checked in the homotopy category).
According to Thomason--Trobaugh \cite[Theorem 7.3]{MR1106918}, the exact functor $$\Perf(Z)^{N+1} \to \Perf(\Pb^N_Z)$$ which sends $(V_0,\dots,V_n)$ to $\bigoplus_{i=0}^n V_i \otimes \Oo_{\Pb^N_Z}(-i)$ induces an equivalence $\Kb(Z)^{N+1} \to^{\simeq} \Kb(\Pb^N_Z)$.

By virtue of the locally free resolution
$$[\Oo_{\Pb^{N}_Z}(-1) \to \Oo_{\Pb^N_Z}] \to i_*\Oo_{\Pb^{N-1}_Z}$$
we see that we have a commutative diagram
\[
\xymatrix{
\Kb(\Pb^{N-1}_Z) \ar[r]^{i_*} \ar[d]_{\simeq} & \Kb(\Pb^N_Z) \ar[d]^{\simeq} \\
\bigoplus_{i=0}^{N-1} \Kb(Z) \ar[r]^{\alpha} & \bigoplus_{i=0}^N \Kb(Z) ,
}
\]
where $\alpha$ is homotopic to $(e_0,\dots,e_{n-1}) \mapsto (e_0,e_1 - e_0,\dots,e_{n-1}-e_{n-2},-e_{n-1})$. The cofibre is therefore equivalent to a copy of $\Kb(Z)$. It is given by the codiagonal morphism $\beta \colon \bigoplus_{i=0}^N \Kb(Z) \to \Kb(Z)$ which sends $(e_0,\dots,e_n)$ to $e_0 + \cdots + e_n$. Using the description of the $K$-theory of $\Pb^N_Z$ of \emph{loc. cit.}, one sees that 
\[
\xymatrix{
\Kb(\Pb^{N}_Z) \ar[r]^{Lj^*} \ar[d]_{\simeq} & \Kb(A) \ar[d]^{\simeq} \\
\bigoplus_{i=0}^{N-1} \Kb(Z) \ar[r]^{\beta} & \bigoplus_{i=0}^N \Kb(Z) 
}
\]
commutes. This concludes the proof of the assertion.
\end{proof}

\begin{construction}\label{const:homotopy}
Let $Z$ be a quasi-compact and quasi-separated scheme, and $s_0,\dots,s_n\in (\Pb^N_Z \setminus \Pb^{N-1})(Z)$ an $(n+1)$-tuple of sections. Then we construct an $n$-simplex of spectra 
$$\sigma_{s_0,\dots,s_n}\colon\Delta^n \to \Spectra,$$
such that for every $1$-face $\Delta^1_i = \{0,i\} \subset \Delta^n$, the $1$-simplex $\sigma_{s_0,\dots,s_n}|_{\Delta^1_i} = s_i^*\colon \Delta^1 \to \Spectra$ is the morphism of spectra $Ls_i^*\colon K(\Pb^N_Z) \to \Kb(Z)$, and for all other $1$-faces $\{i,j\} \subset [n]$ with $0 \neq i,j$, $\sigma_{s_0,\dots,s_n}|_{\Delta_{ij}} = \id_{\Kb_{Z}}$.  

Furthermore, for every given choice of $(n-1)$-simplices $\sigma_i$ for $(s_0,\dots,\widehat{s_i},\dots,s_n)$ (for all $i \in \{0,\dots,n\}$) satisfying the property above, we may assume that $\sigma_i$ is the $i$-th face of $\sigma_{s_0,\dots,s_n}$. 
\end{construction}

\begin{proof}[Construction]
This follows from \cite[Remark 1.1.1.7]{Lurie:ha}: if $\Cc$ is a stable $\infty$-category (in our case the $\infty$-category of spectra), then we denote by $\Ec$ the full subcategory of the $\infty$-category of squares $\Map((\Delta^1)^2,\Cc)$, consisting of all cofibre diagrams
\[
\xymatrix{
A \ar[r] \ar[d] & B \ar[d] \\
0 \ar[r] & C.
}
\]
The functor $\Psi\colon \Ec \to \Map(\Delta^1,\Cc)$ which sends such a cofibre diagram to the morphism $(A \to B) \in \Map(\Delta^1,\Cc)$, is an equivalence. That is, its fibres are contractible Kan complexes.

We now apply this property of stable $\infty$-category to the morphism of spectra 
$$\Kb(\Pb_Z^{N-1}) \to^{i_*} \Kb(\Pb_Z^N).$$
By virtue of Proposition \ref{prop:Phomotopy}, we have that for all $i=0,\dots,n$ a cofibre diagram
\[
\xymatrix{
\Kb(\Pb_Z^{N-1}) \ar[r]^{i_*} \ar[d] & \Kb(\Pb_Z^N) \ar[d]^{Ls_i^*} \\
0 \ar[r] & \Kb(Z),
}
\]
which belongs to $\Psi^{-1}(i_*)$. Contractibility of $\Psi^{-1}(i_*)$ yields the required $n$-simplices.
\end{proof}

\begin{corollary}\label{cor:Phomotopy}
Let $h\colon Z \times \Pb^1 \to Y$ be a morphism, we write $h \circ j_0 = h_0$ and $h \circ j_1 = h_1$. Then we have a homotopy of maps of spectra $h_0^*\simeq h_1^*\colon \Kb(Y) \to \Kb(X)$.
\end{corollary}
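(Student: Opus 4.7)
The plan is to deduce the corollary directly from Construction \ref{const:homotopy} applied with $N = 1$. By functoriality of derived pullback, $h_i^* \simeq Lj_i^* \circ Lh^*$ as morphisms $\Kb(Y) \to \Kb(Z)$ for $i = 0, 1$, so it suffices to produce a homotopy $Lj_0^* \simeq Lj_1^*$ between morphisms $\Kb(Z \times \Pb^1) \to \Kb(Z)$ and then post-compose with $Lh^*$.

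To build this homotopy, I invoke Construction \ref{const:homotopy} with $N = 1$ and $n = 2$. After a change of homogeneous coordinates on $\Pb^1$ (trivial in the standard case $j_0 = 0$, $j_1 = 1$), one may assume that both sections factor through an affine chart $\Ab^1_Z = \Pb^1_Z \setminus \{\infty\}$. Pick any auxiliary section $s_0$ of $\Ab^1_Z$, and form the $2$-simplex $\sigma_{s_0, j_0, j_1} \colon \Delta^2 \to \Spectra$ supplied by the construction. By design its edges $\{0,1\}$ and $\{0,2\}$ are $Lj_0^*$ and $Lj_1^*$ respectively, while its edge $\{1,2\}$ is $\id_{\Kb(Z)}$. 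Interpreted as a triangle in the $\infty$-category of spectra whose two target vertices are identified via the identity edge, this $2$-simplex is exactly a homotopy from $Lj_0^*$ to $Lj_1^*$, as required.

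The one step that requires care is ensuring that $j_0$ and $j_1$ can be placed inside a common affine chart of $\Pb^1$, i.e.\ that they jointly avoid some $\Pb^0 \subset \Pb^1$. For arbitrary sections this need not hold globally on $Z$, but it always holds Zariski-locally. Zariski descent of algebraic $K$-theory \cite[Theorem 8.1]{MR1106918} — applied in the manner of the proof of Lemma-Definition \ref{lemma:phi} — then glues the locally constructed homotopies into a global one. This descent bookkeeping is the main (and essentially only) obstacle; the rest is a direct appeal to the construction.
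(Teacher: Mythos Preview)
Your proof is correct and follows essentially the same route as the paper: reduce to showing $Lj_0^* \simeq Lj_1^*$ and then deduce this from the contractibility of the fibre $\Psi^{-1}(i_*)$ of cofibre diagrams over the morphism $i_*\colon \Kb(\Pb^0_Z) \to \Kb(\Pb^1_Z)$. You invoke this contractibility via Construction~\ref{const:homotopy}; the paper invokes it directly by citing the equivalence between cofibre diagrams and $\Hom(\Delta^1,\Cc)$. These are the same argument.

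One remark: your final paragraph about Zariski descent is unnecessary. In the corollary (as the paper's own proof makes explicit), $j_0$ and $j_1$ are the constant sections of $\Pb^1_Z \to Z$ at the rational points $0,1 \in \Pb^1_k$. Both factor globally through $\Ab^1_Z = \Pb^1_Z \setminus \{\infty\}$, so no local-to-global argument is needed. Your extra generality (arbitrary sections, possibly not disjoint from a common $\Pb^0$) is not part of the statement and only complicates the write-up.
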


\begin{proof}
The diagram of Proposition \ref{prop:Phomotopy} is cocartesian and its bottom left corner is contractible (we call such diagrams cofibre diagrams). The $\infty$-category of cofibre diagrams is equivalent to the the $\infty$-category of morphisms $\Hom(\Delta^1,\C)$. In particular, we conclude that $Lj_x^*\simeq Lj_y^*$ for two rational points $x,y$ of $\Pb^1$. That is, $Lj_0^* \simeq Lj_1^*$.

For $i= 0,1$ we have a commutative diagram
\[
\xymatrix{
X \times \Pb^1 \ar[r]^h & Y \\
X, \ar[u]^{j_i} \ar[ur]_{h_i} &
}
\]
where $j_i\colon X \to \Pb^1_X$ denotes the section corresponding to the rational point $i \in \Pb^1_k$. We have
$$h_i^* \simeq h^* \circ j_i^*.$$
In the first paragraph of this proof we have shown that $j_0^* \simeq j_1^*$, and hence we obtain $h_0^* \simeq h_1^*$.
\end{proof}

We will also refer to a morphism $h\colon Z \times \Pb^1 \to Y$ as a $\Pb^1$-homotopy between $h_0$ and $h_1$.
%

\subsubsection{Associated graded modules}

Let $Y$ be a scheme and $F\Ac$ a filtered quasi-coherent sheaf of algebras, such that $\gr \Ac$ is a commutative quasi-coherent sheaf of algebras on $Y$. Furthermore, we assume that for every $i \in \Nb$, the $i$-th filtered piece $F^i\Ac$ is a finitely generated locally free sheaf on $Y$. We denote by $V \to Y$ the relative spectrum of $\gr \Ac$. A sheaf of $\Ac$-modules is always understood to be a right $\Ac$-module.

\begin{definition}
The $K$-theory of perfect complexes of filtered $F\Ac$-modules will be denoted by $\Kb(F\Ac)$. The $K$-theory of perfect complexes of bifiltered modules (see \cite[p. 12]{MR2981817}) will be denoted by $K(FF\Ac)$. The functor $\Perf(F\Ac) \to^{\gr} \Perf(V)$ gives rise to a morphism of spectra $\Kb(F\Ac) \to \Kb(V)$.
\end{definition}

The following statement is proven exactly like \cite[Theorem 3.1.15(1)]{MR2981817}. It allows us to descend constructions in the algebraic $K$-theory of filtered $\Ac$-modules to the algebraic $K$-theory of unfiltered $\Ac$-modules.

\begin{lemma}[Patel]\label{lemma:bifilt}
Assume that every perfect complex of $\Ac$-modules as a finite resolution by sheaves of locally projective $\Ac$-modules. Let $S \subset V$ be a closed $\G_m$-invariant subset. We have a cocartesian diagram of spectra
\[
\xymatrix{
\Kb(FF\Ac,S) \ar[r] \ar[d] & \Kb(F\Ac,S) \ar[d] \\
\Kb(F\Ac,S) \ar[r] & \Kb(\Ac,S),
}
\]
where the downward morphism $\Kb(FF\Ac) \to \Kb(F\Ac)$ forgets the first filtration, and the rightward morphism $\Kb(FF\Ac) \to \Kb(F\Ac)$ forgets the second filtration.
\end{lemma}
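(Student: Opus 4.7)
The plan is to establish the square as cocartesian by identifying the cofibres of the two horizontal arrows. In the stable setting, a commutative square is cocartesian if and only if the induced map of horizontal cofibres is an equivalence; so it suffices to construct a natural equivalence
\[
\cof\bigl(\Kb(FF\Ac,S) \to \Kb(F\Ac,S)\bigr) \simeq \cof\bigl(\Kb(F\Ac,S) \to \Kb(\Ac,S)\bigr),
\]
where the top map forgets the second filtration and the bottom map forgets the unique remaining filtration. Heuristically, the information discarded in both cases is the same, namely a single exhaustive filtration of a perfect $\Ac$-module compatible with the action, so the cofibres should be identifiable in terms of the common combinatorial datum.

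The first step is to compute the bottom cofibre following Patel's strategy in \cite[Theorem 3.1.15(1)]{MR2981817}. The hypothesis that every perfect complex of $\Ac$-modules admits a finite locally projective resolution reduces the computation to a Waldhausen category of filtered locally projective objects. Applying Waldhausen's additivity theorem to the termwise filtration $F^{i-1}M \hookrightarrow F^iM \twoheadrightarrow F^iM / F^{i-1}M$ then expresses the cofibre in terms of a $K$-theory of graded $\gr\Ac$-modules supported on $S$. The identical argument applies at the bifiltered level once one observes that, by the assumption and a bifiltration of the resolution, a perfect bifiltered complex admits a bifiltered locally projective resolution.

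The second step is to match the two resulting cofibres. The forgetful functor $\Perf(FF\Ac,S) \to \Perf(F\Ac,S)$ that discards the first filtration induces a comparison map on the cofibres; under the identifications from Step~1, one checks that this comparison becomes the identity on the relevant graded $K$-theory of $\gr\Ac$ supported on $S$. Here the $\G_m$-invariance of $S$ is essential: the associated graded construction is $\G_m$-equivariant, and $\G_m$-invariance guarantees that the support condition is preserved throughout the additivity argument, independently of which filtration we have removed first.

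The main obstacle will be carrying out the additivity argument in the bifiltered setting while simultaneously keeping track of the support condition $S$ with respect to both filtrations. In particular, one must verify that the inductive reduction of the filtration length by one commutes compatibly with the second filtration and with membership in $S$, which is where the $\G_m$-invariance enters most delicately (effectively as $\G_m^2$-invariance accounting for both gradings produced by the Rees construction). Once these compatibilities are in place, the two cofibres are canonically equivalent and the square is cocartesian.
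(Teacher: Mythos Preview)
The paper does not give its own proof of this lemma; it simply asserts that the statement ``is proven exactly like \cite[Theorem 3.1.15(1)]{MR2981817}'' and moves on. Your proposal follows precisely that reference and sketches the expected argument (reduction to locally projective resolutions, Waldhausen additivity on the filtration, and matching cofibres in the bifiltered setting), so your approach is aligned with what the paper intends. There is nothing further to compare, since the paper offers no additional detail beyond the citation you are already unpacking.
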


The associated graded $\gr\colon \Perf(F\Ac) \to \Perf(V)$ factors naturally through $\Perf([V/\G_m])$. This observation is recorded in the right hand triangle of the following commutative diagram.

\begin{lemma}\label{lemma:associated_graded}
For a $\G_m$-invariant closed subset $S \subset V$ there exists a morphism of spectra $$Q_S\colon \Kb(\Ac) \to \Kb(V),$$ such that we have a commutative diagram
\[
\xymatrix{
\Kb(F\Ac,S) \ar[r]^-{\eqgr} \ar[d] \ar[rd]^{\gr} & \Kb([V/\G_m],[S/\G_m]) \ar[d]^{q^*} \\
\Kb(\Ac,S) \ar@{-->}[r]^-{Q_S} & \Kb(V,S)
}
\]
of spectra, where $q\colon V \to [V/\G_m]$ is the quotient map.
\end{lemma}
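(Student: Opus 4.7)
Plan: The right-hand triangle of the displayed diagram commutes already at the level of exact functors on perfect complexes. Indeed, the associated graded $\gr M$ of a filtered $\Ac$-module is canonically a $\G_m$-equivariant perfect complex on $V$ (the $\G_m$-action encoding the grading on $\gr\Ac$), and the composite with the forgetful functor $q^*$ is precisely the ordinary associated graded viewed as a perfect complex on $V$. The singular support hypothesis is preserved because taking associated graded preserves the $\G_m$-invariant support. Passing to $K$-theory yields the commuting upper-right triangle $\gr = q^* \circ \eqgr$.

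For the lower-left triangle, the strategy is to descend $\gr$ along the cocartesian square supplied by Lemma \ref{lemma:bifilt}. By the universal property of the pushout, a dashed arrow $Q_S$ together with a homotopy $Q_S \circ (\text{forget}) \simeq \gr$ exists if and only if we exhibit a homotopy between the two compositions
\[
\Kb(FF\Ac,S) \rightrightarrows \Kb(F\Ac,S) \xrightarrow{\gr} \Kb(V,S),
\]
where the parallel arrows are the two ``forget a filtration'' maps of Lemma \ref{lemma:bifilt}.

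Unwinding, for a perfect bifiltered $\Ac$-module $M$ with filtrations $(F_1,F_2)$, this amounts to a natural equivalence in $\Kb(V,S)$ between $\gr_{F_1}M$ (forgetting $F_2$ and taking $F_1$-graded) and $\gr_{F_2}M$ (forgetting $F_1$ and taking $F_2$-graded). Both carry bounded induced filtrations from the remaining filtration, and the corresponding associated graded modules both agree with the total bi-associated graded $\gr_{F_1}\gr_{F_2}M \simeq \gr_{F_2}\gr_{F_1}M$ as $\Oo_V$-modules. The Waldhausen additivity theorem then identifies each of $\gr_{F_1}M$ and $\gr_{F_2}M$ with this common total associated graded in $\Kb(V,S)$, yielding the equivalence at the level of $K_0$.

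The main obstacle is upgrading this $K_0$-level observation into a homotopy-coherent morphism of spectra. Following Patel's proof of \cite[Theorem 3.1.15(1)]{MR2981817}, one expresses each of the two bounded filtrations as a finite sequence of admissible monomorphisms and applies the additivity theorem termwise; the compatibility of the two such sequences with the bifiltration $(F_1,F_2)$ provides the system of higher coherences required to descend along the cocartesian square. This defines $Q_S$ and the required commutativity.
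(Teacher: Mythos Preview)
Your proposal is correct and follows essentially the same route as the paper: observe that the upper-right triangle commutes at the level of exact functors, then use the cocartesian square of Lemma~\ref{lemma:bifilt} to descend $\gr$ to $Q_S$ by exhibiting a homotopy between the two composites $\Kb(FF\Ac,S)\rightrightarrows\Kb(F\Ac,S)\xrightarrow{\gr}\Kb(V,S)$, which the paper likewise obtains by reducing to adjacent filtrations \`a la Patel. Your explicit identification of both composites with the common bi-associated graded via additivity is exactly the content of that reduction.
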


\begin{proof}
It suffices to construct the left hand commuting triangle. As in \cite{MR2981817} we consider the $K$-theory of perfect bifiltered $F\Ac$-modules with set-theoretic support of the associated graded module (with respect to either filtration) on $S \subset V$. By reducing to the case adjacent filtrations as in the proof of \cite[Theorem 3.1.5, 3.1.15(2)]{MR2981817}, one produces a commutative diagram
\[
\xymatrix{
\Kb(FF\Ac,S) \ar[r] \ar[d] & \Kb(F\Ac,S) \ar[d]^{\gr} \\
\Kb(F\Ac,S) \ar[r]^{\gr} & \Kb(V,S).
}
\]
Since $\Kb(\Ac,S)$ is equivalent to the pushout of spectra $\Kb(F\Ac,S) \sqcup_{\Kb(FF\Ac,S)} \Kb(F\Ac,S)$ (Lemma \ref{lemma:bifilt}), we obtain the required morphism $Q_S\colon \Kb(\Ac) \to \Kb(V)$.
\end{proof}

\subsubsection{Compactification}\label{compactification}

The purpose of this paragraph is to state and prove \ref{prop:compactification}. We set the stage by fixing notation.

\begin{situation}\label{situation:compactification}
Let $Y$ be a scheme and $\Ac/Y$ a quasi-coherent sheaf of algebras on $Y$ with a filtration $F_i\Ac$, such that the associated sheaf of graded algebras is isomorphic to $\Sym E$, where $E/Y$ is a vector bundle on $Y$ with total space denoted by $V$. Let $S \subset V$ be a $\G_m$-invariant closed subset. We consider a line bundle $L$ on $Y$ and denote the twist of $V$ (respectively $S$) by $L$ with $V_L$ (respectively $S_L$). Let $U \subset Y$ be an open subscheme, such that we have an isomorphism $L|_U \simeq \Oo_U$.
\end{situation}

\begin{proposition}\label{prop:compactification}
There is a commutative square of spectra
\[
\xymatrix{
\Kb(F\Ac,S) \ar[r] \ar[d] & \Kb(V_L,S_L) \ar[d] \\
\Kb(\Ac,S)  \ar[r] & \Kb(V \times_Y U, S \times_Y U) 
}
\]
where the vertical arrows are given by forgetting the filtration, respectively restriction to $U$.
\end{proposition}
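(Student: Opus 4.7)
The plan is to construct the top horizontal arrow as $p_L^*\circ\eqgr$, where $\eqgr$ is the equivariant associated graded of Lemma \ref{lemma:associated_graded} and $p_L^*$ is pullback along a natural stack morphism $p_L\colon V_L\to[V/\G_m]$, and then to deduce commutativity from Lemma \ref{lemma:associated_graded} together with the triviality of $L|_U$.

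First I would observe that the grading on $\Sym E$ equips $V=\Spec_Y\Sym E$ with a natural $\G_m$-action under which $S$ is invariant, and that $L$ determines a $\G_m$-torsor $L^\times\to Y$. Identifying $V_L$ with the contracted product $L^\times\times^{\G_m}V=[V\times_Y L^\times/\G_m]$ (and similarly for $S_L$), the projection $V\times_Y L^\times\to V$ descends to a morphism of stacks $p_L\colon V_L\to[V/\G_m]$. Pullback along $p_L$ yields $p_L^*\colon\Kb([V/\G_m],[S/\G_m])\to\Kb(V_L,S_L)$, and I take the top horizontal arrow of the proposition to be $p_L^*\circ\eqgr$, and the bottom horizontal arrow to be $Q_S$ (from Lemma \ref{lemma:associated_graded}) followed by restriction to $U$.

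For commutativity, Lemma \ref{lemma:associated_graded} already supplies the identification $q^*\circ\eqgr\simeq Q_S\circ(\text{forget filtration})$, so it suffices to verify that the auxiliary diagram of stacks
\[
\xymatrix{
V\times_Y U \ar[r] \ar[d] & V \ar[d]^-{q} \\
V_L \ar[r]^-{p_L} & [V/\G_m]
}
\]
commutes, where the left vertical uses the trivialisation $L|_U\simeq\Oo_U$ to identify $V|_U$ with $V_L|_U$. This is immediate: the trivialisation identifies $L^\times|_U$ with $U\times\G_m$ and turns the restriction of $p_L$ into the restriction of $q$, so both composites yield the same map $V\times_Y U\to[V/\G_m]$. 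Applying $K$-theory produces the required commutative square of spectra.

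The main obstacle I anticipate is not the $1$-categorical diagram chase itself but the upgrade of the above identifications to a genuinely commutative square in the $\infty$-category of spectra; this should however be handled by the functoriality of pullback along morphisms of stacks and by the $\infty$-categorical framework already adopted in Lemma \ref{lemma:associated_graded}.
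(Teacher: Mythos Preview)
Your proposal is correct and essentially the same as the paper's proof. The paper factors the top arrow as $\Kb(F\Ac,S)\xrightarrow{\eqgr}\Kb([V/\G_m],[S/\G_m])\xrightarrow{\simeq}\Kb([V_L/\G_m],[S_L/\G_m])\xrightarrow{q_L^*}\Kb(V_L,S_L)$, invoking Lemma~\ref{lemma:twist} for the middle equivalence; your map $p_L$ is precisely the composite $V_L\to[V_L/\G_m]\simeq[V/\G_m]$, so you have simply collapsed two of the paper's steps into one. As for your worry about $\infty$-categorical commutativity: the paper dispatches this in one line by observing that a commutative square of spectra is the same as a square commuting in the homotopy category (there is only one $2$-cell to fill), so no further coherence work is needed.
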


The proof will be given at the end of this paragraph. We maintain the notation fixed in Situation \ref{situation:compactification}. The proof of the following statement is an amusing exercise and hence we will only be sketched.

\begin{lemma}\label{lemma:twist}
For every $L$ there exists an equivalence of quotient stacks $[V/\G_m] \simeq [V_L/\G_m].$
Furthermore, it induces an equivalence $[(V \setminus S)/\G_m] \simeq [(V_L \setminus S_L)/\G_m].$
\end{lemma}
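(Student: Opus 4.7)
The plan is to exhibit both sides as two different quotients of a single $\G_m \times \G_m$-space. Let $L^{\times} \to Y$ denote the $\G_m$-torsor associated to the line bundle $L$ (the complement of the zero section), and consider the fibre product $W = V \times_Y L^{\times}$. We endow $W$ with a $\G_m \times \G_m$-action in which the first copy of $\G_m$ acts by scaling on $V$ and leaves $L^{\times}$ fixed, while the second copy of $\G_m$ acts by the torsor structure on $L^{\times}$ and leaves $V$ fixed. These two actions commute.

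First, I would compute the quotient $[W/(\G_m\times\G_m)]$ by first quotienting by the second $\G_m$-factor. Since $L^{\times} \to Y$ is a $\G_m$-torsor we have $L^{\times}/\G_m \simeq Y$, and therefore $W/\{1\}\times \G_m \simeq V \times_Y Y \simeq V$. The residual action of the first $\G_m$-factor identifies with the scaling $\G_m$-action on $V$, so that
\[
[W/(\G_m\times\G_m)] \simeq [V/\G_m].
\]
Second, I would compute the same quotient by first quotienting by the anti-diagonal subgroup $\Delta^{-}\colon\G_m \hookrightarrow \G_m\times\G_m$, $t \mapsto (t,t^{-1})$. This is the usual Borel/associated-bundle construction: by definition of the twist, $W/\Delta^{-}(\G_m) \simeq V_L$, and the residual $\G_m$-action (via any complementary copy such as $(t,1)$) identifies with the scaling action on $V_L$. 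Hence
\[
[W/(\G_m\times\G_m)] \simeq [V_L/\G_m],
\]
and combining the two computations yields the required equivalence $[V/\G_m] \simeq [V_L/\G_m]$.

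For the second statement, I would observe that the same argument applies verbatim with $V$ replaced by the $\G_m$-invariant open subscheme $V \setminus S \subset V$: forming $W' = (V\setminus S) \times_Y L^{\times}$ inside $W$ and running the same two quotient computations produces the identifications
\[
[(V\setminus S)/\G_m] \simeq [W'/(\G_m\times\G_m)] \simeq [(V_L\setminus S_L)/\G_m],
\]
where the right-hand identification uses that the anti-diagonal quotient is precisely the twisting construction, so that $W'/\Delta^{-}(\G_m)$ is the complement $V_L \setminus S_L$ of the twist of $S$. The fact that the equivalences are compatible with the inclusions $V\setminus S \hookrightarrow V$ and $V_L\setminus S_L \hookrightarrow V_L$ shows that the two equivalences are restrictions of one another.

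The main obstacle, such as it is, is purely bookkeeping: one has to fix conventions (choice of sign in $\Delta^{-}$, convention relating $E \otimes L$ and $V_L$) consistently so that the residual action after the anti-diagonal quotient genuinely matches the scaling action on $V_L$. Since $L^{\times}$ is a $\G_m$-torsor and the whole construction is functorial in pairs $(V,S)$ of $\G_m$-invariant closed subsets, once the conventions are pinned down both assertions follow simultaneously from a single application of the Borel construction.
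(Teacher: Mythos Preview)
Your proof is correct and rests on the same geometric input as the paper's sketch: both arguments use the $\G_m$-torsor $L^{\times}$ (the paper calls it $P$) together with the multiplication map $V \times_Y L^{\times} \to V_L$, $(s,t)\mapsto s\otimes t$. The only difference is packaging---you present it as two ways of taking a double quotient of $W = V\times_Y L^{\times}$ by $\G_m\times\G_m$, which has the pleasant side effect that the inverse equivalence comes for free, whereas the paper constructs the map $[V/\G_m]\to[V_L/\G_m]$ directly from the definition of morphisms to a quotient stack and then remarks that the inverse is built ``by similar means''.
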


\begin{proof}[Sketch]
We begin by constructing a morphism $V \to [V_L / \G_m]$. By definition of quotient stacks, it corresponds to a $\G_m$-equivariant morphism
$$f\colon V \times_Y P \to V_L$$
where $P$ is a $\G_m$-torsor and the $\G_m$ acts on the left hand side solely on the second component $P$. We choose $P$ to be the $\G_m$-torsor given by $L$ minus the zero section. We then have a natural $\G_m$-equivariant morphism $V \times_Y P \to V_L$ which sends a pair of sections $(s,t)$ to $s \otimes t$. This defines a morphism $[V/\G_m] \to [V_L/\G_m]$. Its inverse can be constructed by similar means, and the second assertion follows by inspection.
\end{proof}

\begin{proof}[Proof of Proposition \ref{prop:compactification}]
Exhibiting a commutative square of spectra is equivalent to defining a commutative square in the homotopy category of spectra. We claim that the following diagram in the homotopy category of spectra commutes:
\[
\xymatrix{
\Kb(F\Ac,S) \ar[r] \ar[d] & \Kb([V/\G_m],[S/\G_m]) \ar[r] \ar[d] & \Kb([V_L/\G_m],[S_L/\G_m]) \ar[d] \ar[r] & \Kb(V_L, S_L) \ar[ld] \\
\Kb(\Ac,S) \ar[r] & \Kb(V,S)   \ar[r] & \Kb(V \times_Y U,S\times_Y U). &
}
\]
The left hand square commutes by Lemma \ref{lemma:associated_graded}. The second square is obtained by applying the algebraic $K$-theory functor to the equivalence of Lemma \ref{lemma:twist}. The commuting triangle on the right is obtained from the isomorphism $V_L \times_Y U \simeq V \times_Y U$, stemming from the trivialisation of $L$.
\end{proof}

\subsubsection{The relative de Rham epsilon factor}\label{sub:relative}

Let $I$ be a non-empty finite ordered set. We denote by $\Ab^{\Delta_{I}}$ the affine space of dimension $|I|-1$ defined by the linear equation
$$\sum_{i \in I} \lambda_i = 1.$$

\begin{situation}\label{situation:tuple2}
Let $X$ be a smooth $n$-dimensional $k$-variety and $Z \subset X$ a closed subset. Let $S \subset T^*X$ be a closed subset. We denote the complement of $Z$ by $U$. Consider an open covering $U = \bigcup_{i=1}^m U_i$, and for each $i=1,\dots, m$ a section $\nu_i \in \Omega_{X_A/A}^1(U_i)$, such that for each ordered subset $\{i_1 < \cdots < i_{\ell}\} \subset \{1,\dots, m\}$ we have that for $U_{i_1\dots i_{\ell}} = \bigcap_{j=1}^{\ell} U_{i_j}$, the image of the morphism $\nu_{\Delta}((U_{i_1 \dots i_{\ell}})_A) \cap S_A = \emptyset$ where $\nu_{\Delta}\colon U_{i_1\dots i_{\ell}} \times \Ab^{\Delta} \to T^*X$, $\nu_{\Delta}(\lambda_1,\dots,\lambda_{\ell})= \sum_{j= 1}^{\ell} \lambda_j \nu_{i_j}$.
\end{situation}

The next construction is based on the $\Pb$-invariance property of algebraic $K$-theory. 
\begin{convention}
\begin{enumerate}
\item[(a)] We denote by $T^*X_A$ the base change $T^*X \times_{k} A$, or equivalently, the relative cotangent bundle $T^*(X_A/A)$.
\item[(b)] The notation $T^*X_A \boxtimes \Oo_{\Pb_A^m}(1)$ refers to the total space of the locally free sheaf $p_1^*\Omega^1_{X/A} \otimes p_2^*\Oo_{\Pb^m_A(1)}$, where $p_1$ and $p_2$ are the projections from $T^*X_A \times_A \Pb^m_A$ to $T^*X_A$ respectively $\Pb^m_A$.
\item[(c)] For a $\G_m$-equivariant subscheme $S \subset T^*X_A$ we write $S \boxtimes \Oo_{\Pb^m_A}(1)$ to denote the twist of $S \times \Pb_A^m$ by $p_2^*\Oo_{\Pb^m_A}(1)$ (in the sense of \ref{compactification}).
\item[(d)] We use the suggestive notation
$$(T^*X_A \setminus S) \boxtimes \Oo_{\Pb^m_A}(1) = T^*X_A \boxtimes \Oo_{\Pb^m_A}(1) \setminus S \boxtimes \Oo_{\Pb^m_A}(1),$$
$$(T^*X_A,S) \boxtimes \Oo_{\Pb^m_A}(1) = (T^*X_A \boxtimes \Oo_{\Pb^m_A}(1), S_A \boxtimes \Oo_{\Pb^m_A}(1)).$$
\end{enumerate}
\end{convention}

\begin{construction}\label{construction:bracket_nu}
Let $\underline{\nu}=(\nu_1,\dots,\nu_m)$ and $A$ be as in Situation \ref{situation:tuple2}. We construct a morphism $(\underline{\nu})^*\colon \Kb((T^*X_A \setminus S_A)\boxtimes \Oo_{\Pb^m_A}(1)) \to \Kb(U_A)$, such that we have for $i= 1,\dots, m$ a commutative diagram
\[
\xymatrix{
\Kb((T^*X_A \setminus S_A)\boxtimes \Oo_{\Pb^m_A}(1)) \ar[r]^-{(\underline{\nu})^*} \ar[rd]_{L\nu_i^*} & \Kb(U_A) \ar[d] \\
& \Kb((U_i)_A), 
}
\]
of spectra,
and, denoting by $\nu_0 = 0\colon X_A \to T^*X_A$ the zero section, we have a commutative diagram
\[
\xymatrix{
\Kb(T^*X_A\boxtimes \Oo_{\Pb^m_A}(1)) \ar[r] \ar[d]_{L\nu_0^*} & \Kb((T^*X_A \setminus S_A)\boxtimes \Oo_{\Pb^m_A}(1)) \ar[d]^{(\underline{\nu})^*} \\
\Kb(X_A) \ar[r] & \Kb(U_A)
}
\]
of spectra.
\end{construction}

\begin{proof}[Construction]
Let $I \subset  \{1,\dots,m\}$ be a subset, we denote by $(U_I)_A$ the intersection $\bigcap_{i \in I}(U_i)_A$ inside $U_A = \bigcup_{i=1}^m (U_i)_A$. In particular we have $(U_{\emptyset})_A = U_A$. The power set $\Pc(\{1,\dots,m\})$ is ordered by inclusion. The corresponding diagram of schemes 
$$\Pc(\{1,\dots, m\})^{\op} \to \Sch$$
is a pushout diagram. By virtue of the Localisation Theorem for algebraic $K$-theory, we obtain that the resulting diagram of spectra, obtained by applying the functor $\Kb$,
$$\mathcal{U}\colon \Pc(\{1,\dots,m\}) \to \Spectra$$
is a cartesian cubical diagram. By definition, the vertices of this cube are the $K$-theory spectra $\Kb((U_I)_A)$. Let $\Pc'(\{1,\dots,m\})$ denote the set of non-empty subsets. The universal property of cartesian diagrams shows that it suffices to produce a commutative diagram
$$\mathcal{V}\colon \Pc(\{1,\dots,m\}) \to \Spectra,$$
such that $\mathcal{V}|_{\Pc'(\{1,\dots,m\})} = \mathcal{U}|_{\Pc'(\{1,\dots,m\})}$,
and $\mathcal{V}(\emptyset) = \Kb((T^*X_A \setminus S_A)\boxtimes \Oo_{\Pb^m_A}(1))$, and for every $1 \leq i \leq m$ the inclusion $\emptyset \subset \{i\}$ is sent to 
$$\Kb((T^*X_A \setminus S_A)\boxtimes \Oo_{\Pb^m_A}(1)) \to^{L\nu_i^*} \Kb((U_i)_A).$$
We construct the required simplicial map $\mathcal{V}$ inductively. For every non-degenerate $\ell$-simplex 
$$\sigma \colon [\ell] \to \Pc(\{1,\dots,m\})$$ we have to associate an $\ell$-simplex $\mathcal{V}(\sigma)$, consistent with our previous choices for $(\ell -1)$-simplices. The above conditions already dictate a choice for $0$ and $1$-simplices. In order to fill in the remaining gaps we have to produce a commutative diagram of spectra
\[
\xymatrix{
\Kb((T^*X_A \setminus S_A)\boxtimes \Oo_{\Pb^m_A}(1)) \ar[rrrrd] \ar[rd] \ar[rrd] \ar[d] & & & &  \\
\Kb((U_{I_1})_A) \ar[r] & \Kb((U_{I_2})_A) \ar[r] & \Kb((U_{I_3})_A) \ar[r] & \cdots & \Kb(((U_{I_{\ell}})_A)
}
\]
for every chain of subsets $\emptyset = I_0 \subset I_1 \subset \cdots \subset I_{\ell}$. It suffices to produce an $\ell$-simplex
\[
\xymatrix{
\Kb((T^*X_A \setminus S_A)\boxtimes \Oo_{\Pb^m_A}(1)) \ar[rrrrd] \ar[rd] \ar[rrd] \ar[d] & & & &  \\
\Kb((U_{I_{\ell}})_A) \ar[r] & \Kb((U_{I_{\ell}})_A) \ar[r] & \Kb((U_{I_{\ell}})_A) \ar[r] & \cdots & \Kb((U_{I_{\ell}})_A).
}
\]
(all of whose faces are already defined by induction).
Construction \ref{const:homotopy} delivers this $\ell$-simplex to us, as a consequence of the contractibility of the Kan complex $\Psi(i_*)$. 

The property 
$$(L\nu_0^*)|_U \simeq (\underline{\nu})^*(?|_{T^*X_A \boxtimes \Oo_{\Pb^m_A}(1) \setminus S_A \boxtimes \Oo_{\Pb^m_A}(1)})$$
is obtained by applying the construction above to $S = \emptyset$ and the $(m+1)$-tuple of sections $(0, \nu_1,\dots,\nu_m)$.
\end{proof}

\begin{definition}\label{defi:homotopy}
\begin{enumerate}
\item[(a)] Let $(U_i, \nu_i)_{i = 1}^m$ a collection of open subsets $U_i \subset X$ and sections $\nu_i \in \Omega^1_{(U_i)_A/A}$ which satisfies the same condition with respect to $S \subset T^*X$ as in Situation \ref{situation:tuple2}. We refer to such a set $(U_i,\nu_i)_{i=1}^m$ as an \emph{$S$-admissible collection} (relative to $A$). 
\item[(b)] The set of $S$-admissible collections, such that $\bigcup_{i=1}^m U_i = U$ will be denoted by $\Adm_{S,A}(U)$.
\item[(c)] Given two $S$-admissible collections $(U_i, \nu_i)_{i = 1}^m$ and $(U'_i,\nu'_i)_{i = 1}^{m'}$ in $\Adm_{S,A}(U)$, we say 
$$(U_i, \nu_i)_{i = 1}^m < (U'_i,\nu'_i)_{i = 1}^{m'}$$
if the open covering $(U'_i)_{i=1}^{m'}$ refines the open covering $(U_i)_{i=1}^m$, and the union 
$$(U_i, \nu_i)_{i = 1}^m \cup (U'_i,\nu'_i)_{i = 1}^{m'}$$
is still $S$-admissible.
\end{enumerate}
\end{definition}

Applying Construction \ref{construction:bracket_nu} to $(U_i, \nu_i)_{i = 1}^m \cup (U'_i,\nu'_i)_{i = 1}^{m'}$ where $(U_i, \nu_i)_{i = 1}^m < (U'_i,\nu'_i)_{i = 1}^{m'} \in \Adm_{S,A}(U)$, we obtain a homotopy between $(\underline{\nu}_i)^*$ and $(\underline{\nu}')^*$.

\begin{corollary}\label{cor:homotopy}
Let $(U_i, \nu_i)_{i = 1}^m < (U'_i,\nu'_i)_{i = 1}^{m'} \in \Adm_{S,A}(U)$. Then we have a commutative diagram
\[
\xymatrix{
\Kb((T^*X_A \setminus S_A)\boxtimes \Oo_{\Pb^m_A}(1)) \ar[r]^-{(\underline{\nu})^*} \ar[rd]_{(\underline{\nu}')^*} & \Kb(U_A) \ar@{=}[d] \\
& \Kb(U_A) 
}
\]
of spectra,
and, for every triple $(U_i, \nu_i)_{i = 1}^m < (U'_i,\nu'_i)_{i = 1}^{m'} < (U''_i,\nu''_i)_{i = 1}^{m''}   \in \Adm_{S,A}(U)$
the commuting triangles above fit into a commuting tetrahedron
\[
\xymatrix{
& & \Kb(U_A) \ar@{=}[ld] \ar@{=}[ld] \ar@{=}[ldd] \\
\Kb((T^*X_A \setminus S_A)\boxtimes \Oo_{\Pb^m_A}(1)) \ar[r]^-{(\underline{\nu})^*} \ar[urr]^{(\underline{\nu}'')^*} \ar[rd]_{(\underline{\nu}')^*} & \Kb(U_A) \ar@{=}[d] & \\
& \Kb(U_A) &
}
\]
of spectra.
Denoting by $\nu_0 = 0\colon X_A \to T^*X_A$ the zero section, we have a commutative diagram
\[
\xymatrix{
\Kb(T^*X_A\boxtimes \Oo_{\Pb^m_A}(1)) \ar@/_/[rrdd]\ar[r] \ar[d]_{L\nu_0^*} & \Kb((T^*X_A \setminus S_A)\boxtimes \Oo_{\Pb^m_A}(1)) \ar[d]_{(\underline{\nu})^*} \ar[ddr]^{(\underline{\nu}')^*} & \\
\Kb(X_A) \ar[r] \ar[rrd] & \Kb(U_A) \ar@{=}[rd] & \\
& & \Kb(U_A).
}
\]
\end{corollary}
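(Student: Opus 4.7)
The plan is to extract all three diagrams of the statement from successive applications of Construction \ref{construction:bracket_nu} to enlarged admissible collections, using the coherent system of $\Pb^1$-homotopies produced in the inductive simplex-filling step.

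For the first (triangular) diagram, I would apply Construction \ref{construction:bracket_nu} to the concatenation $(U_i,\nu_i)_{i=1}^m \cup (U'_j,\nu'_j)_{j=1}^{m'}$, which by the second clause of Definition \ref{defi:homotopy}(c) is itself $S$-admissible relative to $A$. The resulting map out of the $(m+m')$-fold twisted cotangent bundle sits compatibly over both of the constituent maps, since the cartesian cubical diagram indexed by $\Pc(\{1,\dots,m+m'\})^{\op}$ contains as face sub-cubes the two cartesian cubes indexed by $\Pc(\{1,\dots,m\})^{\op}$ and $\Pc(\{m+1,\dots,m+m'\})^{\op}$. The universal cones over these sub-cubes recover $(\underline{\nu})^*$ and $(\underline{\nu}')^*$ respectively, and the two resulting $2$-simplices in $\Kb(U_A)$ glue to give the asserted homotopy.

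For the tetrahedron associated to a triple of collections, the same strategy applied to the triple union $(\underline{\nu}) \cup (\underline{\nu}') \cup (\underline{\nu}'')$ simultaneously produces the four compatible triangles as its $2$-faces. The existence and essential uniqueness of a $3$-simplex filling these faces is precisely what the inductive step of Construction \ref{const:homotopy} guarantees, via the contractibility of the Kan complex $\Psi^{-1}(i_*)$ arising from the $\infty$-categorical cofibre construction in stable $\infty$-categories. The last diagram, involving the zero section $\nu_0 = 0$, is more delicate since $0$ does not in general satisfy the $S$-admissibility condition. To handle this, I would invoke Construction \ref{construction:bracket_nu} with $S = \emptyset$, where admissibility is vacuous, applied to the extended tuple $(0,\nu_1,\dots,\nu_m,\nu'_1,\dots,\nu'_{m'})$, and then restrict the resulting coherent diagram along the open immersion $(T^*X_A \setminus S_A)\boxtimes\Oo_{\Pb^{\bullet}_A}(1) \hookrightarrow T^*X_A \boxtimes \Oo_{\Pb^{\bullet}_A}(1)$. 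This produces simultaneous compatibility of $L\nu_0^*$ with both $(\underline{\nu})^*$ and $(\underline{\nu}')^*$ consistent with the triangle already constructed.

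The main obstacle is essentially bookkeeping: carefully identifying which sub-cube of the enlarged cartesian cube produces which face of each asserted simplicial diagram, and arranging to obtain all three diagrams (together with their mutual compatibilities) from a single coherent invocation of Construction \ref{construction:bracket_nu}. All genuine higher-coherence content is already encoded in Construction \ref{const:homotopy} via the $\infty$-categorical set-up, so no new homotopy-theoretic input is required.
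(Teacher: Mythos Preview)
Your proposal is correct and follows essentially the same route as the paper's proof: apply Construction \ref{construction:bracket_nu} to the concatenated admissible collection $(U_i,\nu_i)\cup(U'_j,\nu'_j)$ for the triangle (and to the triple union for the tetrahedron), and then handle the zero-section diagram by adjoining $(X,\nu_0)$ and working with $S=\emptyset$. The paper's argument is stated in two sentences and omits the bookkeeping you have spelled out; your elaboration of how the sub-cubes of the enlarged cartesian cube recover $(\underline{\nu})^*$ and $(\underline{\nu}')^*$, and your explicit mention of restricting along the open immersion for the $\nu_0$-compatibility, are reasonable fleshings-out of what the paper leaves implicit.
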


\begin{proof}
The first assertion follows right from the inductive definition of the morphism $(\underline{\nu})$ of Construction \ref{construction:bracket_nu}, applied to $(U_i, \nu_i)_{i = 1}^m \cup (U'_i,\nu'_i)_{i = 1}^{m'}$. The second assertion follows by adding the zero section $(X,\nu_0)$ to $(U_i, \nu_i)_{i = 1}^m \cup (U'_i,\nu'_i)_{i = 1}^{m'}$ and considering the case $S = \emptyset$.
\end{proof}


\begin{definition}\label{defi:modulus}
Let $Y\to \Pb^m_k$ be a morphism of schemes, such that the closed immersion
$$Y \times_{\Pb^m_k} \Pb^{m-1}_k \to^i Y$$
has the property that the structure sheaf $i_*\Oo_{Y \times_{\Pb^m_k} \Pb^{m-1}_k}$ is a perfect $\Oo_Y$-module. We denote the cofibre of 
$$\Kb(Y \times_{\Pb^m_k} \Pb^{m-1}_k) \to^{i_*} \Kb(Y)$$
by $\Kb_{\infty}(Y)$. Similarly, for a closed subset $Y_0 \subset Y$ we write $\Kb_{\infty}(Y,Y_0)$ to denote the cofibre of
$\Kb(Y \times_{\Pb^m_k} \Pb^{m-1}_k,Y_0 \times_{\Pb^m_k} \Pb^{m-1}_k) \to^{i_*} \Kb(Y,Y_0)$.
\end{definition}

The second building block we need is a well-defined morphism of spectra 
\begin{equation}\label{eqn:no_A}
\widetilde{Q}_{S,A}\colon\Kb(D_X,S) \to \Kb_{\infty}((T^*X_A,S_A)\boxtimes \Oo_{\Pb^m_A}(1)).
\end{equation}
We refer the reader to Remark \ref{rmk:why_no_A} for an explanation why we don't expect the existence of such a map (fit for our purpose) from $\Kb(D_X \otimes_k A,S_A) \to \Kb((T^*X_A,S_A)\boxtimes \Oo_{\Pb^m_A}(1))$.

\begin{lemma-definition}\label{lemma-defi:tildeQ_S}
There exists a morphism of spectra $\widetilde{Q}_{S,A}$ as in \eqref{eqn:no_A}, such that the diagram of spectra
\[
\xymatrix{
\Kb(D_X,S) \ar@{..>}[r]^-{\exists\widetilde{Q}_{S,A}} \ar[d]_{Q} &  \Kb_{\infty}((T^*X_A,S_A)\boxtimes \Oo_{\Pb^m_A}(1)) \ar[d]^{L\nu_0^*} \\
\Kb(T^*X) \ar[r] & \Kb(T^*X_A)
}
\]
commutes.
\end{lemma-definition}

\begin{proof}
It suffices to construct the morphism $\widetilde{Q}_{S,k}$. Indeed, for a commutative $k$-algebra $A$ we can define 
$$\widetilde{Q}_{S,A} = \widetilde{Q}_{S,k} \otimes_k A,$$
where $\otimes_kA$ denotes the base change morphism
$$\Kb_{\infty}(Y) \to \Kb_{\infty}(Y_A)$$
for a $\Pb^m_k$-scheme $Y$.

Henceforth we assume $A = k$. 
 We have a cocartesian diagram of spectra (see \cite[Theorem 3.1.15]{MR2981817} and Lemma \ref{lemma:bifilt})
\[
\xymatrix{
\Kb(FFD_X,S) \ar[r] \ar[d] & \Kb(FD_X,S) \ar[d] \\
\Kb(FD_X,S) \ar[r] & \Kb(D_X,S).
}
\]
That is, $\Kb(D_X,S)$ is equivalent to the pushout $\Kb(FD_X,S) \sqcup_{\Kb(FFD_X,S)} \Kb(FD_X,S)$. For this reason it suffices to construct a commutative diagram
\[
\xymatrix{
\Kb(FFD_X,S) \ar[r] \ar[d] & \Kb(FD_X,S) \ar[d] \ar@/^10pt/[rdd] & \\
\Kb(FD_X,S) \ar[r] \ar@/_10pt/[rrd] & \Kb_{\infty}((T^*X,S)\boxtimes \Oo_{\Pb^m}(1)) \ar[rd]_-{L\nu_0^*} & \\
& & \Kb(T^*X)
}
\]
and deduce the assertion from the universal property of pushouts. We claim that the existence of such a commutative diagram follows from Proposition \ref{prop:compactification}. Indeed, \emph{loc. cit.} yields a commutative diagram
\[
\xymatrix{
\Kb(FD_X,S) \ar[r] \ar[d] & \Kb((T^*X,S) \boxtimes \Oo_{\Pb^m}(1)) \ar[d] \\
\Kb(D_X,S)  \ar[r] & \Kb(T^*X \times \Ab^m, S \times \Ab^m),
}
\]
which implies that we have a commutative diagram
\[
\xymatrix{
\Kb(FFD_X,S) \ar[r] \ar[d] & \Kb(FD_X) \ar[d] \ar@/^10pt/[rdd] & \\
\Kb(FD_X,S)  \ar[r] \ar@/_10pt/[rrd] & \Kb(T^*X \times \Ab^m, S \times \Ab^m) \ar[rd] & \\
& & \Kb(T^*X) 
}
\]
By virtue of localisation, the spectrum $ \Kb(T^*X \times \Ab^m, S \times \Ab^m)$ is equivalent to the cofibre of
$$\Kb((T^*X,S) \boxtimes \Oo_{\Pb^{m-1}}(1)) \to \Kb((T^*X,S) \boxtimes \Oo_{\Pb^m}(1)).$$
Since the scheme $T^*X \boxtimes \Oo_{\Pb^m}(1)$ is smooth, the cofibre above is isomorphic to 
$$\Kb_{\infty}((T^*X,S) \boxtimes \Oo_{\Pb^m}(1)).$$
This shows what we wanted.
\end{proof}

\begin{rmk}\label{rmk:why_no_A}
We don't expect there to be a morphism 
$$\Kb(D_{X_A/A},S_A) \to  \Kb(T^*X_A\boxtimes \Oo_{\Pb^m_A}(1),S_A \boxtimes \Oo_{\Pb^m_A}(1))$$
fitting into a commutative diagram as in Lemma-Definition \ref{lemma-defi:tildeQ_S}. This is comparable to the \emph{epsilon-niceness} condition of Beilinson--Bloch--Esnault \cite[4.4]{MR1988970} which is needed in order to define a de Rham epsilon line of an $A$-family of irregular flat connections. 
\end{rmk}

This enables us to finally define the de Rham epsilon factor with respect to the $m$-tuple $(\nu_1,\dots,\nu_m)$ which takes values in $\Kb(A)$, as well as the de Rham epsilon line.

\begin{definition}\label{defi:main_epsilon}
\begin{enumerate}
\item[(a)] The morphism 
$$\widetilde{\phi}_{\underline{\nu},A}\colon \Kb_{\infty}(T^*X_A \boxtimes \Oo_{\Pb^m}(1),S_A \boxtimes \Oo_{\Pb^m}(1)) \to \Kb(X_A,Z_A)$$
is defined to be the induced map of cofibres of the rows of the following commutative square
\[
\xymatrix{
\Kb_{\infty}(T^*X_A \boxtimes \Oo_{\Pb^m}(1)) \ar[r] \ar[d] & \Kb_{\infty}(T^*X_A \boxtimes \Oo_{\Pb^m}(1) \setminus S_A \boxtimes \Oo_{\Pb^m}(1)) \ar[d] \\
\Kb(X_A) \ar[r] & \Kb(X_A \setminus Z_A).
}
\]

\item[(b)] The de Rham epsilon factor is defined to be the composition
$$\Ec_{\underline{\nu},Z} = \widetilde{\phi}_{\underline{\nu}} \circ \widetilde{Q}_{S,A}\colon \Kb(D_X,S) \to \Kb(X_A,Z_A),$$
where $\widetilde{Q}_{S,A}$ is the morphism of Lemma-Definition \ref{lemma-defi:tildeQ_S}.

\item[(c)] If $Z \subset X$ is proper, we define the de Rham epsilon line 
$$\varepsilon_{\underline{\nu}} = \grdet(\tau_{\geq 0}(\pi_*\Ec_{\underline{\nu},Z}))\colon K(D_X,S) \to \grPic(A),$$
where $\tau_{\geq 0}$ denotes the truncation functor from spectra to connective spectra.
\end{enumerate}
\end{definition}

\begin{proposition}\label{prop:main_epsilon}
There is a commutative diagram in the homotopy category of spectra
\[
\xymatrix{
& \Kb(D_X,S) \ar[r]^{\Ec_{\underline{\nu},A}} \ar[ld]_{Q} \ar[d] & \Kb(X_A,Z_A) \ar[d] \\
\Kb(T^*X) \ar[r] & \Kb(T^*X_A) \ar[r]^{L\nu_0^*} & \Kb(X_A).
}
\]
\end{proposition}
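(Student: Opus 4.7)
The plan is to unpack the definition of $\Ec_{\underline{\nu},A}$ and reduce the proposition to splicing together two auxiliary commutative squares that are already in hand. By Definition \ref{defi:main_epsilon}(b), $\Ec_{\underline{\nu},A}=\widetilde{\phi}_{\underline{\nu},A}\circ \widetilde{Q}_{S,A}$, so I would treat these two factors separately before concatenating the resulting squares.

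First I would post-compose $\widetilde{\phi}_{\underline{\nu},A}$ with the natural forget-support map $\Kb(X_A,Z_A)\to \Kb(X_A)$. Because $\widetilde{\phi}_{\underline{\nu},A}$ is defined in \ref{defi:main_epsilon}(a) as the map of horizontal cofibres of a square whose left vertical is $L\nu_0^*$ and whose right vertical is $(\underline{\nu})^*$ from Construction \ref{construction:bracket_nu}, the naturality of the Thomason--Trobaugh localisation cofibre sequence supplies a commutative square
\[
\xymatrix{
\Kb_{\infty}((T^*X_A,S_A)\boxtimes \Oo_{\Pb^m_A}(1)) \ar[r]^-{\widetilde{\phi}_{\underline{\nu},A}} \ar[d] & \Kb(X_A,Z_A) \ar[d] \\
\Kb_{\infty}(T^*X_A\boxtimes \Oo_{\Pb^m_A}(1)) \ar[r]^-{L\nu_0^*} & \Kb(X_A)
}
\]
in the homotopy category, the left vertical being the forgetful map.

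Next I would pre-compose this with $\widetilde{Q}_{S,A}$ and invoke Lemma-Definition \ref{lemma-defi:tildeQ_S}. The lemma identifies the composite $\Kb(D_X,S)\to \Kb_{\infty}((T^*X_A,S_A)\boxtimes \Oo_{\Pb^m_A}(1))\to \Kb_{\infty}(T^*X_A\boxtimes \Oo_{\Pb^m_A}(1))$ with Quillen's $Q\colon \Kb(D_X,S)\to \Kb(T^*X)$ followed by base change to $A$ and the natural arrow $\Kb(T^*X_A)\to \Kb_{\infty}(T^*X_A\boxtimes \Oo_{\Pb^m_A}(1))$. Moreover, since the zero section $\nu_0\colon X_A\to T^*X_A$ lifts, via any trivialisation of $\Oo_{\Pb^m_A}(1)$ on the complement of the modulus divisor at $\infty$, to a section of $T^*X_A\boxtimes \Oo_{\Pb^m_A}(1)\to X_A$, the bottom $L\nu_0^*$ of the square above factors as the composite $\Kb_{\infty}(T^*X_A\boxtimes \Oo_{\Pb^m_A}(1))\to \Kb(T^*X_A)\xrightarrow{L\nu_0^*}\Kb(X_A)$.

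Stitching the two squares along their shared corner $\Kb_{\infty}(T^*X_A\boxtimes \Oo_{\Pb^m_A}(1))$ produces the desired commutative outer rectangle, while the left-hand triangle is just the tautological factorisation of the unlabelled vertical arrow $\Kb(D_X,S)\to \Kb(T^*X_A)$ as $Q$ followed by base change. The main point requiring care, and really the only non-formal step, is the reconciliation of the various incarnations of $L\nu_0^*$ appearing above -- the one on $\Kb_{\infty}(T^*X_A\boxtimes \Oo_{\Pb^m_A}(1))$ coming from the $\Pb^m$-homotopy-with-modulus construction of Construction \ref{construction:bracket_nu}, and the standard zero-section pullback on $\Kb(T^*X_A)$ -- but this compatibility follows from the observation above that $\nu_0$ avoids the divisor at $\infty$.
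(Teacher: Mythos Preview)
Your proposal is correct and follows essentially the same approach as the paper: unpack $\Ec_{\underline{\nu},A}=\widetilde{\phi}_{\underline{\nu},A}\circ\widetilde{Q}_{S,A}$, use that $\widetilde{\phi}_{\underline{\nu},A}$ is by definition the map on fibres of the square in Definition~\ref{defi:main_epsilon}(a) to obtain the compatibility with $L\nu_0^*$, and then invoke Lemma-Definition~\ref{lemma-defi:tildeQ_S} to link $\widetilde{Q}_{S,A}$ with Quillen's $Q$. The paper's proof records exactly these two ingredients and declares the result immediate; your version is if anything slightly more careful in spelling out the reconciliation of the two uses of $L\nu_0^*$.
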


\begin{proof}
This follows immediately from Definition \ref{defi:main_epsilon} of the epsilon factor, and the commutative diagram in the homotopy category of spectra
\[
\xymatrix{
\Kb_{\infty}((T^*X_A,S_A) \boxtimes \Oo_{\Pb^{m-1}}(1)) \ar[r] \ar[d]_{\widetilde{\phi}_{\underline{\nu},A}} & \Kb_{\infty}(T^*X_A \boxtimes \Oo_{\Pb^m}(1)) \ar[r] \ar[d] & \Kb_{\infty}(T^*X_A \boxtimes \Oo_{\Pb^m}(1) \setminus S_A \boxtimes \Oo_{\Pb^m}(1)) \ar[d] \\
\Kb(X_A,Z_A) \ar[r] & \Kb(X_A) \ar[r] & \Kb(X_A \setminus Z_A).
}
\]
which exists by virtue of the definition of $\widetilde{\phi}_{\underline{\nu},A}$ as the cofibre of the commuting square on the right (considered as a commutative diagram in the stable $\infty$-category of spectra).
\end{proof}

\begin{theorem}\label{thm:main_product}
Assume that $X$ is proper, then we have a commutative diagram
\[
\xymatrix{
\Kb(D_X,S) \ar[r]^{\Ec_{\underline{\nu},A}} \ar[d]_{R\Gamma_{dR}} & \Kb(X,A) \ar[d]^{R\Gamma} \\
\Kb(k) \ar[r]^{\otimes_k A} & \Kb(A),
}
\]
and consequently we have an isomorphism of functors
$$\grdet(R\Gamma_{dR} \otimes_k A)\simeq \varepsilon_{\underline{\nu},A}\colon K(D_X,S) \to \grPic(A).$$
\end{theorem}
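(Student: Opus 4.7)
The argument combines Proposition~\ref{prop:main_epsilon}, which describes what $\Ec_{\underline{\nu},A}$ does after forgetting the support to $\Kb(X_A)$, with Patel's product formula (Theorem~\ref{thm:patel}(b)), which handles the remaining absolute direct image.

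First I would observe that since $X$ is proper over $k$, $X_A$ is proper over $A$, and consequently $R\Gamma\colon \Kb(X_A,Z_A) \to \Kb(A)$ factors as the forget-support map $\Kb(X_A,Z_A) \to \Kb(X_A)$ followed by the absolute pushforward $R\Gamma_{X_A/A}\colon \Kb(X_A)\to\Kb(A)$. By Proposition~\ref{prop:main_epsilon}, the composition of $\Ec_{\underline{\nu},A}$ with this forget-support map equals $L\nu_0^* \circ (-\otimes_k A) \circ Q$, where $\nu_0\colon X\to T^*X$ is the zero section and $Q\colon \Kb(D_X)\to\Kb(T^*X)$ is Quillen's associated-graded morphism. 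Since $X/k$ is proper, proper flat base change (together with the obvious compatibility of $\nu_0$ with base change along $k\to A$) then yields
$$R\Gamma \circ \Ec_{\underline{\nu},A} \simeq (-\otimes_k A) \circ R\Gamma_{X/k} \circ L\nu_0^* \circ Q.$$

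It remains to identify $R\Gamma_{X/k} \circ L\nu_0^* \circ Q$ with $R\Gamma_{dR}\colon \Kb(D_X,S)\to\Kb(k)$. This is where Patel's product formula enters. Unpacking Patel's definition $\Ec_\nu^P = [-n] \circ \phi_\nu \circ Q_S$, the morphism $\phi_\nu$ is defined as the induced map on fibres of a commutative square whose left vertical arrow is $(\pi^*)^{-1}$, and on $\Kb(T^*X)$ this equals $L\nu_0^*$ (since $\pi\circ\nu_0=\id_X$ and $\pi^*$ is invertible by $\mathbb{A}^1$-invariance applied to the smooth variety $X$). Post-composing with the forget-support map $\Kb(X,D)\to\Kb(X)$, one reads off that $R\Gamma_X \circ \phi_\nu \circ Q_S$ coincides with $R\Gamma_{X/k} \circ L\nu_0^* \circ Q$. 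Patel's formula $R\Gamma \circ \Ec_\nu^P \simeq R\Gamma_{dR}$ therefore delivers the required identification (the shift $[-n]$ in Patel's construction being absorbed into the grading convention for $R\Gamma_{dR}$ adopted in the paper, cf.~Remark~2.12).

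The second assertion, an isomorphism of functors $\grdet(R\Gamma_{dR}\otimes_k A) \simeq \varepsilon_{\underline{\nu},A}$, then follows at once by post-composing the commutative square with the graded determinant functor $\grdet\colon \Kb(A)\to\grPic(A)$ and appealing to Definition~\ref{defi:main_epsilon}(c). The principal technical obstacle lies in the final identification: one must trace through the fibre-sequence definition of $\phi_\nu$ and verify carefully its compatibility with the forget-support functors, together with a verification that the shift and sign conventions match those implicit in the definition of $\widetilde{Q}_{S,A}$ and $\widetilde{\phi}_{\underline{\nu},A}$ used to build $\Ec_{\underline{\nu},A}$.
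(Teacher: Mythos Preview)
Your proposal is correct and follows essentially the same route as the paper: reduce via Proposition~\ref{prop:main_epsilon} and base change to identifying $R\Gamma_{X/k}\circ L\nu_0^*\circ Q$ with $R\Gamma_{dR}$. The only difference is that where you propose to extract this last identity by unpacking Patel's product formula $R\Gamma\circ\Ec_\nu^P\simeq R\Gamma_{dR}$ and tracking the shift $[-n]$, the paper simply cites the underlying statement directly (\cite[Lemma~3.22]{MR2981817}), which already asserts the commutativity of
\[
\xymatrix{
\Kb(D_X) \ar[r]^Q \ar[rrd]_{R\Gamma_{dR}} & \Kb(T^*X) \ar[r]^{L\nu_0^*} & \Kb(X) \ar[d]^{R\Gamma}\\
& & \Kb(k)
}
\]
without passing through $\phi_\nu$ or any support condition. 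This spares you the fibre-sequence and shift bookkeeping you flag as the ``principal technical obstacle''; otherwise the arguments coincide.
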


\begin{proof}
There is a commutative diagram 
\[
\xymatrix{
\Kb(D_X) \ar[r]^Q \ar[rrd]_{R\Gamma_{dR}} & \Kb(T^*X) \ar[r]^{L\nu_0^*} & \Kb(X) \ar[d]^{R\Gamma}  \\
& & \Kb(k)
}
\]
(see \cite[Lemma 3.22]{MR2981817}). Base change along $k \to A$ leads to a commutative diagram
\[
\xymatrix{
\Kb(D_X) \ar[r]^Q \ar[rd] & \Kb(T^*X_A) \ar[r]^-{L\nu_0^*} & \Kb(X_A) \ar[r] & \Kb(A)    \\
& \Kb(T^*X) \ar[u] \ar[r] & \Kb(X) \ar[u] \ar[r] & \Kb(k) \ar[u] &
}
\]
in the homotopy category of spectra. Taking into account the commutative diagram of Proposition \ref{prop:main_epsilon}, we conclude the proof.
\end{proof}

Recall the relation $(U_i,\nu_i)_{i=1}^m < (U'_i,\nu'_i)_{i=1}^{m'}$ introduced in Definition \ref{defi:homotopy}. As a consequence of Corollary \ref{cor:homotopy} we obtain the following results:

\begin{proposition}\label{prop:homotopy}
\begin{enumerate}
For every relation $(U_i,\nu_i)_{i=1}^m < (U'_i,\nu'_i)_{i=1}^{m'}$ in $\Adm_{S,A}(U)$ we have a commutative diagram
\item[(a)]
\[
\xymatrix{
\Kb((T^*X_A , S_A)\boxtimes \Oo_{\Pb^m_A}(1)) \ar[r]^-{\widetilde{\phi}_{\underline{\nu},A}} \ar[rd]_-{\widetilde{\phi}_{\underline{\nu}',A}} & \Kb(X_A,Z_A) \ar@{=}[d] \\
& \Kb(X_A,Z_A), 
}
\]
of spectra,
and, 
\item[(b)] for every $(U_i,\nu_i)_{i=1}^m < (U'_i,\nu'_i)_{i=1}^{m'} < (U''_i,\nu_i)_{i=1}^m$ we have a commutative diagram
\[
\xymatrix{
& & \Kb(X_A,Z_A) \ar@{=}[ld] \ar@{=}[ldd]\\
\Kb((T^*X_A , S_A)\boxtimes \Oo_{\Pb^m_A}(1)) \ar[rru]^-{\widetilde{\phi}_{\underline{\nu}'',A}} \ar[r]^-{\widetilde{\phi}_{\underline{\nu},A}} \ar[rd]_-{\widetilde{\phi}_{\underline{\nu}',A}} & \Kb(X_A,Z_A) \ar@{=}[d] & \\
& \Kb(X_A,Z_A), & 
}
\]
of spectra.
\item[(c)] Denoting by $\nu_0 = 0\colon X_A \to T^*X_A$ the zero section, the commuting diagram of (a) extends to a commutative diagram
\[
\xymatrix{
\Kb(T^*X_A\boxtimes \Oo_{\Pb^m_A}(1)) \ar[rdd] \ar[d]_{L\nu_0^*} & \Kb((T^*X_A , S_A)\boxtimes \Oo_{\Pb^m_A}(1)) \ar[l] \ar[d]_-{\widetilde{\phi}_{\underline{\nu},A}} \ar[ddr]^-{\widetilde{\phi}_{\underline{\nu}',A}} & \\
\Kb(X_A) & \Kb(X_A,Z_A)   \ar@{=}[rd] \ar[l] & \\
& \Kb(X_A) \ar[lu] & \Kb(X_A,Z_A).\ar[l]
}
\]
\item[(d)] If we assume in addition properness of $X$ then we have a commutative diagram, then we have a commutative diagram
\[
\xymatrix{
& & \Kb(X_A,Z_A) \ar[ldd]^{R\Gamma} \\
\Kb(D_X,S) \ar[rru]^-{\Ec_{\underline{\nu}',A}} \ar[r]_-{\Ec_{\underline{\nu},A}} \ar[d]_{R\Gamma_{dR}} & \Kb(X,A) \ar[d]^{R\Gamma} \ar@{=}[ru] \\
\Kb(k) \ar[r]^{\otimes_k A} & \Kb(A),
}
\]
\end{enumerate}
in particular we obtain a commutative diagram of natural isomorphism 
\[
\xymatrix{
& & \grPic(A) \\
K(D_X,S) \ar[r]^-{\varepsilon_{\underline{\nu},A}} \ar[rru]^{\varepsilon_{\underline{\nu}',A}} \ar[rd]_{\grdet(R\Gamma_{dR})} & \grPic(A) \ar@{=}[ru] \ar@{=}[d] & \\
& \grPic(A). \ar@{=}[uur] &
}
\]
\end{proposition}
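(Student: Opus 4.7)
The plan is to derive each of parts (a)--(d) by transporting the coherence data already established in Corollary \ref{cor:homotopy} along the cofibre construction that defines $\widetilde{\phi}_{\underline{\nu},A}$ in Definition \ref{defi:main_epsilon}(a), and then composing with Theorem \ref{thm:main_product} in part (d).

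The starting observation is that, by Definition \ref{defi:main_epsilon}(a) and the very construction of $\widetilde{\phi}_{\underline{\nu},A}$, the spectrum map $\widetilde{\phi}_{\underline{\nu},A}$ is obtained as the induced map on horizontal cofibres of the commutative square
\[
\xymatrix{
\Kb_{\infty}(T^*X_A \boxtimes \Oo_{\Pb^m}(1)) \ar[r] \ar[d]_{L\nu_0^*} & \Kb_{\infty}\bigl((T^*X_A \setminus S_A) \boxtimes \Oo_{\Pb^m}(1)\bigr) \ar[d]^{(\underline{\nu})^*} \\
\Kb(X_A) \ar[r] & \Kb(U_A),
}
\]
whose right-hand vertical arrow is the morphism $(\underline{\nu})^*$ produced in Construction \ref{construction:bracket_nu}. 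Thus specifying a homotopy between $\widetilde{\phi}_{\underline{\nu},A}$ and $\widetilde{\phi}_{\underline{\nu}',A}$ reduces to specifying a homotopy between the right-hand vertical arrows $(\underline{\nu})^*$ and $(\underline{\nu}')^*$, compatible with the left-hand data (which does not depend on the choice of the admissible collection). Since the horizontal cofibre of a commutative square of spectra is functorial in the stable $\infty$-category $\Spectra$, it suffices to lift the required coherences at the level of $(\underline{\nu})^*$.

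For part (a), such a homotopy $(\underline{\nu})^* \simeq (\underline{\nu}')^*$ (as well as its compatibility with $L\nu_0^*$ on the left) is precisely the content of the first commuting triangle in Corollary \ref{cor:homotopy}. For part (b), the tetrahedron of homotopies is again given by the tetrahedral coherence of Corollary \ref{cor:homotopy}; transporting it under the cofibre functor yields the required tetrahedron in the $\widetilde{\phi}$-diagram. Part (c) is obtained by enlarging the admissible collection with the extra datum $(X, \nu_0)$ (with $S = \emptyset$ for this component, as in the second diagram of Corollary \ref{cor:homotopy}), so that the resulting homotopy commutes with $L\nu_0^*\colon \Kb(T^*X_A \boxtimes \Oo_{\Pb^m_A}(1)) \to \Kb(X_A)$ in the desired way.

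For part (d) we glue the triangle of part (a) to the square produced by Theorem \ref{thm:main_product} for both $\underline{\nu}$ and $\underline{\nu}'$. Concretely, Theorem \ref{thm:main_product} supplies commuting squares
\[
\xymatrix{
\Kb(D_X,S) \ar[r]^-{\Ec_{\underline{\nu},A}} \ar[d]_{R\Gamma_{dR}} & \Kb(X_A,Z_A) \ar[d]^{R\Gamma} \\
\Kb(k) \ar[r]^{\otimes_k A} & \Kb(A)
}
\]
and the analogous square for $\underline{\nu}'$; glueing them along the triangle $\Ec_{\underline{\nu},A} \simeq \Ec_{\underline{\nu}',A}\colon \Kb(D_X,S) \to \Kb(X_A,Z_A)$ obtained from part (a) composed with $\widetilde{Q}_{S,A}$ yields the asserted diagram. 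Applying the symmetric monoidal functor $\grdet \circ \tau_{\geq 0}$ to this commuting diagram of spectra gives the commuting diagram of graded Picard groupoids displayed at the end of the proposition.

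The only potentially subtle point is the assertion that the coherence data produced in Corollary \ref{cor:homotopy} (ultimately coming from the contractibility of the Kan complex $\Psi^{-1}(i_*)$ invoked in Construction \ref{const:homotopy}) survive the passage to horizontal cofibres. But this is formal: horizontal cofibres define an $\infty$-functor $\Fun(\Delta^1 \times \Delta^1, \Spectra) \to \Fun(\Delta^1, \Spectra)$, and the whole cube produced by Corollary \ref{cor:homotopy} lies in its domain, so the $\infty$-functor produces the required simplex of homotopies in the codomain. No further calculation is needed.
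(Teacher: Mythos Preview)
Your proposal is correct and follows the same approach as the paper. The paper does not give an explicit proof of this proposition at all: it simply records it as ``a consequence of Corollary \ref{cor:homotopy}'', and your argument spells out precisely how that consequence is drawn---by functoriality of the (co)fibre construction defining $\widetilde{\phi}_{\underline{\nu},A}$ in Definition \ref{defi:main_epsilon}(a), together with Theorem \ref{thm:main_product} for part (d).
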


We infer from these statements the existence of the epsilon connection. This is the content of the subsequent section. Before turning to the construction of this canonical connection we explain how to deduce main theorem stated in the introduction from the results of this subsection.

\begin{proof}[Proof of Theorem \ref{thm:mainmain}]
The theorem follows from \ref{prop:main_epsilon} and the remark that $K(D_X,\underline{\nu}) \simeq \colim K(D_X,S)$ where $S \subset T^*X$ ranges over all subsets, such that $\underline{\nu}$ is admissible.
\end{proof}


\section{The epsilon connection}\label{epsilon}

In this short section we explain how $\Pb^1$-homotopies can be used to produce a connection which arises in the shape of a crystal.

For a presheaf $F\colon \Rings_k \to \Sets$, the presheaf $F^{dR}$ is given by the functor sending $A \mapsto F(A^{\red})$. One defines $\grPic(F)$ to be the colimit 
$$\grPic(F) = \varinjlim_{A \to F} \grPic(A).$$
There is a well-known correspondence between $\grPic(F^{dR})$ and crystals of graded lines on $F$. The map $F \to \Spec k$, and the fact $(\Spec k)^{dR} = \Spec k$ induces a natural functor $\grPic(k) \to \grPic(F^{dR})$. Crystals of graded lines in the essential image of this functor will be called \emph{constant}.
\begin{definition}
Let $X$, $Z$, $S$, $A$, $U$, $\{U_i\}_{i=1,\dots, m}$ be as in Situation \ref{situation:tuple2}. We denote by $\bfOmega\colon \Rings_k \to \Sets$ the functor sending a commutative $k$-algebra $A$ to the set of tuples $\underline{\nu}$ satisfying the condition of Situation \ref{situation:tuple2}.
\end{definition}

\begin{situation}\label{situation:connection}
Let $X$, $Z$, $S$, $A$, $U$, $\{U_i\}_{i=1,\dots, m}$ be as in Situation \ref{situation:tuple2}. For $j=1,2,3$, we denote by $\underline{\nu}^j = (\nu^j_1,\dots,\nu^j_m)$ an $m$-tuple of $1$-forms $\nu_i^j \in \Omega^1(U_i/A)$ satisfying the condition of Situation \ref{situation:tuple2} for each $j$. Furthermore, we assume that $\nu_i^1|_{A^{\red}} = \nu_i^2|_{A^{\red}} = \nu_i^3|_{A^{\red}}$.
\end{situation}

\begin{corollary}\label{cor:main_epsilon_connection}
Assume that $Z$ is proper over $k$. For $\underline{\nu}^1$ and $\underline{\nu}^1$ as in Situation \ref{situation:connection} there exists an isomorphism
$$\cri_{12}\colon \varepsilon_{\underline{\nu}^1,A} \simeq \varepsilon_{\underline{\nu}^2,A}.$$
Furthermore, for $\underline{\nu}^1$, $\underline{\nu}^2$ and $\underline{\nu}^3$ as in Situation \ref{situation:connection} we have a commutative diagram 
\[
\xymatrix{
\varepsilon_{\underline{\nu}^1,A} \ar[r]^{\cri_{12}} \ar[rd]_{\cri_{13}} & \varepsilon_{\underline{\nu}^2,A} \ar[d]^{\cri_{23}} \\
& \varepsilon_{\underline{\nu}^3,A}
}
\]
of isomorphism. Furthermore, if $X$ is proper, then these isomorphisms belong to a commutative diagram
\[
\xymatrix{
\varepsilon_{\underline{\nu}^1,A} \ar[r]^{\cri_{12}} \ar[d] \ar[rd]_{\cri_{13}} & \varepsilon_{\underline{\nu}^2,A} \ar[d]^{\cri_{23}} \\
\grdet(R\Gamma_{dR}) \otimes_k A \ar[r] & \varepsilon_{\underline{\nu}^3,A}.
}
\]
\end{corollary}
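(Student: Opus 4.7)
The plan is to deduce the corollary directly from Proposition \ref{prop:homotopy}, once one verifies that under the reduction hypothesis of Situation \ref{situation:connection}, any union of the tuples $\underline{\nu}^1,\underline{\nu}^2,\underline{\nu}^3$ is again $S$-admissible. This will reduce the entire statement to a bookkeeping argument with the homotopies furnished by that proposition.

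First I would verify the key admissibility lemma: the combined tuples $\underline{\nu}^r \cup \underline{\nu}^s$ (for $r,s \in \{1,2,3\}$) and $\underline{\nu}^1 \cup \underline{\nu}^2 \cup \underline{\nu}^3$ all belong to $\Adm_{S,A}(U)$. Admissibility asks that for each non-empty subset $I$ of the index set, the image of the affine combination $\nu_{\Delta}\colon U_I \times \Ab^{\Delta_I} \to T^*X_A$ is disjoint from $S_A$. Since $S_A$ is the base change of $S \subset T^*X$ along $\Spec A \to \Spec k$, this is a set-theoretic condition that can be tested on the reduction $A^{\red}$. On the reduction we have $\nu_i^r|_{A^{\red}} = \nu_i^s|_{A^{\red}}$ by hypothesis, so an affine combination $\sum_\alpha \lambda_\alpha \nu^{r_\alpha}_{i_\alpha}|_{A^{\red}}$ with $\sum_\alpha \lambda_\alpha = 1$ collapses (after grouping repeated indices) to an affine combination of the distinct $\nu_i|_{A^{\red}}$ with the same sum-to-one constraint, which is admissible by the initial admissibility of $\underline{\nu}^1 \in \Adm_{S,A}(U)$. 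Hence the intersection with $S_A$ is empty, establishing admissibility. In particular this yields the refinement relations $\underline{\nu}^r < \underline{\nu}^r \cup \underline{\nu}^s$ and $\underline{\nu}^r \cup \underline{\nu}^s < \underline{\nu}^1 \cup \underline{\nu}^2 \cup \underline{\nu}^3$ for all $r,s$.

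Second, to produce $\cri_{rs}\colon \varepsilon_{\underline{\nu}^r,A} \simeq \varepsilon_{\underline{\nu}^s,A}$, I would apply part (a) of Proposition \ref{prop:homotopy} to each of the relations $\underline{\nu}^r < \underline{\nu}^r \cup \underline{\nu}^s$ and $\underline{\nu}^s < \underline{\nu}^r \cup \underline{\nu}^s$ to get homotopies $\widetilde{\phi}_{\underline{\nu}^r,A} \simeq \widetilde{\phi}_{\underline{\nu}^r \cup \underline{\nu}^s,A} \simeq \widetilde{\phi}_{\underline{\nu}^s,A}$; composing and post-composing with $\widetilde{Q}_{S,A}$ and the graded determinant then gives $\cri_{rs}$. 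For the cocycle condition $\cri_{13} = \cri_{23} \circ \cri_{12}$, I would route all three isomorphisms through the common refinement $\underline{\nu}^1 \cup \underline{\nu}^2 \cup \underline{\nu}^3$ and invoke part (b) of Proposition \ref{prop:homotopy} applied to each chain $\underline{\nu}^r < \underline{\nu}^r \cup \underline{\nu}^s < \underline{\nu}^1 \cup \underline{\nu}^2 \cup \underline{\nu}^3$; the resulting tetrahedra force the three pairwise isomorphisms to satisfy the triangle identity. Finally, when $X$ is proper, the compatibility with $\grdet(R\Gamma_{dR}) \otimes_k A$ is obtained by combining Theorem \ref{thm:main_product} with part (d) of Proposition \ref{prop:homotopy} applied along the same chains.

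The main obstacle I foresee is the admissibility verification in Step 1, and more specifically the care required to see that the set-theoretic intersection test with $S_A$ really is insensitive to nilpotents in $A$, despite the fact that $\nu_\Delta$ takes values in a scheme that depends on $A$ in a non-reduced way. Once this point is granted the rest of the argument is purely formal, as the content of the corollary — namely that $\varepsilon_{\underline{\nu},A}$ factors through the de Rham functor $A \mapsto A^{\red}$ and hence defines a crystal of graded lines on $\bfOmega$ — is precisely packaged by Proposition \ref{prop:homotopy}.
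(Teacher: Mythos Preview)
Your proposal is correct and follows essentially the same strategy as the paper: verify that the relevant unions of tuples remain $S$-admissible by reducing modulo nilpotents (using $\nu_i^r|_{A^{\red}} = \nu_i^s|_{A^{\red}}$ to collapse affine combinations), and then invoke Proposition~\ref{prop:homotopy}. The only difference is a minor bookkeeping shortcut: since the three tuples share the \emph{same} open covering $\{U_i\}$, the refinement condition in Definition~\ref{defi:homotopy}(c) is automatic, and the paper asserts $\underline{\nu}^1 < \underline{\nu}^2$ (and likewise the chain $\underline{\nu}^1 < \underline{\nu}^2 < \underline{\nu}^3$) directly, without passing through the auxiliary unions $\underline{\nu}^r \cup \underline{\nu}^s$ you introduce; this lets Proposition~\ref{prop:homotopy}(a),(b),(d) apply in one step rather than along a zig-zag.
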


\begin{proof}
We claim that for $\underline{\nu}^1$ and $\underline{\nu}^2$ as in Situation \ref{situation:connection} we have 
$$(U_i,\nu_i^1)_{i=1}^m < (U_i,\nu_i^2)_{i=1}^m$$
in $\Adm_{S,A}(U)$. The assertion the follows from Proposition \ref{prop:homotopy}.
In order to verify the claim it suffices to check that the union $(U_i,\nu_i^1)_{i=1}^m \cup (U_i,\nu_i^2)_{i=1}^m$ is still $S$-admissible. Let  $\lambda_1,\cdots,\lambda_{2m}$, such that we have a relation
$$\sum_{i=1}^m\lambda_i \nu_i^1 + \sum_{i=1}^m \lambda_{m+i} \nu_i^2 \cap S \neq \emptyset.$$
We apply the homomorphism $A \to A^{\red}$ (recall $\nu_i^1|_{A^{\red}} = \nu_i^2|_{A^{\red}}$) and obtain
$$\sum_{i=1}^m (\lambda_i + \lambda_{m+i}) \nu_i^1|_{A^{\red}} \cap S \neq \emptyset.$$
This contradicts the assumption of $S$-admissibility of $(U_i,\nu_i)_{i=1}^m$.

Consequently we have an isomorphism of graded lines $\varepsilon_{\underline{\nu}_1,A} \simeq \varepsilon_{\underline{\nu}_2,A}$, satisfying the crystalline cocycle condition (and hence $\varepsilon^{\cri}$ is a well-defined crystal on $\bfOmega$). For $X$ a proper scheme, we have an isomorphism of the crystal
$\varepsilon^{\cri}$ with the constant crystal $\grdet(R\Gamma_{dR}^*)_{\bfOmega}$ on $\bfOmega$ as a consequence of Proposition \ref{prop:homotopy}. 
\end{proof}

In light of our remarks on crystals, the theorem above yields the following corollary.

\begin{corollary}\label{cor:main_epsilon_connection2}
The morphism $\varepsilon\colon K(D_X,S) \to \grPic(\bfOmega_{X,S})$ factors through $\grPic(\bfOmega_{X,S}^{dR})$. That is, defines a crystal of graded lines on $\bfOmega_{X,S}$. We denote the induced map by $\varepsilon^{\cri}\colon K(D_X,S) \to \grPic(\bfOmega_{X,S})$.
Furthermore, if $X$ is proper, then $\varepsilon^{\cri} \simeq \grdet(R\Gamma_{dR}) \otimes_k A$ with the constant crystal structure.
\end{corollary}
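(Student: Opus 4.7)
The plan is to translate Corollary \ref{cor:main_epsilon_connection} into the language of crystals using the functor-of-points description of a crystal of graded lines on a presheaf. By the correspondence between such crystals on $\bfOmega_{X,S}$ and $\grPic$-valued functors on $\bfOmega_{X,S}^{dR}$ reviewed at the start of the section, giving a factorisation of $\varepsilon$ through $\grPic(\bfOmega_{X,S}^{dR})$ is tantamount to producing, for every commutative $k$-algebra $A$ and every pair $\underline{\nu}^1, \underline{\nu}^2 \in \bfOmega_{X,S}(A)$ satisfying $\underline{\nu}^1|_{A^{\red}} = \underline{\nu}^2|_{A^{\red}}$, an isomorphism $\cri_{12}\colon \varepsilon_{\underline{\nu}^1,A} \simeq \varepsilon_{\underline{\nu}^2,A}$, such that this assignment is functorial in $A$ and satisfies the crystalline cocycle condition on triples.

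The first step is to observe that Corollary \ref{cor:main_epsilon_connection} provides exactly this data out of the box: the existence of $\cri_{12}$ is the first assertion of \emph{loc. cit.}, and the cocycle identity $\cri_{13} = \cri_{23} \circ \cri_{12}$ is the content of its second commutative triangle. Functoriality in $A$ is inherited from the functoriality of each ingredient entering the definition of $\varepsilon_{\underline{\nu},A}$ (namely $\widetilde{Q}_{S,A}$ and $\widetilde{\phi}_{\underline{\nu},A}$), all of which were constructed compatibly with base change $A \to A'$. Unwinding the colimit defining $\grPic(\bfOmega_{X,S}^{dR})$ yields the desired factorisation $\varepsilon^{\cri}\colon K(D_X,S) \to \grPic(\bfOmega_{X,S}^{dR})$, which is by definition a crystal of graded lines on $\bfOmega_{X,S}$.

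For the second assertion, suppose $X$ is proper. The third and fourth commutative diagrams of Corollary \ref{cor:main_epsilon_connection} show that each $\cri_{12}$ intertwines the two isomorphisms $\varepsilon_{\underline{\nu}^i,A} \simeq \grdet(R\Gamma_{dR}(X,M)) \otimes_k A$ provided by Theorem \ref{thm:main_product}. In other words, the collection of these identifications assembles into an isomorphism of crystals from $\varepsilon^{\cri}$ to the constant crystal on $\bfOmega_{X,S}$ with fibre $\grdet(R\Gamma_{dR}(X,M)) \otimes_k A$.

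The main obstacle is purely bookkeeping: one must verify that the colimit description of $\grPic(\bfOmega_{X,S}^{dR})$ is genuinely captured by descent data consisting of the isomorphisms $\cri_{12}$ together with the cocycle triangle, and no higher coherences. Since the target $\grPic$ is a Picard groupoid (a $1$-truncated space), triangular coherence is all that is required, so no further work is needed beyond what Corollary \ref{cor:main_epsilon_connection} has already supplied.
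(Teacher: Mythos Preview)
Your proposal is correct and follows the same route as the paper: the paper deduces this corollary directly from Corollary \ref{cor:main_epsilon_connection} together with the remarks on crystals at the beginning of the section, and your write-up simply unpacks that deduction (the existence of the $\cri_{ij}$, the cocycle triangle, and in the proper case the compatibility with $\grdet(R\Gamma_{dR})$). One small inaccuracy: Corollary \ref{cor:main_epsilon_connection} contains only one commutative diagram for the proper case, not two, so your reference to ``third and fourth'' diagrams should be adjusted.
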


We don't know if for $X$ a curve, our epsilon connection is the same as the epsilon connection constructed by Beilinson--Bloch--Esnault in \cite{MR1988970}.

\appendix

\section{Notation}

\begin{tabular}{p{1.75cm}p{12cm}}
$k$ & field of characteristic $0$\\
$A$ & commutative $k$-algebra (with unit)\\
$X$ & smooth $k$-scheme, also the case of a trait, that is, $\Spec k'[[t]]$ is allowed, where $k'/k$ is a finite field extension\\
$U$ & open subset of $X$\\
$Z$ & closed complement of $U$ (reduced) \\
$\{U_i\}_{i=1}^m$ & open covering of $U$ \\
$X_A$, $U_A$ & base change $X \times_k A$, respectively $U \times _k A$\\
$\nu$ & algebraic $1$-form defined on $U$, or a section of $\Omega^1(U_A/A)$ \\
$\underline{\nu} = (\nu_i)_{i=1}^m$ & sections of $\Omega^1_{U_A/A}((U_i)_A)$ \\ 
$\grPic(A)$ & Picard groupoid of graded invertible $A$-modules (or, graded lines) \\
$\grdet$ & graded determinant of a vector space, or projective $A$-module: $\grdet V = (\rk V, \det V)$ \\
$\varepsilon^{BBE}_{\nu}$ & graded epsilon line as defined in \cite{MR1988970}, see also \ref{BBE}\\
$K$ & connective algebraic $K$-theory spectrum of an exact category, a stable $\infty$-category, or a scheme \\
$\Kb$ & non-connective algebraic $K$-theory spectrum of an exact category, a stable $\infty$-category, or a scheme \\
$\Kb_{\infty}$ & see Definition \ref{defi:modulus} \\
$K(D_X)$ & algebraic $K$-theory of locally finitely presented right $D$-modules \\
$\Ec^{P}_{\nu}$ & Patel's epsilon factor taking values in $K(X,D)$, as defined in \cite{MR2981817}, see also \ref{Patel}\\
$\varepsilon^{P}_{\nu}$ & graded epsilon line as defined in \cite{MR2981817}, see also \ref{Patel}\\
$\phi_{\nu}$ & see \ref{Patel} \\
$\Ec_{\underline{\nu}}$ & epsilon factor taking values in $\Kb(X_A,Z_A)$, see Definition \ref{defi:main_epsilon} \\
$\varepsilon_{\underline{\nu}}$ & graded epsilon line, see Definition \ref{defi:main_epsilon}(c) \\
$\tilde{\phi}_{\underline{\nu},A}$ & see Definition \ref{defi:main_epsilon}(a) \\ 
$(\underline{\nu}^*)$ & see Construction \ref{construction:bracket_nu} \\
$\Grpd$ & $\infty$-category of small $\infty$-groupoids, or Kan complexes, or spaces\\
$\Gpd$ & $2$-category of groupoids \\
$\simeq$ & denotes an equivalence of two objects in an $\infty$-category, including the special case of an equivalence of two small $\infty$-categories, or a homotopy between paths, etc. \\
\end{tabular}

\bibliographystyle{amsalpha}
\bibliography{epsilon.bib}

\bigskip
\noindent E-mail: \url{m.groechenig@fu-berlin.de}\\
Address: FU Berlin, Arnimallee 3, 14195 Berlin
\end{document}